\pdfoutput=1
\documentclass[a4paper,11pt]{amsart}

\usepackage{latexsym, amssymb,amsthm,amsmath,color}
\usepackage{xspace}
\usepackage{mathtools}
\usepackage[all]{xy}
\usepackage[abs]{overpic}
\usepackage{hyperref}
\newcommand{\cc}{\ensuremath{\mathbb{C}}\xspace}
\newcommand{\nn}{\ensuremath{\mathbb{N}}\xspace}
\newcommand{\rr}{\ensuremath{\mathbb{R}}\xspace}

\newcommand{\abs}[1]{\left| #1 \right|}

\newcommand{\set}[1]{\lbrace #1 \rbrace}
\newcommand{\comp}{\hspace{0.1 mm} \circ \hspace{0.1 mm}}
\newcommand{\rest}[2]{\left.{#1}\right|_{#2}}
\renewcommand{\bar}{\overline}
\renewcommand{\tilde}{\widetilde}

\newcommand{\inp}[1]{\left\langle #1 \right\rangle}
\newcommand{\pd}[2]{\frac{\partial #1}{\partial #2}}
\DeclareMathOperator{\Res}{Res}

\newcommand{\oo}{\mathcal{O}}

\newcommand{\elli}{\mathcal{A}_{\abs{D}}}

\DeclareMathOperator{\Hess}{Hess}

\DeclarePairedDelimiter\floor{\lfloor}{\rfloor}

\renewcommand{\phi}{\varphi}
%
\setlength{\oddsidemargin}{0.0 cm}
\setlength{\evensidemargin}{0.0 cm}
\setlength{\topmargin}{0.0 cm}
\setlength{\footskip}{1.0cm}
\setlength{\parindent}{17pt}
\setlength{\textwidth}{16 cm}
\setlength{\textheight}{21.5 cm}
\linespread{1.2}

\usepackage{thmtools}
\declaretheorem[style=definition,qed=$\diamondsuit$]{definition}
\declaretheorem[style=definition,qed=$\triangle$,sibling=definition]{example}
\declaretheorem[style=plain,sibling=definition]{theorem}
\declaretheorem[style=plain,sibling=definition]{lemma}
\declaretheorem[style=plain,sibling=definition]{proposition}
\declaretheorem[style=plain,sibling=definition]{corollary}
\declaretheorem[style=definition,qed=$\diamondsuit$,sibling=example]{claim}
\declaretheorem[style=definition,qed=$\diamondsuit$,sibling=claim]{remark}

\newtheorem*{claim*}{Claim}
\numberwithin{theoremalpha}{section}

\numberwithin{equation}{section}
\numberwithin{definition}{section}
\numberwithin{theorem}{section}
\numberwithin{proposition}{section}
\numberwithin{lemma}{section}
\numberwithin{example}{section}
\numberwithin{remark}{section}
\numberwithin{corollary}{section}
\newcommand{\Ad}{\mathcal{A}_{\abs{D}}}
\newcommand{\cplx}{\mathcal{A}_D}

\definecolor{ao(english)}{rgb}{0.0, 0.5, 0.0}

\newcommand{\gcss}{generalized complex structures}
\newcommand{\gcs}{generalized complex structure}
\newtheoremstyle{named}{}{}{\itshape}{}{\bfseries}{.}{.5em}{\thmnote{#3}#1}
\theoremstyle{named}
\newtheorem*{namedtheorem}{}
%


\setcounter{tocdepth}{2}

\begin{document}
\title{Self-crossing stable generalized complex structures}
\author{Gil R. Cavalcanti}
\address{Department of Mathematics, Utrecht University, 3508 TA Utrecht, The Netherlands}
\email{g.r.cavalcanti@uu.nl}
\author{Ralph L.\ Klaasse}
\address{D\'epartement de Math\'ematique, Universit\'e libre de Bruxelles, Brussels 1050, Belgium}
\email{r.l.klaasse@gmail.com}
\author{Aldo Witte}
\address{Department of Mathematics, Utrecht University, 3508 TA Utrecht, The Netherlands}
\email{g.a.witte@uu.nl}
\begin{abstract} 
We extend the notion of (smooth) stable generalized complex structures to allow for an anticanonical section with normal self-crossing singularities. This weakening not only allows for a number of natural examples in higher dimensions but also sheds some light into the smooth case in dimension four. We show that in four dimensions there is a natural connected sum operation for these structures as well as a smoothing operation which changes a self-crossing stable generalized complex structure into a smooth stable generalized complex structure on the same manifold. This allows us to construct large families of stable generalized complex manifolds.
\end{abstract}
\maketitle
\tableofcontents

\section{Introduction}

Generalized complex structures, introduced by Hitchin \cite{H03} and Gualtieri \cite{Gua07}, are a simultaneous generalization of complex and symplectic structures. In fact, infinitesimally a generalized complex structure is equivalent to the product of a complex and a symplectic vector space. The number of complex directions a generalized complex structure has at a point is the {\it type} of the structure, which is an upper semi-continuous function. Points where the type vanishes form the {\it symplectic locus} while points of maximal type are of {\it complex type}. At a region where the type is constant and equal to, say, $k$, the structure is locally the product of $\mathbb{C}^k$ with its complex structure and $\mathbb{R}^{2(n-k)}$ with the standard symplectic structure \cite{Gua07}. Yet one of the striking features of generalized complex structures is that the type does not have to be locally constant, and complex and symplectic points may coexist in a connected manifold.

The anticanonical bundle of every generalized complex manifold comes equipped with a natural section which is nonzero precisely at the symplectic locus. The simplest type-changing phenomenon in generalized complex geometry happens in {\it smooth stable generalized complex structures} \cite{CG17,Goto16}. For those, the anticanonical section is transverse to the zero section. Despite of only displaying the simplest type-change behaviour, smooth stable generalized complex manifolds have a rich geometry and there are several interesting examples, especially in four-dimensions. Known examples on four-manifolds include that
\begin{enumerate}
\item $n\# \mathbb{C} P^2 \# m\overline{\mathbb{C} P^2}$ admits a smooth stable generalized complex structure if and only if it admits an almost-complex structure, that is if and only if $n$ is odd \cite{CG09};
\item $n\# \mathbb{C} P^2 \# m\overline{\mathbb{C} P^2} \#(S^1\times S^3)$ admits a smooth stable generalized complex structure if and only if it admits an almost-complex structure, that is if and only if $n$ is even \cite{Tor12};
\item $n\# (S^2 \times S^2)$ admits a smooth stable generalized complex structure if and only if it admits an almost-complex structure, that is if and only if $n$ is odd \cite{Tor12};
\item $n\# (S^2 \times S^2) \#(S^1\times S^3)$ admits a smooth stable generalized complex structure if and only if it admits an almost-complex structure, that is if and only if $n$ is even \cite{Tor12};
\item Elliptic surfaces admit non symplectic, smooth stable generalized complex structures \cite{GH16}.
\end{enumerate}
The families (1)~to (4)~only admit symplectic structures if $n = 1$ and only admit complex structures if $n=1$, or, in families (2)~and (4), if $n=0$.

Besides the existence of many examples, the geometry of these four-dimensional generalized complex structures is very reminiscent of  symplectic Landau--Ginzburg models from physics \cite{2000hep.th.5247H}. Precisely, our examples present themselves in a way that seems to be closely related to the setup used by Seidel in his study of the Fukaya category on symplectic manifolds with Lefschetz fibrations \cite{Sei08} and later used by  Auroux--Katzarkov--Orlov \cite{AKO06} in the context of mirror symmetry of Del Pezzo surfaces.

Yet smooth stable generalized complex structures have a clear shortcoming: no Poisson Fano manifold is smooth stable generalized complex in real dimension greater than four \cite{GP13}. Another quirk of the theory is that even though families (1)~to (4)~are all connected sums, until now that seemed to happen nearly by accident. Those manifolds were obtained performing surgeries on minimal surfaces followed by a computation to determine their diffeomorphism type. The obvious question here is whether there is a connected sum construction within the class of smooth stable generalized complex structures.

The present paper aims to tackle both of these issues. We do so by considering generalized complex structures whose anticanonical section is transverse to zero with self-crossings, hence are more singular than the original smooth stable case. Allowing for self-crossings is a natural weakening of the stability condition. In algebraic geometry one often makes no distinction between holomorphic sections transverse to zero and sections transverse to zero with self-crossings. This is also completely analogous to the move, in real Poisson geometry, from log-symplectic to log-symplectic with normal self-crossings \cite{GLPR17,MS18}.

We call these new structures just ``stable", without the adjective ``smooth'' to indicate that the zero locus of the anticanonical section is no longer a smooth embedded submanifold. An immediate consequence of the definition is that $\mathbb{C} P^{2n}$ is a stable generalized complex manifold and hence, above dimension four, this is a genuine weakening of the original stable condition.

This change in the singular behaviour of the anticanonical section is small enough that much of the theory developed in \cite{CG17} for smooth stable structures has a direct extension: we define complex logarithmic tangent bundle and extend the definition of elliptic tangent bundle. The existence of a stable generalized complex structure is equivalent to the existence of a symplectic structure on the elliptic tangent bundle satisfying certain cohomological conditions. This fact allows us to study stable generalized complex structures using symplectic techniques. 

Armed with symplectic techniques, we ask whether our results have any bearing on smooth stable \gcss. Dimension four turns out to be special:

\begin{namedtheorem}[Theorem \ref{th:smoothing}]
Any four-dimensional stable generalized complex structure can be deformed into a smooth stable \gcs.
\end{namedtheorem}

The study of symplectic structures on the elliptic tangent bundle which fail to satisfy only some of the conditions needed to produce a stable generalized complex structure turns out to be a fruitful detour. Indeed, we show that $\mathbb{C} P^2$, $\overline{\mathbb{C} P^2}$ and $S^4$ all admit symplectic structures which fail to be of generalized complex type at $0$ or $2$ ($\mathbb{C} P^2$), $1$ or $3$ ($\overline{\mathbb{C} P^2}$) and 1 ($S^4$) points. The existence of this structure in $S^4$ has a particularly remarkable consequence, namely, that we can develop a connected sum operation for these manifolds (c.f.~Theorem \ref{thm:ellipticglueing}) and by keeping track of the number of ``problem" points we can also determine when the resulting structure is stable generalized complex. The outcome is that we can extend the families (1)~to (4)~above and prove directly that:

\begin{namedtheorem}[Theorem \ref{th:examples}]
The manifolds in the following two families admit stable generalized complex structures:
\begin{enumerate}
\item  $\# n (S^2\times S^2)\# \ell (S^1\times S^3)$, with $n,\ell \in \nn$;
\item $\# n \mathbb{C} P^2 \#m \overline{\mathbb{C} P^2}\# \ell(S^1 \times S^3)$, with $n,m,\ell \in \nn$,
\end{enumerate}
as long as $1 - b_1 + b_2^+$ is even and the Euler characteristic is non-negative.
\end{namedtheorem}

Notice that if $1 - b_1 + b_2^+$ is odd for a four-manifold $M$, then $M$ does not admit any generalized complex structure as it is not even almost complex by \cite{HH58} or \cite[Theorem 1.4.13]{GS99}. The requirement that the Euler characteristic is positive, on the other hand, seems to be more of a limitation of our methods.

\vskip6pt
\noindent
{\bf Organisation of the paper.}
This paper is organised as follows. In Section~\ref{sec:divisors} we introduce self-crossing complex and elliptic divisors, the basic geometric objects that allow us to develop the theory of stable generalized complex structures. In Section~\ref{sec:algebroids} we will introduce the Lie algebroids induced by these divisors, which are the spaces where stable generalized complex structures become the more amenable symplectic structures. In Section \ref{sec:generalizedcomplex} we introduce self-crossing stable generalized complex structures and show that they are equivalent to a certain class of elliptic symplectic structures. In Section~\ref{sec:4d} we focus on four-dimensional structures. Here we prove a normal form theorem for self-intersection points in the divisor and show that a stable structure can be deformed into a smooth one (Theorem~\ref{th:smoothing}). In Section~\ref{sec:connectedsum} we show that one can perform connected sums of stable generalized complex structures (Theorem~\ref{thm:ellipticglueing}) and in Section~\ref{sec:examples} we provide concrete examples obtained via connected sum and prove Theorem~\ref{th:examples}.

\vskip6pt
\noindent
{\bf Acknowledgements.}
We thank Eduard Looijenga for useful conversations regarding complex log divisors and Ornea and Vuletescu for pointing us towards \cite{MR1760667} for the part of the argument in Remark \ref{rem:not complex}. RK was supported by ERC consolidator grant 646649 ``SymplecticEinstein''. AW was supported by the NWO through the Utrecht Geometry Centre Graduate Programme.

\section{Self-crossing divisors}\label{sec:divisors}
This section covers the basic definitions and properties of the singularities we will encounter. We start by recalling the definition of a divisor, before introducing the complex log divisors we are mostly interested in. After discussing their zero sets in some detail, we turn to elliptic divisors. These can be induced from complex log divisors, and will play a large role throughout the paper. Finally we describe the relation between complex log and elliptic divisors in full detail. 
\subsection{Divisors}
In this section we define the objects which will govern the singularities of the geometric structures that are to come. The use of divisors in algebraic geometry is extremely common, but our terminology differs slightly:
\begin{definition}
A \textbf{complex divisor} on a manifold $M$ is a pair $(L,\sigma)$ where $L \rightarrow M$ is a complex line bundle and $\sigma \in \Gamma(L)$ is a section with nowhere dense zero set.
\end{definition}
Given a divisor, we can view the section as a map $\sigma\colon \Gamma(L^*) \rightarrow C^{\infty}(M;\cc)$ and obtain a complex ideal $I_{\sigma} := \sigma(\Gamma(L^*))$. Divisors admit products, which are obtained by performing tensor products. For clarity, we have that $(L,\sigma) \otimes (L',\sigma') = (L\otimes L', \sigma\otimes\sigma')$ and hence $I_{\sigma\otimes\sigma'} = I_\sigma \cdot I_{\sigma'}$.
\begin{definition}
Let $(M,(L,\sigma))$ and $(N,(L',\sigma'))$ be manifolds with divisors. A smooth map $\varphi\colon M \rightarrow N$ is a \textbf{morphism of divisors} if $\varphi^*I_{\sigma'} = I_{\sigma}$, where the left-hand side denotes the ideal generated by all pullbacks, and is an \textbf{isomorphism} if $M = N$ and $\varphi = \text{id}_M$.
\end{definition}
The ideal defined by the section actually completely captures the divisor up to isomorphism, so that we will often use divisors and their corresponding ideals interchangeably. Moreover, we will sometimes denote a product of ideals by $I \otimes I'$ instead of $I \cdot I'$. We will mainly be interested in a specific class of divisors:
\begin{definition}
A complex divisor $(L,\sigma)$ on a manifold $M$ is a \textbf{smooth complex log divisor} if $\sigma$ vanishes transversely. Its \textbf{vanishing locus} is denoted by $D := \sigma^{-1}(\set{0})$.
\end{definition}
By transversality, the vanishing locus of a smooth complex log divisor is an embedded submanifold of codimension two. To proceed further we will need the following notion.
\begin{definition}\label{def:1ideal}
A collection of $I_1,\ldots,I_j \subset \Omega^0(M;\mathbb{C})$ of locally principal ideals is \textbf{functionally independent} if there exist local generators $f_i$ of $I_i$ which are functionally independent at their common zero set. That is, for all multi-indices  $(i_1,\ldots,i_k)$ with length smaller or equal to $j$ we have
\begin{equation*}
d_pf_{i_1} \wedge d _p\bar{f_{i_1}}\wedge \cdots \wedge d_pf_{i_k} \wedge  d _p\bar{f_{i_k}} \neq 0, \qquad \text{for all } p \in \cap_{l=1}^k f_{i_l}^{-1}(\set{0}).\qedhere
\end{equation*}
\end{definition}
\begin{definition}\label{def:1}
A \textbf{self-crossing complex log divisor} is a complex divisor $(L,\sigma)$, such that for every point $p \in M$, there exists a neighbourhood $U$ of $p$ such that
\begin{equation*}
I_{\sigma}(U) = I_1\cdot \ldots \cdot I_j,
\end{equation*}
where $I_1,\ldots,I_j$ are functionally independent smooth complex log divisors on $U$.
\end{definition}
\begin{remark}[Terminology]\label{rem:term}
Our definition of a smooth complex log divisor appears in \cite{CG17} without the prefix smooth attached. For brevity, we will often write ``complex log divisor'', which has to be understood to possibly have self-crossings. Whenever we deal with a smooth complex log divisor we will explicitly stress this.
\end{remark}
\begin{definition}
Let $I_D$ be a complex log divisor. For a given point $p \in M$, and a neighbourhood $U$ of $p$ let $n_U$ be the number $j$ as in Definition \ref{def:1}. The \textbf{intersection number of $p$} is the minimum of $n_U$ taken over all neighbourhoods of $p$. The \textbf{intersection number of the divisor} is the maximum of the intersection numbers of its points.
\end{definition}
\begin{example}\label{ex:mainexample} Let $\{I_{D_i}\}$ for $i = 1,\ldots, n$ be a collection of functionally independent smooth complex log divisors. Then their product $I_D := \otimes_{i=1}^nI_{D_i}$ defines a complex log divisor with intersection number $n$. We call such a divisor a \textbf{global normal crossing divisor.}
\end{example}
By definition every complex log divisor is locally of this form, which will often be used.
\begin{definition}
Given a complex log divisor $I_D$, we call a choice of local smooth complex divisors near a point as in Example \ref{ex:mainexample} a \textbf{local normal crossing}. 
\end{definition}
Note that up to the germ of a local isomorphism of divisors, the only choice in a local normal crossing for a given complex log divisor is the ordering of the smooth divisors.
\begin{example}\label{ex:complexlogcpn}
Let $\mathcal{O}(k)$ be the holomorphic line bundle on $\mathbb{C}P^n$ obtained as the $k$-fold tensor product of the dual of the tautological line bundle. Recall that sections of $\oo(k)$ can be identified with homogeneous polynomials of degree $k$ in $n+1$ variables. Under this identification we can view the polynomial $p := z_0\cdot \ldots \cdot z_n$ as a section of $\oo(n+1)$. We conclude that $(\mathcal{O}(n+1), p)$ defines a complex log divisor with intersection number $n$ on $\cc P^n$.
\end{example}
\begin{example}\label{ex:linebundledivisor}
Let $E \rightarrow M$ be a complex line bundle. Then $\Gamma((E^{1,0})^*) \subset C^{\infty}(E)$ generates an ideal $I$ on $E$. Locally, if $U \subset M$ is an open neighbourhood on which $\rest{E}{U}$ is trivialised, there exists a corresponding fibre coordinate $z$ on $U$ which generates $I$. We conclude that every complex line bundle carries a canonical smooth complex log divisor whose vanishing locus is the zero section $M \subset E$.
\end{example}
\begin{example}\label{rem:locform}
Let $(z_1,\ldots, z_j,x_{2j+1},\ldots,x_{2j+m})$ be coordinates on $\cc^j \times \rr^m$ and define smooth complex log divisors $I_{D_i} := \langle z_i \rangle$. Then the ideal $I_D = \otimes_{i=1}^jI_{D_i}$ is called the \textbf{standard complex log divisor with intersection number $j$ on $\cc^j \times \rr^m$}.
\end{example}
\begin{lemma}\label{lem:loccomplexlogiso}
If $I_D$ is a complex log divisor and $p \in M$ is a point of intersection number $j$, then $I_D$ is locally isomorphic to the standard complex log divisor around $p$.
\end{lemma}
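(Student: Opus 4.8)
The plan is to construct an explicit straightening diffeomorphism by means of the inverse function theorem, using the functional independence condition to guarantee that the candidate coordinate functions have independent differentials at $p$.

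First I would reduce to the case in which exactly $j$ branches pass through $p$. By Definition \ref{def:1} there is a neighbourhood $U$ of $p$ on which $I_D = I_1\cdots I_{j'}$, with the $I_i = \langle f_i\rangle$ functionally independent smooth complex log divisors. Any factor whose generator does not vanish at $p$ contributes a unit once $U$ is shrunk to avoid its (closed) zero set, so after discarding such factors I may assume $f_1(p)=\cdots=f_j(p)=0$, that $j$ is precisely the intersection number of $p$, and that $I_D = \langle f_1\cdots f_j\rangle$ on $U$. Each $f_i$ vanishes transversely, since $I_i$ is a smooth complex log divisor.

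Next I would extract the linear-algebraic content of functional independence. Applying Definition \ref{def:1ideal} with $k=j$ and the full multi-index at the point $p$, which lies in every $f_i^{-1}(\{0\})$, yields $d_p f_1\wedge d_p\bar f_1\wedge\cdots\wedge d_p f_j\wedge d_p\bar f_j\neq 0$. Equivalently, the $2j$ real differentials $d_p\mathrm{Re}\,f_i,\ d_p\mathrm{Im}\,f_i$ ($1\le i\le j$) are linearly independent in $T_p^*M$. In particular $\dim M\ge 2j$; setting $m:=\dim M - 2j$, I complete these $2j$ functions to a full system by choosing real functions $x_{2j+1},\ldots,x_{2j+m}$ whose differentials at $p$, together with the previous ones, span $T_p^*M$.

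Then I would define $\varphi\colon U \to \mathbb{C}^j\times\mathbb{R}^m$ by $\varphi = (f_1,\ldots,f_j,x_{2j+1},\ldots,x_{2j+m})$. By the preceding step $d_p\varphi$ is an isomorphism, so after shrinking $U$ the inverse function theorem makes $\varphi$ a diffeomorphism onto a neighbourhood of $0$ with $\varphi(p)=0$ and $\varphi^*z_i = f_i$. Since $\varphi$ is a diffeomorphism, the pullback ideal $\varphi^*\langle z_1\cdots z_j\rangle$ is generated by $\varphi^*(z_1\cdots z_j) = f_1\cdots f_j$, whence $\varphi^*I_{\mathrm{std}} = \langle f_1\cdots f_j\rangle = I_D$, exhibiting the desired local isomorphism to the standard complex log divisor. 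I do not expect a serious obstacle: the entire content lies in translating functional independence into independence of the real differentials and invoking the inverse function theorem. The only points requiring care are the reduction to exactly $j$ branches through $p$ (using that zero sets are closed, so non-vanishing factors become units on a smaller neighbourhood) and the compatibility of the product ideal with pullback, which holds precisely because $\varphi$ is a diffeomorphism.
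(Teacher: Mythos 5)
Your proof is correct and follows essentially the same route as the paper: the paper's (much terser) argument likewise completes the functionally independent, transversely vanishing representatives $f_1,\ldots,f_j$ of a local normal crossing to a coordinate system near $p$ and takes these coordinates as the divisor isomorphism. Your write-up merely makes explicit the steps the paper leaves implicit (discarding factors not vanishing at $p$, translating functional independence into independence of the real differentials, and checking compatibility of the product ideal with pullback).
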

\begin{proof}
Because the representatives $f_1,\ldots,f_j$ of the ideals of a local normal crossing vanish transversely and are functionally independent near $p$, they can be completed to a local coordinate system on an open neighbourhood of $p$. These coordinates provide the divisor isomorphism.
\end{proof}
The vanishing locus of a complex log divisor is not an embedded submanifold when its intersection number is larger then one, it is however immersed.
\begin{lemma}\label{lem:immersed}
Let $I_D$ be a complex log divisor on $M^{2j+m}$, and let $D$ be its vanishing locus. Then the vanishing locus $D$ is an immersed submanifold.
\end{lemma}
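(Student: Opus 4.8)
The plan is to exhibit $D$ as the image of an immersion from a genuine smooth manifold $\tilde{D}$, namely the disjoint union of the local smooth branches of $D$ glued along their natural identifications. By Lemma \ref{lem:loccomplexlogiso}, every point $p$ of intersection number $j$ has a neighbourhood on which the divisor is isomorphic to the standard one of Example \ref{rem:locform}, so that in suitable coordinates $(z_1,\ldots,z_j,x_{2j+1},\ldots,x_{2j+m})$ on $\cc^j\times\rr^m$ we have $D=\bigcup_{i=1}^j\set{z_i=0}$. Each hyperplane $\set{z_i=0}$ is a smooth embedded submanifold of codimension two, and these are the local branches; the failure of $D$ to be embedded is precisely that distinct branches meet along the loci where several $z_i$ vanish simultaneously.

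First I would make the notion of branch global. At a point $p$, a choice of local normal crossing provides smooth complex log divisors $I_1,\ldots,I_j$ with vanishing loci $D_1,\ldots,D_j$, and I take the germs $(D_i)_p$ as the branches of $D$ through $p$. The crucial point is that these germs are intrinsic: as noted after the definition of a local normal crossing, the factorisation $I_\sigma=I_1\cdots I_j$ is unique up to reordering of the factors, so the finite set of branch germs at $p$ depends only on $D$ and not on the chosen decomposition. I would then set
\[
\tilde{D}:=\set{(p,\mathcal{G}) : p\in M,\ \mathcal{G}\text{ is a germ at }p\text{ of a smooth branch of }D},
\]
topologised so that each local branch $D_i\subset U$ determines the open set $\set{(q,(D_i)_q):q\in D_i}$, these sets forming a basis. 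Transporting the smooth structure of $D_i$ through the bijection $(q,(D_i)_q)\leftrightarrow q$ turns $\tilde{D}$ into a smooth manifold of dimension $2j+m-2=\dim M-2$, the transition maps between two such charts being smooth because on overlaps the competing branch germs coincide, again by the uniqueness of the local normal crossing.

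Finally, the projection $\iota\colon\tilde{D}\to M$, $(p,\mathcal{G})\mapsto p$, is in the above charts nothing but the inclusion $D_i\hookrightarrow U$ of a coordinate hyperplane, hence a smooth immersion; its image is $D$ since every point of $D$ lies on at least one branch. This realises $D$ as an immersed submanifold. The main obstacle, and the only point requiring genuine care, is the global well-definedness of $\tilde{D}$ together with its smooth structure: one must check that the local branches glue consistently across overlapping charts and that the resulting space is Hausdorff and second countable. Both reduce to the intrinsic nature of the branch germs — the uniqueness up to ordering of local normal crossings, which makes $(D_i)_q\neq(D_k)_q$ for distinct branches and thereby separates the finitely many points of a fibre — together with the fact that $\iota$ has finite fibres of cardinality the intersection number, so that the topological hypotheses are inherited from $M$.
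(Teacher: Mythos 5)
Your proof is correct, and it shares its local core with the paper's: both reduce, via Lemma \ref{lem:loccomplexlogiso}, to the standard divisor $\inp{z_1\cdots z_j}$ on $\cc^j\times\rr^m$, where the branches of $D$ are the coordinate hyperplanes $\set{z_i=0}$. The difference lies in what happens after that reduction. The paper's proof is three sentences long: it asserts that being an immersed submanifold is a local property, and then exhibits the immersion of the disjoint union of coordinate planes $\cc^{j-1}\times\rr^m$ into $\cc^j\times\rr^m$ for the standard model; the gluing of these local immersions into a single immersion of one manifold with image all of $D$ is left implicit. Your construction of $\tilde{D}$ as the space of pairs $(p,\mathcal{G})$ of a point and a branch germ is precisely that gluing argument made explicit: the intrinsic nature of branch germs (uniqueness of a local normal crossing up to reordering, which the paper records right after defining local normal crossings) makes the charts consistent, functional independence keeps distinct branch germs distinct at every common point (which is what gives Hausdorffness), and finiteness of the fibres together with second countability of $M$ yields second countability of $\tilde{D}$. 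So your route is not a different strategy so much as a completed one: it actually proves the ``local property'' claim that the paper takes for granted --- being the image of an immersion is not a condition one can verify chart by chart without an argument, and your normalization is that argument --- and as a bonus it produces the abstract manifold $\tilde{D}$ which the paper describes only intuitively in the paragraph following the lemma and then uses to define the components of $D$. What the paper's brevity buys is brevity; what your construction buys is the actual content of the gluing step, at the cost of some point-set verifications.
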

\begin{proof}
As being immersed is a local property, we need to show that every point in $D$ has a neighbourhood on which $D$ is immersed. By Lemma \ref{lem:loccomplexlogiso} it suffices to show that the vanishing locus of the standard complex log divisor is an immersed submanifold. The obvious map of inclusions of the different coordinate planes $\bigcup_{i=1}^j \cc^{j-1}\times \rr^m \rightarrow \cc^j \times \rr^m$ provide this immersion.
\end{proof}
Intuitively, the vanishing locus of a complex log divisor is the immersion of a manifold, $\tilde D$, obtained from $D$ by duplicating the intersection locus and separating the strands whenever self-crossings occur. Here we need to introduce some subtle language variation to distinguish between different meanings of the word {\it component}: a {\bf connected component} of $D$ is just that, a connected component of $D$ as a subspace of $M$, while a {\bf component} of $D$ is the image of a connected component of $\tilde{D}$.

The degeneracy locus is not only an immersed submanifold but it is also stratified by embedded smooth submanifolds.
\begin{definition}\label{rem:filtration}
Let $I_D$ be a complex log divisor with intersection number $n$ on a manifold $M$. Given $1 \leq j \leq n$, the set of points with intersection number \emph{at least $j$} will be denoted by $D(j)$. These sets induce a filtration on $M$, namely
\begin{equation*}
M = D(0) \supset D = D(1) \supset D(2) \supset \cdots \supset D(n).
\end{equation*}
We will call this filtration the \textbf{intersection stratification} of $M$ induced by $I_D$. The strata of this stratification are denoted by
\begin{equation*}
C(i):= D(i)\backslash D(i+1),
\end{equation*}
and consist of the points with intersection number \emph{exactly} $i$, and each have codimension $2i$.
\end{definition}
The following is immediate, and will be used without further mention throughout this paper.

\begin{lemma}\label{lem:restricdiv}
Let $I_D$ be a complex log divisor on a manifold $M$ with intersection number at least $i$. Then $\rest{I_D}{M\backslash D(i+1)}$ is a complex log divisor on $M \backslash D(i+1)$ with intersection number $i$.
\end{lemma}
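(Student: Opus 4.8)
The plan is to verify the two assertions separately: that the restriction is again a complex log divisor, and that its intersection number is exactly $i$.

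First I would establish that $M \setminus D(i+1)$ is an open subset, which amounts to showing each $D(j)$ is closed, i.e.\ that the intersection number is upper semi-continuous. By Lemma~\ref{lem:loccomplexlogiso} every point $p$ has a neighbourhood on which $I_D$ is isomorphic to the standard complex log divisor $\langle z_1\rangle \cdots \langle z_k\rangle$ on $\cc^k \times \rr^m$, where $k$ is the intersection number of $p$. In this model the intersection number of a nearby point cannot exceed $k$, since the factors whose defining coordinate $z_l$ is nonzero at $p$ remain nonvanishing on a neighbourhood of $p$. Hence the intersection number does not increase locally, so each $D(j)$ is closed and $M \setminus D(i+1)$ is open. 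Restricting the line bundle and the section then yields a complex divisor on $M \setminus D(i+1)$.

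Next I would note that being a complex log divisor is a purely local condition (Definition~\ref{def:1}): every point must have a neighbourhood on which the ideal factors as a product of functionally independent smooth complex log divisors. Such local factorisations restrict to any open subset, and functional independence and transversality are preserved under restriction. Therefore $\rest{I_D}{M \setminus D(i+1)}$ is a complex log divisor.

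Finally I would pin down the intersection number. It is at most $i$, because every point of intersection number at least $i+1$ has been removed. Conversely, since $I_D$ has intersection number at least $i$, there is a point $p$ of intersection number $k \geq i$; applying Lemma~\ref{lem:loccomplexlogiso} near $p$ gives the standard model on $\cc^k \times \rr^m$, in which the point where exactly $z_1,\ldots,z_i$ vanish has intersection number precisely $i$ and hence lies in $M \setminus D(i+1)$. This produces a point of intersection number exactly $i$, so the restricted divisor has intersection number $i$. The only delicate step is the openness of $M \setminus D(i+1)$, namely the upper semi-continuity argument above; everything else is inherited directly from the local normal form and the local nature of the definition, which is why the statement is essentially immediate.
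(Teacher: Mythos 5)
Your proof is correct: the paper states Lemma \ref{lem:restricdiv} without any proof (it is declared ``immediate''), and your argument supplies precisely the details that make it so --- the local normal form of Lemma \ref{lem:loccomplexlogiso} gives upper semi-continuity of the intersection number and hence openness of $M\backslash D(i+1)$, while the local nature of Definition \ref{def:1} and the presence of points of every intermediate intersection number in the standard model $\inp{z_1\cdots z_k}$ handle the restriction and the value of the intersection number. The only assertion you leave implicit is the lower bound that a point where exactly $z_1,\ldots,z_i$ vanish has intersection number at least $i$ (not fewer); this holds because the vanishing locus has $i$ local branches with distinct tangent planes there and each smooth complex log factor contributes exactly one branch, which is at the same level of immediacy as the rest of the argument.
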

With this in mind we can verify that the definition of the stratification makes sense.

\begin{lemma}\label{lem:honeststrata}
The filtration from Definition \ref{rem:filtration} defines a smooth stratification on $M$.
\end{lemma}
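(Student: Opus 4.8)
The plan is to show that the filtration $M = D(0) \supset D(1) \supset \cdots \supset D(n)$ satisfies the defining axioms of a smooth (Whitney) stratification: namely, that each difference $C(i) = D(i) \setminus D(i+1)$ is a smooth embedded submanifold of $M$, that the strata are locally finite and locally closed, and that the frontier condition (and, if one wants Whitney regularity, the Whitney conditions (a) and (b)) holds. Since all of these are local conditions, by Lemma~\ref{lem:loccomplexlogiso} I may work entirely in the standard model: near a point of intersection number $j$ the divisor is isomorphic to the standard complex log divisor $\otimes_{i=1}^j \langle z_i \rangle$ on $\cc^j \times \rr^m$, with coordinates $(z_1,\ldots,z_j,x_{2j+1},\ldots,x_{2j+m})$.

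First I would identify the strata explicitly in the standard model. A point of $\cc^j \times \rr^m$ has intersection number exactly equal to the number of coordinates $z_i$ that vanish there, so $D(k)$ is the union of the coordinate subspaces $\{z_{i_1} = \cdots = z_{i_k} = 0\}$ over all $k$-element index sets, and $C(k)$ is the locus where \emph{exactly} $k$ of the $z_i$ vanish. Each connected piece of $C(k)$ is cut out by setting a fixed subset of the $z_i$ to zero while the remaining coordinates are nonzero, which is manifestly an embedded smooth submanifold of codimension $2k$; this establishes that the strata are smooth manifolds of the asserted codimension and verifies the local-finiteness and local-closedness conditions.

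Next I would verify the frontier condition: the closure of each stratum is a union of strata, and if one stratum meets the closure of another then it is contained in that closure. In the standard model the closure of the piece $\{z_{i_1}=\cdots=z_{i_k}=0,\ \text{other } z\neq 0\}$ is simply the coordinate subspace $\{z_{i_1}=\cdots=z_{i_k}=0\}$, which is a union of lower strata of higher intersection number, so the frontier condition is immediate. For the Whitney regularity conditions, the key point is that all the strata in the model are linear coordinate subspaces of a product $\cc^j \times \rr^m$; Whitney conditions (a) and (b) are classically known to hold for any arrangement of linear subspaces, since limiting tangent planes and secant lines are controlled by the constant tangent spaces of the flats involved. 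Thus regularity reduces to a standard fact about coordinate-subspace arrangements.

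The main obstacle, and the step deserving the most care, is ensuring that the local picture glues consistently into a genuine global stratification, i.e.\ that the isomorphism of Lemma~\ref{lem:loccomplexlogiso} carries the strata $C(i)$ of the standard model to the intersection-number strata of $I_D$. This holds because the intersection number is an intrinsic invariant of the divisor (it is defined via the ideal, independently of the chosen local generators), so a divisor isomorphism automatically preserves the filtration $D(\bullet)$ and hence the strata; the local smooth submanifold structure transported by the isomorphism is therefore well-defined and independent of choices. Once this compatibility is noted, the smoothness of each $C(i)$, local finiteness, and the frontier condition all pass from the model to $M$, completing the verification that Definition~\ref{rem:filtration} yields a smooth stratification.
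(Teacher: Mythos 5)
Your proof is correct and follows essentially the same route as the paper's: both reduce to the standard local model of Lemma~\ref{lem:loccomplexlogiso}, in which the strata are coordinate-subspace loci and everything can be checked by inspection. The paper's own proof is terser—it obtains smoothness of each $C(i)$ by viewing it as the top stratum of the divisor restricted to $M\setminus D(i+1)$ (via Lemma~\ref{lem:restricdiv}) and the regular value theorem, and then simply asserts that the local form yields the stratification conditions—so your explicit verification of the frontier and Whitney conditions, and of the fact that divisor isomorphisms preserve intersection numbers (hence the local identifications glue), fills in details the paper leaves implicit.
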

\begin{proof}
We first note that the highest codimension stratum $D(n)$ is a smooth submanifold by the regular value theorem. Next, the subset $C(i) = D(i)\backslash D(i+1)$ is the highest codimension stratum of the restricted divisor to $M\backslash D(i+1)$ and is therefore smooth. Finally the filtration induces a stratification precisely because we have the local form as described in Lemma \ref{lem:loccomplexlogiso}.
\end{proof}

\subsection{Elliptic divisors}
We now introduce self-crossing elliptic divisors. These divisors arise as the real part of complex log divsors, but can be defined independently. 
\begin{definition}[\cite{CG17}]
A \textbf{smooth elliptic divisor} $(R,q)$ consists of a real line bundle $R$ together with a section $q \in \Gamma(R)$, whose zero set $D = q^{-1}(0)$ is a codimension-two submanifold along which its normal Hessian is positive definite.
\end{definition}
The normal Hessian of the section $q$ is the section ${\rm Hess}(q) \in \Gamma(D;{\rm Sym}^2 N^*D \otimes R)$ containing the leading term of its Taylor expansion. As was done for complex log divisors we usually refer to the divisor by its corresponding ideal. Note that a given codimension-two submanifold may carry multiple smooth elliptic divisor structures. To proceed we again consider divisors which are local normal crossings of smooth elliptic divisors.
\begin{definition}\label{def:elli}
A \textbf{self-crossing elliptic divisor} on a manifold $M$ is a real divisor $I_{\abs{D}}$ such that for every $p \in M$ there exists an open neighbourhood $U$ of $p$ such that 
\begin{equation*}
I_{\abs{D}}(U) = I_{\abs{D_1}}\cdot \ldots \cdot I_{\abs{D_j}}.
\end{equation*}
Here the $I_{\abs{D_i}}$ are smooth elliptic divisors on $U$ whose zero loci $D_i$ intersect transversely\footnote{A collection of submanifolds $\set{D_i}$ of is said to intersect transversely at a point $x \in M$ if 
\begin{equation*}
\text{codim}(\bigcap_i T_xD_i) = \sum_i \text{codim}(T_xD_i).
\end{equation*}
}.
\end{definition}
As discussed in Remark \ref{rem:term} we will often omit the prefix ``self-crossing'' and instead add ``smooth'' when referring to an elliptic divisor in the sense of \cite{CG17}. 
\begin{remark}\label{rem:hessindep}
The condition that the loci of the smooth elliptic divisors are transverse is equivalent to the following statement: for all local generators $f_i$ of $I_{\abs{D_i}}$ we have that
\begin{equation*}
\ker \text{Hess}_p (f_{i_1}) \cap \cdots \cap \ker \text{Hess}_p (f_{i_k}) \qquad \text{for all } p \in \cap_{l=1}^k f_{i_l}^{-1}(\set{0})
\end{equation*}
has minimal dimension for all multi-indices $(i_1,\ldots,i_k)$ of length smaller or equal than $k$.
\end{remark}
Many of the notions we defined for complex log divisors with self-crossings can also be defined for elliptic divisors with self-crossings. In particular they have an intersection number and an induced stratification, which will be denoted in the same manner as in the complex log case.

An important class of elliptic divisors arises from complex log divisors:
\begin{example}\label{ex:complextoelli}
If $I_D$ is a complex log divisor, then $I_D \cdot \bar{I}_D$ is invariant under conjugation. Therefore, there exists a real ideal $I_{\abs{D}}$ such that $I_{\abs{D}} \otimes \cc = I_D \cdot \bar{I}_D$. By definition, locally $I_{D} = I_{D_1} \cdot \ldots \cdot I_{D_n}$, where the $I_{D_i}$ are smooth complex log divisors with transverse zero loci. Therefore we see that
\begin{equation*}
I_{\abs{D}}(U) \otimes \cc = (I_{D_1} \cdot \bar{I}_{D_1}) \cdot \ldots \cdot (I_{D_n}\cdot \bar{I}_{D_n}),
\end{equation*}
hence $I_{\abs{D}}(U)$ is given as the product of smooth elliptic divisors with transverse vanishing loci. We conclude that $I_{\abs{D}}$ is an elliptic divisor, and call it \textbf{the elliptic divisor induced by a complex log divisor}.
\end{example}

The following is completely analogous to the complex log setting (Example~\ref{ex:mainexample}):
\begin{example}\label{ex:mainexampleelliptic}
Given smooth complex elliptic divisors $I_{\abs{D_i}}$ for which the vanishing loci $D_i$ are transverse, we have that $I_{\abs{D}} := \otimes_{i=1}^n I_{\abs{D_i}}$ defines an elliptic divisor with intersection number $n$. We call this a \textbf{global normal crossing} elliptic divisor.
\end{example}
By definition, every elliptic divisor is locally of the above form, warranting the following.
\begin{definition}
Given an elliptic divisor $I_{\abs{D}}$, we call a choice of local smooth complex divisors near a point as in Definition \ref{def:elli} a \textbf{local normal crossing}.
\end{definition}
\begin{example}\label{rem:locformelliptic}
Let $(x_1,y_1,\ldots, x_j,y_j,x_{j+1},\ldots,x_m)$ be coordinates on $\rr^{2j} \times \rr^m$ and define smooth elliptic divisors $I_{\abs{D_i}} = \langle x_i^2+y_i^2 \rangle$. We call $I_{\abs{D}}:= \otimes_{i=1}^n I_{\abs{D_i}}$ the \textbf{standard elliptic divisor of intersection number $j$ on $\rr^{2j} \times \rr^m$}.
\end{example}
Using the Morse--Bott lemma we can locally put an elliptic divisor in standard form.
\begin{lemma}\label{lem:locMB}
Let $I_{\abs{D}}$ be an elliptic divisor on a manifold $M$ and let $p \in M$ have intersection number $j$. Then $I_{\abs{D}}$ is locally isomorphic to the standard elliptic divisor of intersection number $j$ on $\rr^{2j} \times \rr^m$, where $\dim M = 2j + m$.
\end{lemma}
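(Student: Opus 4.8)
The plan is to reduce the statement to the smooth (single-divisor) case established in \cite{CG17} and then to combine the resulting normal forms along the transverse directions. First I would invoke the local normal crossing structure: since $p$ has intersection number $j$, Definition \ref{def:elli} provides a neighbourhood $U$ of $p$ on which $I_{\abs{D}}(U) = I_{\abs{D_1}} \cdot \ldots \cdot I_{\abs{D_j}}$, where each $I_{\abs{D_i}}$ is a smooth elliptic divisor and the zero loci $D_i$ meet transversely at $p$. The target standard form $\otimes_{i=1}^{j} \langle x_i^2 + y_i^2 \rangle$ factors as a product of single smooth elliptic factors, so it suffices to produce a single coordinate chart in which \emph{each} factor $I_{\abs{D_i}}$ is simultaneously brought to the model $\langle x_i^2 + y_i^2 \rangle$ with its own pair of coordinates, the remaining transverse coordinates being shared.

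The key technical tool is the normal Hessian. For a single smooth elliptic divisor the defining condition is that a local generator $f_i$ has $D_i$ as a codimension-two zero set along which $\mathrm{Hess}(f_i)$ is positive definite in the normal directions; the smooth version of the Morse--Bott lemma then yields local coordinates $(x_i, y_i)$ transverse to $D_i$ with $f_i = x_i^2 + y_i^2$ (up to multiplication by a nonvanishing function, i.e.\ up to divisor isomorphism). The heart of the argument is to do this for all $j$ factors at once. The transversality hypothesis, rephrased via normal Hessians in Remark \ref{rem:hessindep}, guarantees that the kernels $\ker \mathrm{Hess}_p(f_{i})$ are in general position, so the positive-definite normal directions of the distinct factors span complementary $2$-planes in $T_pM$. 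Concretely I would first split off a smooth manifold transverse to $D(j) = \bigcap_i D_i$: since all the $D_i$ pass through $p$ with transverse tangent spaces, $D(j)$ is a smooth codimension-$2j$ submanifold near $p$ (Lemma \ref{lem:honeststrata}), and one can choose coordinates in which $D(j) = \{x_1 = y_1 = \cdots = x_j = y_j = 0\}$ while the $2j$ transverse directions are partitioned, $2$ per factor, according to the normal bundles $N_p D_i$.

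With this splitting in hand I would apply a parametrised (Morse--Bott) normal form to each generator $f_i$ separately, using that $f_i$ depends nondegenerately only on its own normal pair $(x_i, y_i)$ to leading order while being constant to leading order in the directions tangent to $D_i$; the simultaneous applicability follows because the transverse $2$-planes are independent, so the coordinate changes needed for different factors act on disjoint pairs of directions and can be assembled into a single diffeomorphism germ. This realises each $I_{\abs{D_i}}$ as $\langle x_i^2 + y_i^2 \rangle$ and hence the product as the standard elliptic divisor, giving the desired divisor isomorphism. I expect the main obstacle to be precisely this simultaneity: ensuring that the Morse--Bott coordinate change for one factor does not spoil the already-normalised form of the others, which is exactly where the functional/Hessian independence of Remark \ref{rem:hessindep} must be used, rather than just the transversality of the zero loci as subsets.
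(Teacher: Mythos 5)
Your overall strategy --- reduce to the smooth case via a local normal crossing, apply the Morse--Bott lemma to each factor $f_i$, and invoke the Hessian-independence condition of Remark \ref{rem:hessindep} to treat all factors at once --- is the same as the paper's. But the crux, which you yourself flag as ``the main obstacle'', is left as an assertion rather than proved, and as stated it does not hold: you claim that ``the coordinate changes needed for different factors act on disjoint pairs of directions and can be assembled into a single diffeomorphism germ.'' The Morse--Bott lemma, applied as a black box to $f_i$, produces a diffeomorphism germ of a whole neighbourhood of $p$; nothing in its statement confines that germ to the $2$-plane $N_pD_i$, nor makes it preserve the coordinate functions already adapted to the other factors. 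So after normalising $f_1 = x_1^2+y_1^2$, the chart that normalises $f_2$ will in general destroy this identity, and the independence of the transverse $2$-planes at the single point $p$ does not by itself repair this. That is a genuine gap, and it sits exactly at the step your proof needs most.

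The paper closes this gap with a splicing argument rather than any attempt to make one normalisation respect another. Apply Morse--Bott to each $f_i$ separately, each in its own chart: for $j=2$, say $f_1 = x_1^2+y_1^2$ in coordinates $(x_1,y_1,x_2,y_2,z_3,\ldots)$ and $f_2 = \tilde{x}_2^2+\tilde{y}_2^2$ in coordinates $(\tilde{x}_1,\tilde{y}_1,\tilde{x}_2,\tilde{y}_2,\tilde{z}_3,\ldots)$. Then define the mixed map $\Phi = (x_1,y_1,\tilde{x}_2,\tilde{y}_2,z_3,\ldots)$, which selects from each chart precisely the coordinate pair in which the corresponding $f_i$ is a sum of squares. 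One has $f_1+f_2 = F\circ\Phi$ with $F(x_1,\ldots,x_n)=x_1^2+x_2^2+x_3^2+x_4^2$, so the chain rule gives $\Hess_{f_1+f_2} = \Hess_F(\Phi_*\cdot,\Phi_*\cdot)$; since Remark \ref{rem:hessindep} guarantees that the normal Hessian of $f_1+f_2$ is nondegenerate, $\Phi_*$ is injective on $N(D_1\cap D_2)$, hence $\Phi$ is a local diffeomorphism by the inverse function theorem, and in the coordinates it defines both $f_i$ are simultaneously in standard form. (Alternatively, your sequential scheme can be repaired by reproving Morse--Bott in the refined form in which only the normal pair of coordinate functions is replaced --- write $f_i = \sum_{k,l} h_{kl}u_ku_l$ with $(h_{kl})$ positive definite and factor it smoothly --- followed by the same inverse-function-theorem check for the assembled map; but that refinement is itself something to prove, not what the lemma gives off the shelf.)
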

\begin{proof}
To simplify notation we will consider $j = 2$ as the proof in the general case is identical. Let $U$ be an open neighbourhood of a point $p$ of intersection number $2$ and let $I_{\abs{D_1}},I_{\abs{D_2}}$ be a local normal crossing. Let $f_1,f_2$ be representatives of $I_{\abs{D_1}},I_{\abs{D_2}}$ respectively, and apply the Morse--Bott lemma to obtain coordinates $(x_1,y_1,x_2,y_2,z_3,\ldots)$, $(\tilde{x}_1,\tilde{y}_1,\tilde{x}_2,\tilde{y}_2,\tilde{z}_3,\ldots)$ on neighbourhoods $U_1,U_2$ of $p$ respectively such that $f_1 = x_1^2 + y_1^2$ and $f_2 = \tilde{x}_2^2 + \tilde{y}_2^2$. Consider
\begin{equation*}
\Phi = (x_1,y_1,\tilde{x}_2,\tilde{y}_2,z_3,\ldots)\colon U_1 \cap U_2 \rightarrow \rr^n,
\end{equation*}
and 
\begin{equation*}
F\colon \rr^n \rightarrow \rr \quad (x_1,\ldots,x_n) \mapsto x_1^2+x_2^2+x_3^2+x_4^2.
\end{equation*}
Then $f_1+f_2 = F \comp \Phi$. Note that the normal Hessian of $F$ is non-degenerate and that the normal Hessian of $f_1 + f_2$ is non-degenerate by the independence condition in Remark \ref{rem:hessindep}. Because
\begin{equation*}
\Hess_{f_1+f_2} = \Hess_{F\comp \Phi} = \Hess_F(\Phi_*\cdot,\Phi_*\cdot),
\end{equation*}
we conclude that $\Phi_*$ must be injective on $N(D_1 \cap D_2)$. Therefore $\Phi$ must be a local diffeomorphism and, using the inverse function theorem and possibly shrinking the domain of definition, we conclude that $(x_1,y_1,\tilde{x}_2,\tilde{y}_2,z_3,\ldots)$ gives the required coordinate system.
\end{proof}

It follows from this lemma that, just as for complex divisors, the vanishing locus of an elliptic divisor, $(R,q)$, is an immersed submanifold, $D$, with transverse self crossings. Further since $q$ is a trivialization of $R$ over $M\backslash D$ and $D$ has codimension two, $R$ is also trivializable and $q$ determines a preferred orientation for $R$. Therefore,  one can define an elliptic divisor alternatively as the ideal generated by a function $f\colon M \to \rr_+$ whose zeros are locally of the form
\[ f(x_1,y_1,\dots,x_k,y_k,x_{k+1},\dots, x_{n}) = (x_1^2 + y_1^2)\dots (x_k^2 + y_k^2).\]
\subsubsection{Examples}
A class of examples of elliptic divisors arises from toric geometry.
\begin{example}\label{ex:toric}
Let $\mu\colon M \rightarrow \rr^n$ be a toric manifold with moment polytope $\Delta$ and let $\lambda_i \in \rr^n$ be vectors transverse to its faces. If we denote $f_{\lambda_i}\colon \rr^n \rightarrow \rr,  x \mapsto \inp{x,\lambda_i}$, then the ideal
\begin{equation}\label{eq:toric}
I := \inp{(\mu\comp f_{\lambda_1})\cdot \ldots \cdot (\mu \comp f_{\lambda_{n+1}})}
\end{equation}
defines an elliptic divisor on $M$ with intersection number $n$ and vanishing locus $\mu^{-1}(\partial \Delta)$. Namely, the functions $f_{\lambda_i}$ are linear and have the faces of the moment polytope as zero sets. Because $M$ is toric, the components of the moment map are definite Morse--Bott functions, hence so are the compositions $\mu \comp f_{\lambda_i}$. We conclude that the ideal $I$ defines an elliptic divisor.
\end{example}
An explicit case of the setting of the above example occurs on the manifold $\cc P^n$.
\begin{example}\label{ex:divisorcpn}
Consider $\cc P^n$ with the moment map
\begin{equation}\label{eq:momentcpn}
\mu\colon \mathbb{C}P^n \rightarrow \Delta \qquad [z_0:z_1:\cdots:z_n] \mapsto \frac{(\abs{z_0}^2,\abs{z_1}^2,\ldots,\abs{z_{n-1}}^2)}{\abs{z_0}^2+ \abs{z_1}^2 +\cdots + \abs{z_n}^2}.
\end{equation}
Here $\Delta = \set{(x_1,\ldots,x_n) \in \rr^n : x_i \leq 1}$ denotes the moment polytope. Proceeding as in Example \ref{ex:toric} endows $\cc P^n$ with the structure of an elliptic divisor. Note that this is the elliptic divisor induced by the complex log divisor of Example \ref{ex:complexlogcpn}.
\end{example}
\subsection{Elliptic versus complex log divisors}
As we have seen in Example \ref{ex:complextoelli}, a complex log divisor $(L,\sigma)$ induces an elliptic divisor. The complex log divisor also induces a complex structure on the normal bundle of $C(1)$ via the isomorphism\footnote{Because $\sigma$ vanishes transversely outside of $D(2)$, the normal derivative is an isomorphism.}
\begin{equation*}
	\rest{d^{\nu}\sigma}{C(1)}\colon NC(1) \rightarrow \rest{L}{C(1)}.
\end{equation*}
Note however that this complex structure depends on the particular choice of section $\sigma$. The orientation on $NC(1)$ induced by these possibly different complex structures is independent of such choices. These two pieces of information, the elliptic divisor and the co-orientation, completely determine the complex log divisor up to isomorphism. This statement was already mentioned in the smooth case in \cite[Section 1.2]{CG17}, but appeared there without proof. 
\begin{proposition}\label{prop:ellivscomplex}
Let $M$ be a manifold. The association
\begin{equation*}
(L,\sigma) \mapsto ((R,q),\mathfrak{o})
\end{equation*}
which sends a complex log divisor on $M$ to its associated elliptic divisor, together with the induced co-orientation of $C(1)$, induces a bijection of isomorphism classes of complex log divisors and isomorphism classes of elliptic divisors with chosen co-orientation of $C(1)$.
\end{proposition}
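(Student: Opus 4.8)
The plan is to prove bijectivity by constructing the inverse assignment explicitly, using the local normal forms of Lemma~\ref{lem:loccomplexlogiso} (standard complex log divisor) and Lemma~\ref{lem:locMB} (standard elliptic divisor) and then patching. Since an isomorphism of divisors is the identity map carrying one ideal onto the other, an isomorphism class of complex log divisor is precisely its complex ideal $I_\sigma$, and an isomorphism class of elliptic divisor is its real ideal $I_{\abs{D}}$; so the task is to show that $I_\sigma \mapsto (I_{\abs{D}},\mathfrak{o})$ is a bijection onto pairs consisting of an elliptic ideal and a co-orientation of $C(1)$. First I would record well-definedness: the elliptic ideal is as in Example~\ref{ex:complextoelli}, and although $\rest{d^\nu\sigma}{C(1)}$ depends on the chosen section, the induced orientation $\mathfrak{o}$ of $NC(1)$ does not, as observed above the statement. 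Indeed, if $I_\sigma = I_{\sigma'}$ then locally $\sigma' = u\sigma$ with $u$ nowhere zero, so $\abs{\sigma'}^2 = \abs{u}^2\abs{\sigma}^2$ generates the same elliptic ideal and the two normal complex structures on $C(1)$ differ by multiplication by $\rest{u}{C(1)} \in \cc^*$, hence carry the same co-orientation.

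For injectivity, suppose $(L,\sigma)$ and $(L',\sigma')$ induce the same elliptic ideal $I_{\abs{D}}$ and the same $\mathfrak{o}$. This is a local statement, so I work near a point $p$ of intersection number $j$ and write local normal crossings $I_\sigma = \prod_{i}\langle f_i\rangle$ and $I_{\sigma'} = \prod_i\langle g_i\rangle$. The branches of the immersed vanishing locus $\tilde D$ are intrinsic to the elliptic ideal, so after reordering the smooth pieces cut out by $f_i$ and $g_i$ coincide, and the componentwise factorisation gives $\langle\abs{f_i}^2\rangle = \langle\abs{g_i}^2\rangle$ for each $i$. Consequently each $g_i$ is, up to a nowhere-zero factor, either $f_i$ or $\bar f_i$ — the two complex square roots of the same real generator. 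Here the co-orientation is decisive: passing to $\bar f_i$ reverses the complex structure that $d^\nu$ induces on the normal bundle of the $i$-th branch, hence reverses the orientation; since $\mathfrak{o}$ agrees, the choice $g_i = u_i f_i$ with $u_i$ nonvanishing is forced. Therefore $I_\sigma = I_{\sigma'}$ near $p$, and being local this yields global equality of the ideals.

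For surjectivity, given an elliptic ideal $I_{\abs{D}}$ and a co-orientation $\mathfrak{o}$ of $C(1)$, I construct a complex log ideal inducing this data by building local generators and gluing. The positive-definite normal Hessian supplies a conformal structure on each $2$-dimensional normal space, and together with $\mathfrak{o}$ this determines a canonical rotation-by-$90^\circ$, i.e.\ a complex structure on the normal bundle along $C(1)$. Concretely, near a point of intersection number $j$ I use Lemma~\ref{lem:locMB} to get coordinates with $I_{\abs{D}} = \langle (x_1^2+y_1^2)\cdots(x_j^2+y_j^2)\rangle$, set $z_i = x_i + iy_i$ or $x_i - iy_i$ according to which sign matches $\mathfrak{o}$ on the $i$-th branch, and take the local generator $\prod_i z_i$. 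On an overlap the branches match and, because the chosen square roots induce the \emph{same} complex structure on each branch's normal bundle, the ratio of the two local generators extends to a nowhere-zero smooth complex function across $D$; had the co-orientations disagreed, the ratio would behave like $\bar z_i/z_i$ and fail to extend. Thus the local ideals agree on overlaps and define a global complex log ideal inducing $(I_{\abs{D}},\mathfrak{o})$.

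The main obstacle is precisely this gluing step in surjectivity: controlling the square-root ambiguity so that ratios of local generators are genuine nonvanishing smooth complex functions rather than conjugate-type expressions. The resolution is the equivalence, at each branch, between agreement of the induced co-orientation and smooth extendability of the ratio, which is exactly the content that makes $\mathfrak{o}$ the correct extra datum. The same equivalence is what powers the square-root dichotomy in the injectivity argument, so once it is established carefully the remaining verifications (that the glued ideal is locally principal of complex log type, and that it induces the prescribed elliptic ideal and co-orientation) are routine consequences of the local normal forms.
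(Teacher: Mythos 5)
Your outline follows the same overall architecture as the paper's proof (reduce to the smooth/branch case via local normal crossings, use the co-orientation to resolve a conjugation ambiguity, glue local solutions for surjectivity), but it leaves unproven exactly the step that carries all the analytic weight: the assertion in your injectivity argument that $\langle f_i\bar f_i\rangle = \langle g_i\bar g_i\rangle$ plus agreement of co-orientations forces $g_i = u_i f_i$ with $u_i$ nowhere vanishing (your ``square-root dichotomy''), equivalently the claim in your surjectivity gluing that the ratio of two admissible local generators extends to a nowhere-zero smooth function across $D$. This is not a routine consequence of the normal forms. In the $C^\infty$ category, knowing that $g\bar g = u\, f\bar f$ with $u>0$ does not let you factor $g$ through $f$ by any soft argument: a first-order Taylor expansion along $D$ only shows that the $1$-jet of $g$ is a multiple of $z$ or of $\bar z$ (writing $f=z$), and to exclude conjugate-type terms $\bar z^k$ at \emph{all} orders and then pass from formal divisibility to genuine smooth divisibility one needs real work. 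The paper does precisely this: it expands $w$ as a power series in $z,\bar z$, uses the co-orientation hypothesis to pin down the relation $\abs{a_{10}}^2 = \abs{a_{01}}^2 + h$, kills the coefficients $a_{0,i}$ term by term by comparing the series of $w\bar w$ and $g z\bar z$, and only then invokes Malgrange's theorem \cite[Theorem 1.1]{M66} to convert divisibility of the formal power series into membership $w \in \langle z\rangle$. Smooth function ideals do not enjoy unique factorization, so without an argument of this kind (or an explicit citation of a division theorem that supplies it), the dichotomy you rely on is an assumption, not a deduction.

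Your closing paragraph acknowledges that this equivalence must be ``established carefully,'' but deferring it is deferring the proposition itself: both your injectivity step and your surjectivity gluing reduce to it, so nothing in the proposal actually proves it. The remaining structural points you make (uniqueness of the branch decomposition up to reordering, independence of $\mathfrak{o}$ from the choice of section, the Hessian-plus-co-orientation construction of the normal complex structure for surjectivity) are all consistent with the paper, and the paper's own gluing step is, as you suggest, powered by local injectivity; so the proposal would become a correct proof of essentially the paper's kind once the divisibility lemma is supplied, but as written it has a genuine gap at its core.
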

\begin{proof}
Let $I_1,I_2$ be two complex log divisors. Assume that $I_1$ and $I_2$ both induce the same elliptic ideal and the same co-orientation on the normal bundle to $C(1)$. We have to prove that $I_1 = I_2$. We will proceed via several steps.

\textbf{Injectivity for smooth divisors}: We first prove injectivity for smooth divisors. Let $z,w$ be complex coordinates such that $I_1 = \inp{z}$ and $I_2 = \inp{w}$. By assumption we have $I_1 \otimes \bar{I_1} = I_2 \otimes \bar{I_2}$ and thus there exists a nowhere vanishing function $g \in C^{\infty}(M,\rr)$ such that $g z\bar{z} =  w\bar{w}$. The fact that $I_1$ and $I_2$ induce the same co-orientation ensures the existence of some strictly positive function $h \in C^{\infty}(D;\rr)$ such that $h d^{\nu}z \wedge d^{\nu} \bar{z} = d^{\nu}w \wedge d^{\nu} \bar{w}$.
\begin{claim*} We have that $h= \rest{g}{D}$.
\end{claim*}
\begin{proof}[Proof of claim]
By taking the derivative of $gz\bar{z} = w \bar{w}$ with respect to $\bar{w}$, we obtain that
\begin{equation*}
\pd{g}{\bar{w}}z\bar{z}+g\pd{z}{\bar{w}}\bar{z}+ gz\pd{\bar{z}}{\bar{w}} = w.
\end{equation*}
By taking the derivative of this equation with respect to $z$, we get
\begin{equation*}
\pd{^2g}{z\partial \bar{w}}z\bar{z}+ \pd{g}{\bar{w}}\bar{z}+\pd{g}{z}\pd{z}{\bar{w}}\bar{z}+\pd{g}{z}z\pd{\bar{z}}{\bar{w}} + g\pd{\bar{z}}{\bar{w}} = \pd{w}{z}.
\end{equation*}
In particular we find $\rest{g}{D}\rest{\pd{\bar{z}}{\bar{w}}}{D} = \rest{\pd{w}{z}}{D}$. Noting that
\begin{equation*}
\inp{d^{\nu}w\wedge d^{\nu} \bar{w}, \partial_z \wedge \partial_{\bar{w}}} = \pd{w}{z} = h\pd{\bar{z}}{\bar{w}},
\end{equation*}
we can combine these facts to conclude that $\rest{g}{D} = h$.
\end{proof}
Continuing our main line of reasoning, in order to show that $w \in I_1$ we are going to invoke Malgrange's Theorem, \cite[Theorem 1.1]{M66}, which states that $w \in I_1$ if and only if its formal power series with respect to $z$ and $\bar{z}$ is divisible by $z$. We expand $w$ as a power series in $z$ and $\bar{z}$: 
\begin{equation*}
w = a_{10}z + a_{01}\bar{z} + \sum_{i + j \geq 2} a_{ij} z^i \bar{z}^j.
\end{equation*}
\begin{claim*}
We have that $\abs{a_{10}}^2 = \abs{a_{01}}^2 + h$.
\end{claim*}
\begin{proof}[Proof of claim]
Using $h d^{\nu}z \wedge d^{\nu} \bar{z} = d^{\nu}w \wedge d^{\nu} \bar{w}$, we find 
\begin{align*}
\inp{d^{\nu}w\wedge d^{\nu} \bar{w}, \partial_z \wedge \partial_{\bar{z}}} &=  h,
\end{align*}
but also
\begin{align*}
\inp{d^{\nu}w\wedge d^{\nu} \bar{w}, \partial_z \wedge \partial_{\bar{z}}} &=\pd{w}{z}\pd{\bar{w}}{\bar{z}}-\pd{\bar{w}}{z}\pd{w}{\bar{z}}\\
&= \abs{\pd{w}{z}}^2 - \abs{\pd{\bar{w}}{z}}^2\\
&= \abs{a_{10}}^2 + \abs{a_{01}}^2.\qedhere
\end{align*}
\end{proof}
Knowing this, we can express the product $w \bar{w}$ as
\begin{align*}
w\bar{w} &= (\abs{a_{10}}^2+\abs{a_{01}}^2)z \bar{z} + a_{10}\bar{a}_{01} z^2 + a_{10}\sum_{i+ j \geq 2} \bar{a_{ij}} \bar{z}^{i}z^{j+1}\\ 
&+ a_{01}\bar{a}_{10}\bar{z}^2 +  a_{01}\sum_{i+ j \geq 2} \bar{a}_{ij}\bar{z}^{i+1} z^j + \bar{a}_{10}\sum_{i+ j \geq 2} a_{ij}z^{i} \bar{z}^{j+1}\\
 &+ \bar{a}_{01}\sum_{i+ j \geq 2} a_{ij}z^{i+1} \bar{z}^{j} + \sum_{\substack{i+j \geq 2 \\ k+l \geq 2}} a_{ij} \bar{a}_{kl} z^{i+k}\bar{z}^{j+l},
\end{align*}
and because $\abs{a_{10}}^2 = \abs{a_{01}}^2 + h$, and $w\bar{w} =g  z\bar{z}$, we see that 
 \begin{align*}
0 &= (h-g+2\abs{a_{01}}^2)z \bar{z} + a_{10}\bar{a}_{01} z^2 + a_{10}\sum_{i+ j \geq 2} \bar{a_{ij}} \bar{z}^{i}z^{j+1}\\ 
&+ a_{01}\bar{a}_{10}\bar{z}^2 +  a_{01}\sum_{i+ j \geq 2} \bar{a}_{ij}\bar{z}^{i+1} z^j + \bar{a}_{10}\sum_{i+ j \geq 2} a_{ij}z^{i} \bar{z}^{j+1}\\
 &+ \bar{a}_{01}\sum_{i+ j \geq 2} a_{ij}z^{i+1} \bar{z}^{j} + \sum_{\substack{i+j \geq 2 \\ k+l \geq 2}} a_{ij} \bar{a}_{kl} z^{i+k}\bar{z}^{j+l}.
\end{align*}
By expanding $h-g$ as a power series, and because $\rest{g}{D} = h$, we see that $h-g = \sum_{i+j \geq 1}b_{ij}z^i\bar{z}^j$. In conclusion we have obtained the following equality of power series:
 \begin{align*}
0 &= \left(\sum_{i+j \geq 1}b_{ij}z^i\bar{z}^j + 2\abs{a_{01}}^2\right)z \bar{z} + a_{10}\bar{a}_{01} z^2 + a_{10}\sum_{i+ j \geq 2} \bar{a_{ij}} \bar{z}^{i}z^{j+1}\\ 
&\qquad+ a_{01}\bar{a}_{10}\bar{z}^2 +  a_{01}\sum_{i+ j \geq 2} \bar{a}_{ij}\bar{z}^{i+1} z^j + \bar{a}_{10}\sum_{i+ j \geq 2} a_{ij}z^{i} \bar{z}^{j+1}\\
&\qquad+ \bar{a}_{01}\sum_{i+ j \geq 2} a_{ij}z^{i+1} \bar{z}^{j} + \sum_{\substack{i+j \geq 2 \\ k+l \geq 2}} a_{ij} \bar{a}_{kl} z^{i+k}\bar{z}^{j+l}.
\end{align*}
Therefore all the coefficients of this power series need to vanish. The term of degree 1 in $z$ and degree 1 in $\bar{z}$ is given by $2\abs{a_{01}}z\bar{z}$, hence we conclude that $a_{01} = 0$. The degree-$n$ term in $\bar{z}$ is given by $\bar{a}_{10}a_{0,n-1} \bar{z}^n$. Hence we can conclude that $\bar{a}_{0,i} = 0$ for all $i \geq 0$. Therefore the formal power series of $w$ is divisible by $z$, and we conclude that $w \in \inp{z}$. By symmetry we conclude that $\inp{w} = \inp {z}$, from which we conclude that $I_1 = I_2$ which finishes this part of the proof.\\
\textbf{Injectivity for global normal crossings}: Suppose that $I_{\abs{D}}= \otimes_i I_{\abs{D_i}}$ is a global normal crossing elliptic divisor. Let $I_{D^1}$ and $I_{D^2}$ be two complex log divisors which induce $I_{\abs{D}}$, which both need to be global normal crossing divisors because $I_{\abs{D}}$ is. Moreover, assume that they induce the same co-orientation on $C(1)$. This implies that they induce the same co-orientation on each of the zero loci $D_i$. After reordering, we may assume that
\begin{equation*}
 	I_{\abs{D^1_i}} = I_{\abs{D_i}} = I_{\abs{D^2_i}}.
\end{equation*}
Thus for each $i$, we have that $I_{\abs{D^1_i}}$ and $I_{\abs{D^2_i}}$ induce the same smooth elliptic divisor and the same co-orientation and therefore $I_{\abs{D^1_i}} = I_{\abs{D^2_i}}$ by the above, from which the result follows.\\
\textbf{Injectivity for general divisors}: Because $I_{\abs{D}}$ is locally a normal crossing of elliptic divisors, around every point $x \in D$ we can find an open neighbourhood $U$ such that $\rest{I_{\abs{D}}}{U}$ is a normal crossing of elliptic divisors. Therefore, if two general complex log divisors induce the same elliptic ideal and same co-orientation, we can use the injectivity for global normal crossing divisors and argue locally to prove that the complex log divisors must be isomorphic.\\
\textbf{Surjectivity for smooth divisors}: Let $I_{\abs{D}}$ be a smooth elliptic divisor and $\mathfrak{o}$ an orientation of $ND$. Given a choice of representative $f \in I_{\abs{D}}$, we can view $\Hess^{\nu}f \in \Gamma(S^2N^*D)$ as a metric on $N^*D$. We use this metric to transport the orientation of $ND$ to an orientation on $N^*D$. The orientation together with the metric induces a complex structure on $ND$, and hence a complex log divisor structure on $ND$ by Example \ref{ex:linebundledivisor}. This complex log divisor induces $I_{\abs{D}}$, which proves surjectivity for smooth divisors.\\
\textbf{Surjectivity for global normal crossings}: Let $I_{\abs{D}} = \otimes I_{\abs{D_i}}$ be a global normal crossing elliptic divisor. Because $\rest{NC(1)}{D_i \backslash D(2)} = \rest{ND_i}{D_i \backslash D(2)}$ and $D(2)$ is codimension two in $D_i$ we conclude that each $ND_i$ is orientable. Therefore, by the above, there exist complex log divisors $I_{D_i}$ such that $I_{\abs{D_i}} = I_{D_i} \otimes I_{\bar{D}_i}$. We conclude that 
\begin{equation*}
I_{\abs{D}}\otimes \cc = \otimes_i (I_{D_i} \otimes I_{\bar{D_i}}) = (\otimes_i I_{D_i}) \otimes \bar{(\otimes_i I_{D_i})}.
\end{equation*}
\textbf{Surjectivity for general divisors}: Because $I_{\abs{D}}$ is locally a global normal crossing of elliptic divisors, for every point $x \in D$ we can find an open neighbourhood $U$ such that $\rest{I_{\abs{D}}}{U}$ is a global normal crossing. Therefore, by the previous part there exists a complex log divisor $I_{D_U}$ which induces $\rest{I_{\abs{D}}}{U}$. Let $\mathcal{U}$ be an open cover of $M$, such that $I_{\abs{D}}$ is a global normal crossing on each open in the cover and construct complex log divisors inducing $I_{\abs{D}}$ on each of these opens. Let $U,U' \in \mathcal{U}$ and let $I_U$ and $I_{U'}$ be complex log divisors inducing $\rest{I_{\abs{D}}}{U}$ and $\rest{I_{\abs{D}}}{U}$. On the overlap $U\cap U'$ both $\rest{I_{U'}}{U\cap U'}$ and $\rest{I_U}{U\cap U'}$ induce the same elliptic ideal, by the above we therefore have $\rest{I_U}{U\cap U'} = \rest{I_{U'}}{U\cap U'}$. We conclude that the local complex log divisors glue to a global complex log divisor which induces $\rest{I_{\abs{D}}}{U}$, which finishes the proof.
\end{proof}
Motivated by this result we define the following.
\begin{definition}
An elliptic divisor $I_{\abs{D}}$ is \textbf{co-orientable} if $C(1)$ is co-orientable.
\end{definition}

\section{Lie algebroids associated to self-crossing divisors}\label{sec:algebroids}
In this section we introduce the Lie algebroids associated to self-crossing complex log and elliptic divisors. Much of this section follows along the same lines as \cite{CG17}. The Lie algebroids will be defined by imposing that their sections interact appropriately with the divisors, in that they must preserve the divisor ideals.
\subsection{Complex log tangent bundle}
\begin{lemma}\label{lem:logtan}
Let $I_D$ be a complex log divisor on a manifold $M$. The complex vector fields preserving the complex ideal $I_D$ are sections of a complex Lie algebroid $\mathcal{A}_D$.
\end{lemma}
\begin{proof}
By Lemma \ref{lem:loccomplexlogiso}, at every point there exist coordinates $(z_1,\ldots,z_n,x_i)$ such that locally $I_D = \inp{z_1\cdot\ldots\cdot z_n}$. One can readily check that in these coordinates the vector fields preserving $I_D$ are generated by
\begin{equation*}
\set{ z_1 \partial_{z_1},\partial_{\bar{z}_1}\ldots, z_n \partial_{z_n},\partial_{\bar{z}_n},\partial_{x_i}}
\end{equation*}
and therefore form a locally free sheaf which, by the Serre--Swan theorem, correspond to sections of a vector bundle, $\mathcal{A}_D$. The Lie bracket of vector fields induces a bracket on the sections of $\mathcal{A}_D$ making it into a Lie algebroid.
\end{proof}
\begin{definition} The Lie algebroid $\mathcal{A}_D$ is the \textbf{complex log tangent bundle} given by $I_D$.
\end{definition}
There is a local description of the complex log tangent bundle, whose proof is immediate.
\begin{lemma}\label{lem:complexlogfiberproduct}
Let $I_D$ be a complex log divisor on a manifold $M$, and let $I_{D_1},\ldots,I_{D_n}$ be a choice of local normal crossing on some open $U$. Then $\mathcal{A}_D$ is given by a repeated fiber product:
\begin{equation*}
\rest{\mathcal{A}_D}{U} \simeq \mathcal{A}_{D_1} \times_{T_{\cc}M} \times \cdots \times_{T_{\cc}M} \mathcal{A}_{D_n}.
\end{equation*}
\end{lemma}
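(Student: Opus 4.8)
Lemma \ref{lem:complexlogfiberproduct} asserts that, locally over an open set $U$ on which the complex log divisor $I_D$ admits a normal crossing decomposition $I_D = I_{D_1} \cdot \ldots \cdot I_{D_n}$, the complex log tangent bundle $\mathcal{A}_D$ is the iterated fiber product $\mathcal{A}_{D_1} \times_{T_\cc M} \cdots \times_{T_\cc M} \mathcal{A}_{D_n}$ of the smooth complex log tangent bundles of the individual factors, all taken over the complexified tangent bundle $T_\cc M$ via the respective anchor maps.

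My plan is to verify the identity at the level of the defining sheaves, since by Lemma \ref{lem:logtan} each of these bundles is characterized as the sheaf of complex vector fields preserving the corresponding ideal, and all of them sit inside $T_\cc M$ via their anchors. The fiber product $\mathcal{A}_{D_1} \times_{T_\cc M} \cdots \times_{T_\cc M} \mathcal{A}_{D_n}$ is, by definition of fiber product of vector bundles over $T_\cc M$, the subbundle of $T_\cc M$ whose sections are vector fields lying in the anchor image of every $\mathcal{A}_{D_i}$ simultaneously—that is, vector fields preserving each ideal $I_{D_i}$. So the whole content is the set-theoretic equality of sheaves
\begin{equation*}
\{ X \in T_\cc M : X \text{ preserves } I_{D_1} \cdot \ldots \cdot I_{D_n}\} = \bigcap_{i=1}^n \{ X \in T_\cc M : X \text{ preserves } I_{D_i}\},
\end{equation*}
after which the bundle and Lie algebroid structures match automatically because both sides inherit them as subalgebroids of $T_\cc M$.

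To establish this equality I would pass to the standard coordinates $(z_1,\ldots,z_n,x_i)$ guaranteed by Lemma \ref{lem:loccomplexlogiso}, in which $I_{D_i} = \inp{z_i}$ and $I_D = \inp{z_1 \cdots z_n}$. The inclusion $\supseteq$ is immediate: if $X$ preserves each $\inp{z_i}$ then $X(z_i) \in \inp{z_i}$ for every $i$, so $X(z_1\cdots z_n) = \sum_i z_1 \cdots X(z_i) \cdots z_n$ lies in $\inp{z_1 \cdots z_n}$ by the Leibniz rule. For the reverse inclusion $\subseteq$, I would expand a general vector field preserving $I_D$ in the frame $\{z_i \partial_{z_i}, \partial_{\bar z_i}, \partial_{x_i}\}$ from the proof of Lemma \ref{lem:logtan}, and check directly that each such generator already preserves every individual ideal $\inp{z_i}$; since this generating set is exactly the intersection condition, the two sheaves coincide. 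The iterated nature of the fiber product causes no difficulty, as intersection of subsheaves is associative.

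The main obstacle—really the only point requiring care—is justifying that the fiber product of these vector bundles over $T_\cc M$ is well-behaved, i.e.\ that the anchor maps of the $\mathcal{A}_{D_i}$ are transverse enough for the fiber product to be a smooth subbundle rather than merely a sheaf of possibly jumping rank. This is where the functional independence of the $I_{D_i}$ (Definition \ref{def:1ideal}) enters: in the standard coordinates the transversality is manifest, since the images of the anchors are the distributions spanned by the respective generating frames and these meet in constant rank. Once this is observed the proof is essentially a bookkeeping check, which is why the author describes it as immediate; I would simply record the coordinate computation and invoke functional independence to guarantee the fiber product has locally constant rank.
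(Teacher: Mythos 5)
Your proof is correct and is essentially the argument the paper intends: the paper states this lemma with "whose proof is immediate," and its proof of the analogous Proposition~\ref{prop:ellicommute} runs exactly along your lines --- transversality of the anchors in the coordinates of Lemma~\ref{lem:loccomplexlogiso}, identification of sections of the fiber product with vector fields preserving every $I_{D_i}$, and the coincidence of these with the preservers of the product ideal via the frame of Lemma~\ref{lem:logtan} together with the Leibniz rule. One wording caveat: the fiber product is not literally a subbundle of $T_{\cc}M$ (the anchors have kernel over $D$, so the fiberwise image drops rank there), but since each anchor is injective on sections, the sheaf-level identification that your argument actually uses is the correct one and the proof stands.
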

\subsection{Elliptic tangent bundle}
There is also a Lie algebroid associated to an elliptic divisor.
\begin{lemma}\label{lem:ellialg}
Let $I_{\abs{D}}$ be an elliptic divisor on a manifold $M$. The vector fields preserving the ideal $I_{\abs{D}}$ are sections of a Lie algebroid $\mathcal{A}_{\abs{D}}$.
\end{lemma}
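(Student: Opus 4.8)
The plan is to mirror the proof of Lemma \ref{lem:logtan}, replacing the complex normal form by the elliptic Morse--Bott normal form and the complex log generators by their elliptic analogues. The statement is local and $C^\infty(M)$-linear, so by the Serre--Swan theorem it suffices to show that the sheaf of ideal-preserving vector fields is locally free of rank $\dim M$; the Lie algebroid structure is then automatic, since the Leibniz rule shows that if $V,W$ satisfy $V(I_{\abs{D}}), W(I_{\abs{D}}) \subset I_{\abs{D}}$ then so does $[V,W]$, so that the Lie bracket of vector fields restricts to a bracket on sections and the inclusion into $TM$ serves as anchor.

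First I would invoke Lemma \ref{lem:locMB} to fix coordinates $(x_1,y_1,\dots,x_j,y_j,x_{2j+1},\dots,x_{2j+m})$ in which $I_{\abs{D}} = \inp{(x_1^2+y_1^2)\cdots(x_j^2+y_j^2)}$, and propose the candidate generators
\[
\set{ x_i\partial_{x_i}+y_i\partial_{y_i},\ x_i\partial_{y_i}-y_i\partial_{x_i}\ (1\le i \le j),\ \partial_{x_l}\ (2j< l \le 2j+m)}.
\]
Writing $f = \prod_i (x_i^2+y_i^2)$, a direct computation shows each preserves the ideal: the rotational fields $x_i\partial_{y_i}-y_i\partial_{x_i}$ and the free fields $\partial_{x_l}$ annihilate $f$, while each radial field $x_i\partial_{x_i}+y_i\partial_{y_i}$ sends $f$ to $2f$. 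There are exactly $2j+m=\dim M$ of them, and they are pointwise independent away from $D$, so what remains is to prove they generate \emph{all} ideal-preserving vector fields, in particular over $D$.

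This completeness is the heart of the matter, and I would establish it in two stages. The first stage decouples the factors: a vector field preserves the product ideal $\inp{f}$ \iff it preserves each smooth elliptic factor $\inp{x_i^2+y_i^2}$ separately. Preserving each factor clearly preserves the product; conversely, on the dense open set $M\setminus D(2)$ each point lies on at most one component, where $\inp{f}$ agrees with a single factor ideal, so a product-preserving field preserves that factor there, and the identity $V(x_i^2+y_i^2)=g_i(x_i^2+y_i^2)$ extends across $D(2)$ by continuity of the coefficient $g_i$. This reduces the problem to a single smooth elliptic divisor $\inp{x^2+y^2}$ on $\rr^2$, which is the content of \cite{CG17}. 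The second stage treats that case directly: if $V = a\partial_x + b\partial_y$ satisfies $2(ax+by)=V(x^2+y^2) \in \inp{x^2+y^2}$, then $ax+by = (x^2+y^2)g$ with $g$ smooth, and I would show $V = g(x\partial_x+y\partial_y) + \beta(x\partial_y - y\partial_x)$ with $\beta$ smooth. Setting $a' = a-gx$ and $b' = b-gy$, one checks $a'x+b'y = 0$, and the smoothness of $\beta = b'/x = -a'/y$ follows because $(x,y)$ is a regular sequence in $C^\infty(\rr^2)$ whose module of syzygies is generated by the Koszul relation $(-y,x)$.

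The main obstacle is precisely this last division step: unlike the holomorphic picture, the hypothesis only constrains the combination $ax+by$, and extracting a smooth coefficient in the rotational direction requires smooth-division input (a Hadamard/Malgrange-style argument of the same flavour used in Proposition \ref{prop:ellivscomplex}) rather than naive algebra. I also expect the decoupling argument to need care at the deepest stratum $D(j)$, where the continuity extension must be checked against the transversality condition of Remark \ref{rem:hessindep}. Once these are in place, the candidate set is a free generating set for the sheaf of ideal-preserving vector fields near every point, Serre--Swan produces the bundle $\mathcal{A}_{\abs{D}}$, and bracket-closure upgrades it to a Lie algebroid with anchor the inclusion into $TM$.
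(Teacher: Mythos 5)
Your proposal is correct and takes essentially the same route as the paper: the paper's proof likewise invokes Lemma \ref{lem:locMB} to put the divisor in the standard local form, asserts that the ideal-preserving vector fields are generated by $\set{r_1\partial_{r_1},\partial_{\theta_1},\ldots,r_n\partial_{r_n},\partial_{\theta_n},\partial_{x_l}}$ (your radial, rotational and free fields written in polar coordinates), and concludes via the locally-free-sheaf/Serre--Swan argument, so the decoupling and syzygy steps you supply are exactly the verification the paper leaves implicit. One remark on where the real difficulty sits: your final division step needs only the elementary Hadamard/Koszul argument, whereas the genuinely Malgrange-type smooth-division input is required at the point you flag in the decoupling stage, namely extending divisibility of $V(x_i^2+y_i^2)$ by $x_i^2+y_i^2$ across the higher strata $D(2),\ldots,D(j)$, where the unit-factor argument breaks down.
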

\begin{proof}
By Lemma \ref{lem:locMB} there exists coordinates $(r_1,\theta_1,\ldots,r_n,\theta_n,x_i)$ such that locally we have $I_{\abs{D}} = \inp{r_1^2\cdot\ldots\cdot r_n^2}$ and in which the vector fields preserving $I_{\abs{D}}$ are generated by
\begin{equation*}
\set{r_1\partial_{r_1},\partial_{\theta_1},\ldots,r_n\partial_{r_n},\partial_{\theta_n},\partial_{x_i}}.
\end{equation*}
This collection forms a locally free sheaf, hence there exists a Lie algebroid $\mathcal{A}_{\abs{D}}$ whose sections are the vector fields preserving $I_{\abs{D}}$.
\end{proof}
\begin{definition}\label{def:ellialg}
The Lie algebroid $\mathcal{A}_{\abs{D}}$ is the \textbf{elliptic tangent bundle} associated to $I_{\abs{D}}$.
\end{definition}

Similar to Lemma \ref{lem:complexlogfiberproduct} we have the following local description of the elliptic tangent bundle.
\begin{lemma}\label{lem:locfibproduct}
Let $I_{\abs{D}}$ be an elliptic divisor on a manifold $M$, and let $I_{\abs{D_1}},\ldots,I_{\abs{D_n}}$ be a choice of local normal crossing on some open $U$. Then $\mathcal{A}_{\abs{D}}$ is given by a repeated fiber product:
\begin{equation*}
\rest{\mathcal{A}_{\abs{D}}}{U} \simeq \mathcal{A}_{\abs{D_1}} \times_{TM} \times \cdots \times_{TM} \mathcal{A}_{\abs{D_n}}.
\end{equation*}
\end{lemma}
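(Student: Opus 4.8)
The plan is to mimic the (immediate) proof of Lemma \ref{lem:complexlogfiberproduct}: realize both sides as subsheaves of the smooth vector fields on $U$ via their anchor maps and check that these subsheaves coincide. Since the statement is local, by Lemma \ref{lem:locMB} I may work on $U$ in the standard coordinates $(x_1,y_1,\ldots,x_n,y_n,x_{n+1},\ldots,x_{n+m})$, writing $r_i^2 = x_i^2+y_i^2$, so that $I_{\abs{D_i}} = \inp{r_i^2}$ and $I_{\abs{D}} = \inp{r_1^2\cdots r_n^2}$.

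First I would record the relevant anchors. By Lemma \ref{lem:ellialg} applied to the single smooth divisor $I_{\abs{D_i}}$, the anchor identifies $\Gamma(\mathcal{A}_{\abs{D_i}})$ with the vector fields preserving $\inp{r_i^2}$; in particular the anchor is injective on sections, with local frame $\set{r_i\partial_{r_i}, \partial_{\theta_i}}$ together with the coordinate fields along all remaining directions. Consequently a section of the fiber product $\mathcal{A}_{\abs{D_1}}\times_{TM}\cdots\times_{TM}\mathcal{A}_{\abs{D_n}}$ is, through the anchors, nothing but a single vector field $V$ that simultaneously preserves every factor ideal $I_{\abs{D_i}}$: injectivity forces the $i$-th component to be the unique lift of $V$, and such a lift exists precisely when $V$ preserves $I_{\abs{D_i}}$. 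Thus the sheaf of sections of the fiber product is identified with the vector fields preserving \emph{all} the $I_{\abs{D_i}}$, while $\Gamma(\mathcal{A}_{\abs{D}})$ is by definition the vector fields preserving the product $I_{\abs{D}} = \prod_i I_{\abs{D_i}}$.

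It then remains to prove the equivalence that $V$ preserves $\prod_i\inp{r_i^2}$ \iff\ $V$ preserves each $\inp{r_i^2}$. One implication is the Leibniz rule: if $V(r_i^2)\in\inp{r_i^2}$ for every $i$, then $V(\prod_i r_i^2) = \sum_i V(r_i^2)\prod_{j\neq i} r_j^2 \in \inp{\prod_i r_i^2}$. For the converse I would avoid an ideal-membership (division) argument and instead read it off the explicit frame of $\mathcal{A}_{\abs{D}}$ furnished by Lemma \ref{lem:ellialg}, namely $\set{r_i\partial_{r_i}, \partial_{\theta_i}, \partial_{x_\ell}}$: each generator manifestly preserves every $\inp{r_j^2}$, since $r_i\partial_{r_i}(r_j^2)$ equals $2r_i^2$ when $j=i$ and $0$ otherwise, while $\partial_{\theta_i}$ and $\partial_{x_\ell}$ annihilate every $r_j^2$. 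As these generators span $\mathcal{A}_{\abs{D}}$ over $C^\infty(U)$, every section of $\mathcal{A}_{\abs{D}}$ preserves each factor, giving the remaining inclusion.

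Finally I would assemble these identifications into the stated isomorphism. The two constructions define mutually inverse maps between $\mathcal{A}_{\abs{D}}$ and the fiber product which act as the identity on the underlying vector field (through the anchors); since each anchor is injective on sections, these maps automatically intertwine the Lie brackets, so the isomorphism is one of Lie algebroids over $U$, and the fiber product is in particular a genuine vector bundle of rank $\dim M$. I expect the only delicate point to be the converse of the ideal equivalence: the naive route is to divide $V(r_i^2)$ by $r_i^2$, which would invoke a Malgrange/\L ojasiewicz-type division result, whereas passing through the explicit frame of Lemma \ref{lem:ellialg} renders the step routine and keeps the argument ``immediate'' in the spirit of Lemma \ref{lem:complexlogfiberproduct}.
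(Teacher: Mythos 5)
Your reduction to standard coordinates via Lemma \ref{lem:locMB}, the identification of sections of both sides with vector fields on $U$ through the anchors, and the key equivalence ``$V$ preserves $\inp{r_1^2\cdots r_n^2}$ if and only if $V$ preserves every $\inp{r_i^2}$'' (Leibniz in one direction, the explicit frame $\set{r_i\partial_{r_i},\partial_{\theta_i},\partial_{x_\ell}}$ of Lemma \ref{lem:ellialg} in the other) are all correct, and this is essentially the argument the paper regards as immediate; the frame trick is indeed the right way to avoid a division argument.

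There is, however, one inference at the end that does not hold as stated: you cannot conclude that ``the fiber product is in particular a genuine vector bundle of rank $\dim M$'' from the fact that the matched tuples of sections form a sheaf isomorphic to $\Gamma(\mathcal{A}_{\abs{D}})$. The set-theoretic fiber product inside $\mathcal{A}_{\abs{D_1}}\oplus\cdots\oplus\mathcal{A}_{\abs{D_n}}$ can fail to be a bundle even when the section-level fiber product is locally free: take $M=\rr$ and $A_1=A_2$ the rank-one Lie algebroid whose sections are generated by $x\partial_x$. Matched pairs of sections $(fx\partial_x,gx\partial_x)$ force $f=g$, so the sheaf-level fiber product is locally free of rank one, yet both anchors vanish over $x=0$, so the set-theoretic fiber product has a two-dimensional fiber there and is not a vector bundle. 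What saves the elliptic case is precisely the transversality of the $D_i$, which your argument never invokes pointwise: in the standard coordinates, at a point $p$ lying on exactly $k$ of the loci, the image of the anchor of each $\mathcal{A}_{\abs{D_{i_l}}}$ through $p$ is $T_pD_{i_l}$ and its kernel is two-dimensional, so every fiber of the set-theoretic fiber product has dimension $(\dim M-2k)+2k=\dim M$; hence it is the kernel of a constant-rank bundle map $\bigoplus_i\mathcal{A}_{\abs{D_i}}\rightarrow TM^{\oplus(n-1)}$, therefore a subbundle whose smooth sections are exactly your matched tuples. This is the same preliminary check the paper performs in Proposition \ref{prop:ellicommute}, where transversality of the anchors is verified before the fiber product is formed. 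With that one check added, your proof is complete.
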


Above we argued that the ideal $I_{|D|}$ determines the elliptic tangent bundle. The converse is also true. Namely, suppose we are given any Lie algebroid $L \to M$ whose rank agrees with the dimension of $M$. The anchor map, $\rho$, induces a bundle map $\det_\rho\colon \wedge^n L\to \wedge^n TM$, which can be regarded as a section of the real line bundle $\wedge^n L^* \otimes \wedge^n TM$. That is, $L$ determines the real divisor $(\wedge^n L^* \otimes \wedge^n TM, \det_\rho)$. Given the local expression for generators of $\elli$ we have:

\begin{lemma}\label{lem:ideal}
The elliptic tangent bundle determines its underlying ideal.
\end{lemma}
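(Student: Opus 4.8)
The plan is to recognize that this is a purely local statement about ideals and to verify it in the normal form supplied by Lemma \ref{lem:locMB}. Write $\rho\colon \mathcal{A}_{\abs{D}} \to TM$ for the anchor, so that the divisor induced by the elliptic tangent bundle is $(\wedge^{\dim M}\mathcal{A}_{\abs{D}}^*\otimes \wedge^{\dim M}TM, \det_\rho)$. What must be shown is that the ideal $I_{\det_\rho}$ generated by the section $\det_\rho$ coincides with $I_{\abs{D}}$. Since an ideal is determined by its localizations, it suffices to compute $\det_\rho$ in the standard coordinates around an arbitrary point, and the conclusion will then hold globally.

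First I would fix a point of intersection number $k$ and, by Lemma \ref{lem:locMB}, choose coordinates $(x_1,y_1,\ldots,x_k,y_k,x_{2k+1},\ldots)$ in which $I_{\abs{D}} = \inp{r_1^2\cdot\ldots\cdot r_k^2}$ with $r_l^2 = x_l^2+y_l^2$. By Lemma \ref{lem:ellialg} the elliptic tangent bundle carries the local frame $\set{r_1\partial_{r_1},\partial_{\theta_1},\ldots,r_k\partial_{r_k},\partial_{\theta_k},\partial_{x_i}}$, and $\rho$ sends this frame to the corresponding tuple of vector fields. The key step is to re-express each planar pair in the smooth Cartesian coordinates: since $r_l\partial_{r_l}=x_l\partial_{x_l}+y_l\partial_{y_l}$ is the Euler field and $\partial_{\theta_l}=-y_l\partial_{x_l}+x_l\partial_{y_l}$ is the rotation field, a direct wedge computation gives
\begin{equation*}
\rho(r_l\partial_{r_l})\wedge \rho(\partial_{\theta_l}) = (x_l^2+y_l^2)\,\partial_{x_l}\wedge\partial_{y_l} = r_l^2\,\partial_{x_l}\wedge\partial_{y_l}.
\end{equation*}
Wedging these over all $l$ together with the $\partial_{x_i}$, one finds that $\det_\rho$ carries the frame $r_1\partial_{r_1}\wedge\partial_{\theta_1}\wedge\cdots\wedge\partial_{x_i}$ of $\wedge^{\dim M}\mathcal{A}_{\abs{D}}$ to $r_1^2\cdots r_k^2$ times the nonvanishing frame $\partial_{x_1}\wedge\partial_{y_1}\wedge\cdots\wedge\partial_{x_i}$ of $\wedge^{\dim M}TM$. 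Hence $\det_\rho$ locally generates $\inp{r_1^2\cdot\ldots\cdot r_k^2}=I_{\abs{D}}$; because the target frame is nonvanishing and $\det_\rho$ is built intrinsically from $\rho$, these local identities of ideals agree on overlaps and patch to the global equality $I_{\det_\rho}=I_{\abs{D}}$.

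The only delicate point, and the thing to flag rather than to belabor, is that polar coordinates are singular along $r_l=0$: one must observe that the frame sections $r_l\partial_{r_l}$ and $\partial_{\theta_l}$ are genuinely smooth vector fields (the Euler and rotation fields), and that it is exactly their smooth Cartesian wedge that makes the vanishing order $r_l^2$ rather than $r_l$ manifest, in agreement with the convention that the elliptic divisor is generated by $r_l^2=x_l^2+y_l^2$. Everything else is a routine determinant computation in the normal form.
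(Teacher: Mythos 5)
Your proposal is correct and is essentially the paper's own argument: the paper sets up the divisor $(\wedge^n \mathcal{A}_{\abs{D}}^* \otimes \wedge^n TM, \det_\rho)$ and then asserts the lemma as immediate ``given the local expression for generators of $\elli$'', which is exactly the computation you carry out. Your verification that $\rho(r_l\partial_{r_l})\wedge\rho(\partial_{\theta_l}) = r_l^2\,\partial_{x_l}\wedge\partial_{y_l}$ in the normal form of Lemma \ref{lem:locMB}, so that $\det_\rho$ locally generates $\inp{r_1^2\cdots r_k^2} = I_{\abs{D}}$, simply fills in the detail the paper leaves implicit.
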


When an elliptic divisor is induced from a complex log divisor we can relate the Lie algebroids.

\begin{proposition}\label{prop:ellicommute}
Let $I_D$ be a complex log divisor, and let $I_{\abs{D}}$ be the induced elliptic divisor. Then
\begin{equation*}
\mathcal{A}_D \times_{T_{\cc}M} \mathcal{A}_{\bar{D}} = \elli \otimes \cc.
\end{equation*}
\end{proposition}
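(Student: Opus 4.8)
The plan is to identify both sides as the \emph{same} subsheaf of the complexified tangent bundle $T_{\cc}M$ by means of their anchor maps. Both $\cplx \times_{T_{\cc}M} \mathcal{A}_{\bar{D}}$ and $\elli \otimes \cc$ carry anchors into $T_{\cc}M$ which are isomorphisms away from $D$. Since $D$ is nowhere dense, the induced maps on sections are injective as morphisms of sheaves, so each algebroid is recovered, \emph{together with its bracket}, from the subsheaf of $T_{\cc}M$ cut out by the image of its anchor. The whole statement therefore reduces to showing that these two image subsheaves coincide and that the resulting identification is a Lie algebroid isomorphism.

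First I would describe the two image sheaves. By definition of the elliptic tangent bundle, the anchor image of $\elli \otimes \cc$ is the sheaf of complex vector fields preserving $I_{\abs{D}} \otimes \cc = I_D \cdot \bar{I}_D$. A section of the fiber product is a pair $(a,b)$ with $\rho_D(a) = \rho_{\bar{D}}(b)$, so its anchor image is the intersection of $\set{V : V \text{ preserves } I_D}$ with $\set{V : V \text{ preserves } \bar{I}_D}$, i.e.\ the vector fields preserving both $I_D$ and $\bar{I}_D$. Thus the crux is the purely local claim that a complex vector field preserves the product ideal $I_D \cdot \bar{I}_D$ if and only if it preserves $I_D$ and $\bar{I}_D$ separately. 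By Lemma~\ref{lem:loccomplexlogiso} I may assume $I_D = \inp{z_1\cdots z_n}$, where this follows at once from the generators already recorded: by (the proof of) Lemma~\ref{lem:logtan} the fields preserving $I_D$ are generated by $\set{z_k\partial_{z_k}, \partial_{\bar{z}_k}, \partial_{x_i}}$ and those preserving $\bar{I}_D$ by $\set{\partial_{z_k}, \bar{z}_k\partial_{\bar{z}_k}, \partial_{x_i}}$, so comparing coefficients shows their intersection is generated by $\set{z_k\partial_{z_k}, \bar{z}_k\partial_{\bar{z}_k}, \partial_{x_i}}$. On the other side Lemma~\ref{lem:ellialg} gives generators $\set{r_k\partial_{r_k}, \partial_{\theta_k}, \partial_{x_i}}$ for the fields preserving $I_{\abs{D}} = \inp{r_1^2\cdots r_n^2}$, and since $r_k\partial_{r_k} = z_k\partial_{z_k} + \bar{z}_k\partial_{\bar{z}_k}$ and $\partial_{\theta_k} = i(z_k\partial_{z_k} - \bar{z}_k\partial_{\bar{z}_k})$, after complexifying these span exactly the same module. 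Hence the two image sheaves agree. (Alternatively one can run this through the local fiber-product descriptions of Lemmas~\ref{lem:complexlogfiberproduct} and \ref{lem:locfibproduct} together with commutativity of fiber products over $T_{\cc}M$, reducing to the single-crossing case $n=1$.)

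It then remains to confirm that the fiber product is genuinely a vector bundle and that the anchor-induced identification respects the brackets. For the former I would check that the combined map $\cplx \oplus \mathcal{A}_{\bar{D}} \to T_{\cc}M$, $(a,b) \mapsto \rho_D(a) - \rho_{\bar{D}}(b)$, stays surjective along $D$ (at $z_k = 0$ the images of $\rho_D$ and $\rho_{\bar{D}}$ drop the $\partial_{z_k}$ and $\partial_{\bar{z}_k}$ directions respectively, but together still span $T_{\cc}M$), so its kernel is a subbundle of constant rank equal to that of $T_{\cc}M$. Bracket compatibility is then automatic: the identification commutes with anchors, so on the dense set $M \setminus D$ where the anchors are injective the two sides of $\varphi[a_1,a_2] = [\varphi a_1, \varphi a_2]$ have equal anchor and hence agree, and this extends by continuity. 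I expect the main obstacle to be bookkeeping rather than conceptual, namely making the ``generically injective anchor determines the algebroid'' principle precise and verifying surjectivity of the combined anchor along the crossing strata so that the fiber product is a well-defined bundle; once these are in place the equality of image sheaves carries the argument.
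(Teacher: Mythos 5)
Your proposal is correct and follows essentially the same route as the paper: the paper likewise forms the fiber product via transversality of the anchors, identifies its sections under the anchor with $\rho(\Gamma(\mathcal{A}_D)) \cap \rho(\Gamma(\mathcal{A}_{\bar{D}}))$, and equates this with the vector fields preserving $I_D \otimes \bar{I}_D = I_{\abs{D}} \otimes \cc$. The only difference is one of detail: your explicit generator computation in the coordinates of Lemma~\ref{lem:loccomplexlogiso}, the surjectivity check along $D$, and the bracket-compatibility argument spell out steps that the paper's shorter proof asserts without elaboration.
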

\begin{proof}
Using local coordinates as in Lemma \ref{lem:loccomplexlogiso}, we see that the anchors of $\mathcal{A}_D$ and $\mathcal{A}_{\bar{D}}$ are transverse and hence we can form their fiber product. The anchor maps on sections, mapping to $\Gamma(T_\mathbb{C} M)$, satisfy
\begin{align*}
\rho(\Gamma(\mathcal{A}_D \times_{T_{\cc}M} \mathcal{A}_{\bar{D}})) = \rho(\Gamma(\mathcal{A}_D)) \cap_{T_{\cc}M} \rho(\Gamma(\mathcal{A}_{\bar{D}})).
\end{align*}
The left-hand side consists of the vector fields preserving $I_D \otimes \bar{I}_{D} = I_{\abs{D}}\otimes \cc$. Therefore $\mathcal{A}_D \times_{T_{\cc}M} \mathcal{A}_{\bar{D}}$ is isomorphic to the complexification of the Lie algebroid from Definition~\ref{def:ellialg}.
\end{proof}
Our next step is to compute the cohomology of the complex log tangent bundle. To do that we need the following topological result regarding divisors with normal crossings.
\begin{lemma}\label{lem:homotopytype}
If $D$ is the standard normal crossing complex log (or elliptic) divisor with intersection number $n$, then $C(i)$ is homotopic to the disjoint union of $\binom{n}{i}$ copies of $T^{n-i}$.
\end{lemma}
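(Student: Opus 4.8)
\textbf{Proof plan for Lemma \ref{lem:homotopytype}.}

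The plan is to reduce the computation to the explicit standard model and exploit the product structure of the stratification. By Lemma \ref{lem:loccomplexlogiso} (respectively Lemma \ref{lem:locMB}) the standard normal crossing divisor with intersection number $n$ lives on $\cc^n \times \rr^m$ (respectively $\rr^{2n}\times \rr^m$), with the $i$-th smooth factor cutting out the hyperplane $\set{z_i = 0}$. The key observation is that the $\rr^m$ factor contributes nothing to the homotopy type, so I may as well work on $\cc^n$ (the elliptic case being homeomorphic to the complex case stratum-by-stratum, since $r_i^2 = \abs{z_i}^2$ defines the same vanishing loci). First I would describe $D(j)$ combinatorially: a point of $\cc^n$ has intersection number exactly $i$ precisely when exactly $i$ of its coordinates $z_1,\ldots,z_n$ vanish. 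Hence
\begin{equation*}
C(i) = \set{(z_1,\ldots,z_n) \in \cc^n : \text{exactly } i \text{ coordinates vanish}} = \bigsqcup_{\abs{S}=i} \Big(\prod_{k \in S}\set{0} \times \prod_{k \notin S}\cc^*\Big),
\end{equation*}
where $S$ ranges over the $i$-element subsets of $\set{1,\ldots,n}$.

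The next step is the count and the homotopy identification. There are exactly $\binom{n}{i}$ such subsets $S$, giving $\binom{n}{i}$ disjoint pieces. Each piece is a product of $n-i$ copies of $\cc^* = \cc \setminus \set{0}$, and $\cc^*$ deformation retracts onto the unit circle $S^1 = T^1$ via the radial retraction $z \mapsto z/\abs{z}$. Therefore each piece deformation retracts onto $(S^1)^{n-i} = T^{n-i}$. Taking the disjoint union and noting that a deformation retraction on each open-and-closed piece assembles into a deformation retraction of the whole, I conclude that $C(i)$ is homotopy equivalent to $\binom{n}{i}$ disjoint copies of $T^{n-i}$, as claimed. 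The elliptic case follows identically: in the coordinates of Lemma \ref{lem:locMB} the stratum $C(i)$ is the same subset of $\rr^{2n}\times\rr^m$, with each surviving factor $\set{x_k^2+y_k^2 \neq 0} \cong \cc^*$ retracting onto a circle.

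I do not expect any serious obstacle here; the statement is essentially a bookkeeping exercise once the standard local form is in hand. The only point requiring a little care is the bijection between the data ``intersection number exactly $i$'' and ``exactly $i$ vanishing coordinates'', which relies on the functional independence of the local generators guaranteed by Definitions \ref{def:1ideal} and \ref{def:elli} (so that no stratum is accidentally smaller or larger than the naive count). Everything else—the binomial count, the retraction of $\cc^*$ to $S^1$, and the irrelevance of the $\rr^m$ factor—is routine.
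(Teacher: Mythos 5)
Your proposal is correct and follows essentially the same route as the paper: both decompose $C(i)$ into the $\binom{n}{i}$ pieces indexed by which $i$ coordinates vanish, identify each piece (up to the contractible factors) with $(\cc^*)^{n-i}$, and retract onto $T^{n-i}$. Your explicit remarks about the $\rr^m$ factor and the identification of the elliptic with the complex case are harmless additions that the paper leaves implicit.
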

\begin{proof}
Let $I_{D_1},\ldots,I_{D_n}$ be the complex log divisors corresponding to $z_1,\ldots,z_n$ respectively. Then
\begin{align*}
C(i) = D(i)\backslash D(i+1) = ~~~~\bigsqcup_{(j_1,\ldots,j_i) \subset (1,\ldots,n)} (D_{j_1}\cap \cdots \cap D_{j_i})\backslash D(i+1).
\end{align*}
All the components in $C(i)$ are diffeomorphic. For notational clarity we will thus consider the component corresponding to the ordered multi-index $I := (1,2,\ldots,i)$. We have
\begin{align*}
D_I := D_1 \cap \cdots \cap D_i = \underbrace{\set{0} \times \cdots \times \set{0}}_{i-\text{times}} \times \underbrace{\cc \times \cdots \times \cc}_{(n-i)-\text{times}}.
\end{align*}
To proceed we consider the intersection of $D_I$ with $D(i+1)$. To write this down, for $k \in \mathbb{N}$ let
\begin{equation*}
		\textbf{0}_k := \underbrace{\set{0} \times \cdots \times \set{0}}_{k-\text{times}}, \qquad\qquad \cc_k := \underbrace{\cc \times \cdots \times \cc}_{k-\text{times}}.
\end{equation*}
With this notation in hand, one readily verifies that
\begin{equation*}
D_I \cap D(i+1) = \textbf{0}_i \times \set{0} \times \cc_{n-i-1} \cup \textbf{0}_i \times \cc \times \set{0} \times \cc_{n-i-2} \cup \cdots \cup \textbf{0}_i \times \cc_{n-i-1} \times \set{0}.
\end{equation*}
From this we immediately see that
\begin{equation*}
D_I \backslash  D(i+1) = \underbrace{\set{0} \times \cdots \times \set{0}}_{i-\text{times}} \times \underbrace{\cc^* \times \cdots \times \cc^*}_{(n-i)-\text{times}},
\end{equation*}
which is homotopic to $T^{n-i}$. Therefore, all components of $C(i)$ are homotopic to $T^{n-i}$, and as there are $\binom{n}{i}$ of these this finishes the proof.
\end{proof}
The following result regarding cohomology is the self-crossing analogue of \cite[Theorem 1.3]{CG17}, and appears in \cite{Del71} in the algebraic context.
\begin{theorem}\label{thm:complexcom}
Let $D = (L,\sigma)$ be a complex log divisor on a manifold $M$. Then the inclusion $\iota\colon M\backslash D \hookrightarrow M$ induces an isomorphism
\begin{align*}
H^k(M,\mathcal{A}_D) \simeq H^k(M\backslash D,\cc).
\end{align*}
\end{theorem}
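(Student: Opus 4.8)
The plan is to realize both sides as hypercohomology of complexes of sheaves on $M$, exhibit a morphism between them induced by restriction of forms, and then verify that it is a quasi-isomorphism by a purely local computation on the standard model. First I would observe that the sheaf $U \mapsto \Omega^\bullet(\mathcal{A}_D)(U)$ of $\mathcal{A}_D$-forms is a complex of $C^\infty$-modules, hence fine and therefore soft; consequently $H^k(M,\mathcal{A}_D)$ is computed by the complex of global sections and equals the hypercohomology $\mathbb{H}^k(M,\Omega^\bullet(\mathcal{A}_D))$. Over $M\backslash D$ the anchor is an isomorphism, so $\mathcal{A}_D|_{M\backslash D} = T_{\cc}M$ and the algebroid de Rham complex there is just the ordinary complex-valued de Rham complex. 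Restriction of forms therefore defines a morphism of complexes of sheaves $\Omega^\bullet(\mathcal{A}_D) \to R\iota_*\underline{\cc}_{M\backslash D}$, which is the map inducing $\iota^*$; since $H^k(M\backslash D,\cc) = \mathbb{H}^k(M, R\iota_*\underline{\cc}_{M\backslash D})$, it suffices to prove this morphism is a quasi-isomorphism, and this can be checked on stalks, i.e. on arbitrarily small neighbourhoods of each point.

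By Lemma \ref{lem:loccomplexlogiso} it is enough to treat the standard model $U = \cc^j\times\rr^m$ with $I_D = \inp{z_1\cdots z_j}$, where $U\backslash D = (\cc^*)^j\times\rr^m$ is homotopy equivalent to $T^j$ (the case $i=0$ of Lemma \ref{lem:homotopytype}), so that $H^\bullet(U\backslash D,\cc) = \wedge^\bullet\cc^j$. In this model the generators of $\mathcal{A}_D^*$ are $dz_i/z_i$ and $d\bar{z}_i$ for $i\leq j$ together with the real $dx_l$, and the fiber-product description of Lemma \ref{lem:complexlogfiberproduct} lets me factor the algebroid de Rham complex as the tensor product of the $j$ single-variable log complexes with the ordinary de Rham complex of $\rr^m$. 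A Künneth argument then reduces the computation to the single smooth divisor case, which is \cite[Theorem 1.3]{CG17}: for one variable $H^\bullet(\mathcal{A}_{D_i})(\cc) = \cc \oplus \cc\,[dz_i/z_i]$, matching $H^\bullet(\cc^*,\cc)$. Taking the product yields $H^\bullet(\Omega^\bullet(\mathcal{A}_D)(U)) = \wedge^\bullet\inp{[dz_1/z_1],\ldots,[dz_j/z_j]}$, and the restriction map carries $[dz_i/z_i]$ to the generator of the $i$-th circle factor of $U\backslash D$, so the morphism is an isomorphism on the local model.

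Having established the local quasi-isomorphism, I would conclude globally: a morphism of complexes of sheaves which is a quasi-isomorphism on every stalk is a quasi-isomorphism, hence induces an isomorphism on hypercohomology, giving $H^k(M,\mathcal{A}_D) \simeq H^k(M\backslash D,\cc)$; equivalently one can run a Mayer--Vietoris induction over a good cover adapted to the intersection stratification. The main obstacle is the local computation: making the Künneth decomposition precise at the level of these algebroid de Rham complexes, so that the cohomology of the tensor-product complex really is the tensor product of the factors' cohomologies, and simultaneously verifying that the resulting local isomorphisms are genuinely induced by $\iota^*$ so that they patch together into the global morphism. This can be handled either via the flatness implicit in the fiber-product structure of Lemma \ref{lem:complexlogfiberproduct} or by writing down an explicit homotopy operator proving a logarithmic Poincaré lemma in each variable; the residue calculation matching $[dz_i/z_i]$ with the circle generator is what ties the algebraic computation to the topology of $U\backslash D$.
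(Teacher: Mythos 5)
Your proposal is correct and follows essentially the same route as the paper: the paper likewise argues ``in the spirit of Deligne'' by reducing to a stalk-wise statement about sheaves, identifying $\Omega^{\bullet}(M\backslash D)$ with its pushforward $\iota_*\Omega^{\bullet}(M\backslash D)$, and then checking on the standard local model of Lemma \ref{lem:loccomplexlogiso} that $H^\bullet(U,\mathcal{A}_D)$ is the free algebra on $d\log z_1,\ldots,d\log z_j$, whose generators are carried by $\iota^*$ to the generators of $H^\bullet(U\backslash D) \simeq H^\bullet(T^j)$ via Lemma \ref{lem:homotopytype}. The only difference is one of detail: the paper asserts the local cohomology computation outright, whereas you propose to justify it by a K\"unneth reduction to the smooth case of \cite[Theorem 1.3]{CG17} or by an explicit logarithmic Poincar\'e lemma --- exactly the gap you correctly flag as the main obstacle, and either of your suggested fixes is a legitimate way to close it.
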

\begin{proof}
We give an argument in the same spirit as \cite{Del71}. As is shown there it suffices to show that $\iota$ induces an isomorphism on the level of sheaf cohomology. Below we will implicitly identify the sheaf $\Omega^{\bullet}(M\backslash D)$ with its push-forward $\iota_*(\Omega^{\bullet}(M\backslash D))$. Given a point $p \in M \backslash D$ and a contractible neighbourhood $U$ of $p$ which is disjoint from $D$ we have that $\mathcal{A}_D = TD$ and hence $H^k(U,\mathcal{A}_D) = H^k(U\backslash D)$. Next, for any $j$ less than or equal to the intersection number of $D$, let $p \in C(j)$. Let $z_1,\ldots,z_j$ be coordinates on an open neighbourhood $U$ of $p$ as in Lemma \ref{lem:loccomplexlogiso}. In these coordinates we see that $H^\bullet(U,\mathcal{A}_D)$ is the free algebra generated by $\set{1,d\log z_1,\ldots, d\log z_j}$. By Lemma \ref{lem:homotopytype} we have that $U\backslash D$ is homotopic to $\mathbb{T}^j$. Under these identifications, the cochain morphism $\iota^*$ takes the generators of $H^\bullet(U,\mathcal{A}_D)$ to the generators of $H^{\bullet}(U\backslash D)$. Therefore we conclude that $\iota^*$ is a local isomorphism and hence also globally, which finishes the proof.
\end{proof}
\subsection{Residue maps}\label{sec:residues}
Let $(I_{|D|},\mathfrak{o})$ be a co-oriented smooth elliptic divisor on a manifold $M$. The restriction of the smooth elliptic tangent bundle to $D$ fits into a sequence of Lie algebroids
\begin{equation*}
0 \rightarrow \rest{\ker \rho}{D} \rightarrow \rest{\Ad}{D} \rightarrow TD \rightarrow 0.
\end{equation*}
In \cite{CG17} it is explained that this sequence induces a cochain map 
\begin{equation*}
\Res_q\colon \Omega^{\bullet}(\elli) \rightarrow \Omega^{\bullet-2}(TD),
\end{equation*}
called the elliptic residue. In local Morse--Bott coordinates for the divisor, the elliptic residue map is given by
\begin{equation*}
\Res_q(\alpha) = \iota^*_D(\iota_{r \partial_r} \iota_{\partial_\theta} \alpha), \qquad \alpha \in \Omega^\bullet(\mathcal{A}_{|D|}).
\end{equation*}
\begin{definition}
Let $(I_{\abs{D}},\frak{o})$ be a co-oriented elliptic divisor, and let $\alpha \in \Omega^\bullet(\elli)$. Its \textbf{elliptic residue} is defined by 
\begin{equation*}
\Res_q(\alpha) := \Res_q(\iota^*_{M\backslash D(2)}\alpha),
\end{equation*}
where the right-hand side is the elliptic residue for the smooth elliptic divisor $\rest{I_{\abs{D}}}{M\setminus D(2)}$.
\end{definition}
There are more residue maps associated to self-crossing elliptic divisors, but for the purpose of the present paper we will only briefly mention the ones we need.
\begin{definition}
Let $(I_{\abs{D}},\frak{o})$ be a co-oriented elliptic divisor and let $\omega \in \Omega^2(\elli)$. Consider oriented coordinates around a point $p \in D(k)$ with $k \geq 2$ as in Lemma~\ref{lem:locMB}, and define three types of residues:
\begin{equation*}
\Res_{r_ir_j}\omega(p) := \omega_p(r_i\partial_{r_i},r_j\partial_{r_j}),~ \Res_{r_i\theta_j}\omega(p) := \omega_p(r_i\partial_{r_i},\partial_{\theta_j}),~ \Res_{\theta_i\theta_j}\omega(p) := \omega_p(\partial_{\theta_i},\partial_{\theta_j}). \qedhere
\end{equation*}
\end{definition}
These expressions do not depend on the chosen coordinates. They do depend on the choice of co-orientation and the particular ordering of the divisors, but only up to signs.

\section{Self-crossing stable generalized complex structures}\label{sec:generalizedcomplex}
In this section we will start our discussion of generalized complex geometry, and self-crossing stable generalized complex structures in particular. We will first recall the basics of generalized complex geometry, before defining the (self-crossing) stable condition, using the divisors of Section~\ref{sec:divisors}. We then show how these can be described using symplectic-like forms in the complex log tangent bundle (Theorem~\ref{th:correspondence}), in analogy with \cite{CG17}. Next we discuss that these are in fact symplectic structures in its associated elliptic tangent bundle, satisfying certain additional cohomological conditions (Theorem~\ref{th:correspondence2}). From that point onwards we can proceed to study self-crossing stable generalized complex structures using symplectic techniques.
\subsection{Generalized complex structures}
Generalized geometry refers to the study of geometric structures on $\mathbb{T}M:= TM \oplus T^*M$, for a manifold $M$. We briefly recall the notions from generalized complex geometry which are needed in this paper. For a more in depth discussion see \cite{Gua07}. 
\begin{definition}
A \textbf{generalized complex structure} on a manifold $M$ is a pair $(\mathbb{J},H)$, where $H\in \Omega^3(M) $ is a closed three-form and $\mathbb{J}$ is an endomorphism of $\mathbb{T}M$ for which $\mathbb{J}^2 = -\text{Id}$ and the $+i$-eigenbundle $L \subset (\mathbb{T}M)\otimes \cc$ is involutive with respect to the Dorfman bracket:
\begin{equation*}
[\![X+\xi,Y+\eta]\!]_H := [X,Y] + \mathcal{L}_X\eta - \iota_Yd\xi + \iota_X\iota_Y H, \qquad X+\xi, Y+\eta \in \Gamma(\mathbb{T}M).\qedhere
\end{equation*}
\end{definition}
Two generalized complex structures $(\mathbb{J},H)$ and  $(\mathbb{J}',H')$ are \textbf{gauge equivalent} if there exists $B \in \Omega^2(M)$ such that $H' = H + dB$ and, using the associated map $B^\flat\colon TM \to T^*M$, we have
\begin{equation*}
	\mathbb{J}' = \left(\begin{smallmatrix}
		1 & B^{\flat} \\
		0 & 0
	\end{smallmatrix}
	\right)
	\mathbb{J}
	\left(
	\begin{smallmatrix}
		1 & -B^{\flat} \\
		0 & 0
	\end{smallmatrix}
	\right).
\end{equation*}
Given an element $X + \xi \in \mathbb{T}M$, let $(X+\xi) \cdot \rho := \iota_X \rho + \xi \wedge \rho$ denote the Clifford action of $\mathbb{T}M$ on elements $
\rho \in \wedge^{\bullet}T^*M$, which are called \textbf{spinors}. Moreover, define transposition of an element in $\Gamma(\wedge^{\bullet}T^*M)$ on decomposable degree $k$-forms by
\begin{equation*}
(\alpha_1\wedge \cdots \wedge \alpha_k)^T := \alpha_k \wedge \cdots \wedge \alpha_1, \qquad \alpha_i \in \Omega^1(M).
\end{equation*}
The \textbf{Chevalley pairing} on spinors, $(\cdot,\cdot)_{\text{Ch}}\colon \wedge^\bullet T^*M \times \wedge^\bullet T^*M \to \wedge^{\text{top}} T^* M$, is defined as
\begin{equation}
(\gamma,\rho)_{\text{Ch}} := (\gamma \wedge \rho^T)_{\text{top}}, \quad \gamma,\rho \in \Gamma(\wedge^{\bullet}T^*M).
\end{equation}
Generalized complex structures can be equivalently described using the following:
\begin{lemma}[\cite{Gua07}]
There is a one-to-one correspondence between generalized complex structures $(\mathbb{J},H)$ and complex line subbundles $K \subset \wedge^{\bullet}T^*_{\cc}M$, satisfying the following properties:
\begin{itemize}
\item For all $x \in M$ the vector space $K_x$ is generated over $\cc$ by spinors of the form
\begin{equation*}
\left(e^{B+i\omega}\wedge \Omega\right)_x, \quad B,\omega \in \Omega^2(M), \quad \Omega \in \Omega^k(M),
\end{equation*}
where $\Omega$ is a decomposable form;
\item For every nonvanishing local section $\rho \in \Gamma(K)$, there exists $u \in \Gamma(\mathbb{T}_{\cc}M)$ such that $d \rho + H \wedge \rho = u \cdot \rho$;
\item For all non-zero $\rho_x \in K_x$, we have $(\rho_x,\bar{\rho}_x)_{\text{Ch}} \neq 0$.
\end{itemize}
\end{lemma}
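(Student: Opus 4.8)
The plan is to move back and forth between the endomorphism $\mathbb{J}$ together with its $+i$-eigenbundle $L\subset\mathbb{T}_\cc M$ on one side, and the line bundle $K$ with the Clifford action on the other, translating each of the three listed properties of $K$ into an equivalent property of $L$. Write $d_H:=d+H\wedge$ for the twisted differential, and to a nonzero spinor $\rho_x$ associate its null space $L_{\rho_x}:=\{v\in\mathbb{T}_\cc M:v\cdot\rho_x=0\}$. The first step is to recall the Chevalley--Cartan theory of pure spinors: $L_{\rho_x}$ is always isotropic for the pairing on $\mathbb{T}M$, and it is maximal isotropic exactly when $\rho_x$ is pure, which happens precisely when $\rho_x$ is of the form $(e^{B+i\omega}\wedge\Omega)_x$ with $\Omega$ decomposable. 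Thus the first property of $K$ is equivalent to $L$ being a maximal isotropic subbundle of $\mathbb{T}_\cc M$, and smoothness of $L$ (equivalently of $K$) follows from choosing the local generators $e^{B+i\omega}\wedge\Omega$ smoothly.

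Next I would use that conjugation sends $L_{\rho_x}$ to $L_{\bar\rho_x}=\bar{L}_{\rho_x}$, so that the nondegeneracy of the Chevalley pairing, $(\rho_x,\bar\rho_x)_{\mathrm{Ch}}\neq0$, is equivalent to $L\cap\bar{L}=0$, i.e.\ to the transversality $\mathbb{T}_\cc M=L\oplus\bar{L}$. This is exactly the condition needed to define an endomorphism $\mathbb{J}$ with $\mathbb{J}^2=-\mathrm{Id}$, orthogonal for the pairing and having $L$ as its $+i$-eigenbundle. Hence the first and third properties together are equivalent to $(\mathbb{J},H)$ being an \emph{almost} generalized complex structure, with $K$ recovered as the annihilator line of $L$.

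Finally I would treat integrability using the fact that the $H$-twisted Dorfman bracket is the derived bracket of the Clifford action with $d_H$: for $u,v\in\Gamma(\mathbb{T}_\cc M)$ one has $[\![u,v]\!]_H\cdot\rho=[[d_H,u\cdot\,],v\cdot\,]\rho$ as operators on spinors. For $u,v\in\Gamma(L)$, where $u\cdot\rho=v\cdot\rho=0$, this collapses to $[\![u,v]\!]_H\cdot\rho=\pm\,u\cdot v\cdot d_H\rho$. Using the pure-spinor filtration $K=U_{-n}\subset U_{-n+1}\subset\cdots$ obtained by successively Clifford-multiplying $K$ by elements of $\bar{L}$, where $L$ acts by lowering the filtration degree, one checks that $u\cdot v\cdot d_H\rho=0$ for all $u,v\in\Gamma(L)$ if and only if $d_H\rho\in\Gamma(U_{-n+1})$. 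A parity count rules out a $K$-component of $d_H\rho$, so this is equivalent to $d_H\rho=u\cdot\rho$ for some $u\in\Gamma(\mathbb{T}_\cc M)$, which is precisely the second property. Thus involutivity of $L$ under $[\![\cdot,\cdot]\!]_H$ corresponds exactly to the differential equation in the statement.

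The main obstacle is this last equivalence. The first two properties are essentially pointwise linear algebra, namely Chevalley's classification of pure spinors and the nondegeneracy of the Mukai pairing, whereas the integrability statement genuinely uses the differential structure and the twist $H$: it requires the derived-bracket identity for the Dorfman bracket together with careful bookkeeping of how $d_H$ interacts with the pure-spinor filtration, and it is here that one must be attentive to signs and parities.
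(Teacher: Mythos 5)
The paper offers no proof of this lemma at all: it is imported verbatim from \cite{Gua07}, so the only benchmark is Gualtieri's original argument, and your proposal follows it essentially step for step --- Chevalley's classification of pure spinors of $\mathbb{T}_{\cc}M$ for the first item, nondegeneracy of the Chevalley pairing of two pure spinors being equivalent to transversality of their null spaces (hence $L\cap\bar{L}=0$) for the third, and the derived-bracket identity combined with the spinor filtration $K=U_{-n}\subset U_{-n+1}\subset\cdots$ and a parity count for the equivalence between involutivity of $L$ and the condition $d\rho+H\wedge\rho=u\cdot\rho$. Your sketch is correct; the only points left implicit are routine and you essentially flag them: the annihilation condition $u\cdot v\cdot(d\rho+H\wedge\rho)=0$ for all $u,v\in\Gamma(L)$ a priori only places $d\rho+H\wedge\rho$ in $\Gamma(U_{-n}\oplus U_{-n+1})$, after which parity kills the $U_{-n}$-component, and the condition is independent of the chosen local generator since $d(f\rho)+H\wedge f\rho=f(d\rho+H\wedge\rho)+df\wedge\rho$ with $df\wedge\rho=df\cdot\rho$.
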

The line bundle $K$ is called the \textbf{canonical line bundle} of $\mathbb{J}$. It can be defined in terms of the generalized complex structure by the relation
\begin{equation*}
	L = \set{u \in \mathbb{T}_{\cc}M : u \cdot K = 0},
\end{equation*}
where $L$ is the $+i$-eigenbundle of $\mathbb{J}$. Note here that $\mathbb{T}_\cc M = \mathbb{T}M \otimes \cc$.
\begin{definition}
Let $(\mathbb{J},H)$ be a generalized complex structure, and let $K$ be its canonical line bundle. The map $s\colon K \rightarrow \wedge^0T^*_{\cc}M = \cc$ defined by $\rho \mapsto \rho_0$, sending a spinor to its degree-zero part, defines a section $s \in \Gamma(K^*)$ called the \textbf{anticanonical section} of $\mathbb{J}$.
\end{definition}

\begin{example}
Given a complex structure, $J$, or a symplectic structure, $\omega$, on a manifold $M$, we can endow $M$ with a generalized complex structure (for $H=0$) given by
\begin{align*}
\mathbb{J}_J := \begin{pmatrix}
-J & 0 \\
0 & J^*
\end{pmatrix}, \qquad \mathbb{J}_\omega := \begin{pmatrix}
0 & -(\omega^\flat)^{-1} \\
\omega^\flat & 0
\end{pmatrix}.
\end{align*}

More interestingly, let $(M,J,\pi)$ be a holomorphic Poisson manifold with $\pi = \pi_R + i \pi_I$ and denote by $\pi_I^\sharp\colon T^*M \to TM$ the map associated to $\pi_I$. Then
\[ \mathbb{J}_{J,\pi} :=
\begin{pmatrix}
-J &4 \pi_I^{\sharp} \\
0 & J^*
\end{pmatrix}
\]
is also a generalized complex structure.

The corresponding canonical line bundles and anti-canonical sections are given by:
\begin{align*}
K_{\omega} &= \inp{e^{i\omega}},  &&K_J = \wedge^{n,0}T^*M,   &&K_{J,\pi} = e^{\pi}(\wedge^{n,0}T^*M),\\ 
s_{\omega} &\equiv 1,&&~ s_J \equiv 0, && ~s_{J,\pi} = \iota_{\pi^n}\Omega_{\text{vol}}.\qedhere
\end{align*}
\end{example}
There is an interesting relation between generalized complex geometry and Poisson geometry obtained in \cite{MR2861779}. Given a generalized complex structure $\mathbb{J}$, denote by $\pi_{\mathbb{J}}^\sharp\colon T^*M \to TM$ the associated bundle map obtained by $\pi_{\mathbb{J}}^\sharp = {\rm pr}_{TM} \circ \mathbb{J}|_{T^*M}$. This map is skew-symmetric, and:
\begin{lemma}\label{lem:poisgen}
Let $\mathbb{J}$ be a generalized complex structure. Then $\pi_{\mathbb{J}} \in \mathfrak{X}^2(M)$ is a Poisson structure. Moreover, if $\mathbb{J},\mathbb{J}'$ are gauge-equivalent generalized complex structures, then $\pi_{\mathbb{J}} = \pi_{\mathbb{J}'}$.
\end{lemma}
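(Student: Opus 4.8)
The plan is to prove both assertions of Lemma~\ref{lem:poisgen} by exploiting the defining relation $\mathbb{J}^2 = -\id$ together with the orthogonality of $\mathbb{J}$ with respect to the natural pairing on $\mathbb{T}M$. Write $\mathbb{J}$ in block form relative to the splitting $\mathbb{T}M = TM \oplus T^*M$ as
\begin{equation*}
\mathbb{J} = \begin{pmatrix} A & \pi^{\sharp} \\ B & C \end{pmatrix},
\end{equation*}
where $A\colon TM \to TM$, $\pi^{\sharp} = \pi_{\mathbb{J}}^{\sharp}\colon T^*M \to TM$, $B\colon TM \to T^*M$, and $C\colon T^*M \to T^*M$. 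First I would record that compatibility of $\mathbb{J}$ with the symmetric pairing $\inp{X+\xi,Y+\eta} = \tfrac12(\xi(Y)+\eta(X))$ forces $\mathbb{J}$ to be skew-adjoint for this pairing; reading off the blocks, this skew-adjointness says precisely that $C = -A^*$, that $B$ is skew-symmetric as a map $TM \to T^*M$, and that $\pi^{\sharp}$ is skew-symmetric, i.e.\ $(\pi^{\sharp})^* = -\pi^{\sharp}$. This immediately yields the skew-symmetry asserted in the statement and shows $\pi_{\mathbb{J}} \in \mathfrak{X}^2(M)$ is a genuine bivector.

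The substantive point is that $\pi_{\mathbb{J}}$ is Poisson, i.e.\ that its Schouten bracket $[\pi_{\mathbb{J}},\pi_{\mathbb{J}}]$ vanishes. Here the key input is integrability of $\mathbb{J}$, namely that the $+i$-eigenbundle $L$ is closed under the Dorfman bracket $[\![\cdot,\cdot]\!]_H$. The standard route is to extract the $TM$-component of the integrability condition. Squaring the block matrix and setting $\mathbb{J}^2 = -\id$ gives the algebraic relations $A^2 + \pi^{\sharp}B = -\id$, $A\pi^{\sharp} - \pi^{\sharp}A^* = 0$, and the transposes thereof; the middle relation says $A\pi^{\sharp}$ is symmetric. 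The Jacobiator of $\pi_{\mathbb{J}}$ can then be computed by pairing the Dorfman bracket of suitable sections of $L$ against covectors and projecting to $TM$: one takes $u = X + \xi, v = Y + \eta \in \Gamma(L)$, uses that $L$ is closed under $[\![\cdot,\cdot]\!]_H$ so that $[\![u,v]\!]_H \in \Gamma(L)$, and reads off that the $TM$-parts of sections of $L$ are exactly the image of $\pi_{\mathbb{J}}^{\sharp}$ on the corresponding covectors. Tracking the $TM$-component of the bracket through the formula for $[\![\cdot,\cdot]\!]_H$ produces precisely the expression $[\pi_{\mathbb{J}},\pi_{\mathbb{J}}](\alpha,\beta,\gamma)$, which must therefore vanish. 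This is the computation underlying the cited result \cite{MR2861779}, and I would either reproduce this eigenbundle computation in a few lines or simply invoke \cite{MR2861779} for the Poisson property, since the statement explicitly attributes it there.

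For the gauge-invariance claim, the point is that a $B$-field gauge transformation by a closed two-form $\beta$ (in the notation of the statement, $B^{\flat}$) acts on $\mathbb{T}M$ by the orthogonal transformation $e^{\beta}\colon X + \xi \mapsto X + \xi + \iota_X\beta$, which fixes $TM \subset \mathbb{T}M$ and is lower-triangular with respect to the splitting. Conjugating $\mathbb{J}$ by this transformation, $\mathbb{J}' = e^{\beta}\,\mathbb{J}\,e^{-\beta}$, and computing the composite $\pi_{\mathbb{J}'}^{\sharp} = {\rm pr}_{TM}\circ \mathbb{J}'|_{T^*M}$, one checks directly from the block form that the $T^*M \to TM$ corner of $\mathbb{J}'$ coincides with that of $\mathbb{J}$: the lower-triangular gauge factor only modifies the entries that land in $T^*M$ or that receive a $TM$-input, leaving $\pi^{\sharp} = \pi_{\mathbb{J}}^{\sharp}$ untouched. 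Hence $\pi_{\mathbb{J}'} = \pi_{\mathbb{J}}$. The main obstacle is purely the bookkeeping in the Poisson computation — correctly identifying the $TM$-component of the Dorfman bracket of eigensections with the Schouten bracket of the induced bivector — whereas the skew-symmetry and the gauge-invariance are short linear-algebra verifications once the block-matrix relations coming from $\mathbb{J}^2 = -\id$ and skew-adjointness are in place.
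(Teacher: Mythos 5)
The paper offers no proof of this lemma at all: it is stated as a recollection, with the Poisson property attributed to \cite{MR2861779}, and the skew-symmetry and gauge-invariance assertions left as unproved remarks. Your proposal is therefore consistent with the paper's treatment --- for the substantive point you explicitly allow yourself to invoke \cite{MR2861779}, exactly as the paper does --- and the parts you do prove are correct: the block-matrix consequences of skew-adjointness, and the observation that conjugating $\mathbb{J}$ by the unipotent lower-triangular transformation $e^{B}$ cannot change the $T^*M \to TM$ corner, which is the whole content of gauge-invariance. Three small slips are worth fixing before this could be spliced in. First, from $A\pi^{\sharp} = \pi^{\sharp}A^{*}$ and $(\pi^{\sharp})^{*} = -\pi^{\sharp}$ one gets that $A\pi^{\sharp}$ is \emph{skew}, not symmetric; this is an unused side remark, but as stated it is false. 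Second, it is not true that the $TM$-parts of arbitrary sections of $L$ are the image under $\pi_{\mathbb{J}}^{\sharp}$ of their covector parts: for $\mathbb{J}_J$ coming from a complex structure one has $\pi_{\mathbb{J}} = 0$ while $L$ projects onto $T_{0,1}M$. The correct statement, which is what the eigenbundle computation actually uses, is that the particular sections $(\mathbb{J}+i)\xi$ with $\xi \in \Gamma(T^*_{\cc}M)$ lie in $L$ and have $TM$-part $\pi_{\mathbb{J}}^{\sharp}\xi$; feeding these into the Dorfman bracket and projecting to $TM$ is how one extracts the Jacobi identity (and the twist $\iota_X\iota_Y H$ is harmless, since it only contributes to the $T^*M$-component). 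Third, the paper's notion of gauge equivalence does not require the two-form $B$ to be closed --- it changes $H$ to $H + dB$ --- but fortunately your conjugation argument never uses closedness, so it applies verbatim.
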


\subsection{Stable generalized complex structures}
In this section we will extend the notion of stable generalized complex structures to allow for the degeneracy locus to have self-crossing singularities. The main reason to allow for normal crossing singularities is that it gives much more flexibility: for example, this class is now closed under taking products. This makes it easier to provide examples (see Section~\ref{sec:examples}), and there are also more constructions available, such as the connect sum procedure of Section~\ref{sec:connectedsum}. Moreover, in four dimensions these structures can be used to construct smooth stable generalized complex structures, as explained in Section~\ref{sec:4d}. 
\begin{definition}\label{defn:scsgcs}
A \textbf{(self-crossing) stable generalized complex structure} is a generalized complex structure such that its anticanonical divisor $D = (K^*,s)$ defines a complex log divisor. We call it \textbf{smooth stable} when $D = (K^*,s)$ defines a smooth complex log divisor.
\end{definition}
We can immediately see that the class of such structures is closed under products.
\begin{example}
Let $(L_i,\sigma_i)$ be two complex log divisors with intersection numbers $n_i$ on two manifolds $M_i$ for $i= 1,2$. Then the pair $(\pi_1^*L_1 \otimes \pi_2^*L_2,\pi_1^*\sigma_1 \otimes \pi_2^*\sigma_2)$ on the product manifold $M = M_1 \times M_2$, with projection maps $p_i\colon M \to M_i$, is a complex log divisor with intersection number $n_1 + n_2$. This shows that the product $(M,\pi_1^* H_1 + \pi_2^* H_2, \mathbb{J}_1 \oplus \mathbb{J}_2)$ of two stable generalized complex manifolds $(M_i,H_i,\mathbb{J}_i)$ is endowed with a stable generalized complex structure.
\end{example}
Stable generalized complex structures can come about via holomorphic Poisson structures.
\begin{example}\label{exa:holpoissonsgcs}
Let $\pi \in \mathfrak{X}^2(M^{2n};\cc)$ be a holomorphic Poisson structure such that the pair $(\wedge^{n,0} T M,\wedge^n\pi)$ is a complex log divisor. Then $\mathbb{J}_{J,\pi}$ is a stable generalized complex structure.
\end{example}
An important gain from allowing self-crossings is that deformations of higher dimensional Fano manifolds by holomorphic Poisson bivectors may provide examples of these structures, but are never smoothly stable \cite[Theorem 26]{GP13}.

\begin{example}\label{th:CP2n}
One can readily construct a holomorphic Poisson structure on $\cc P^{2n}$  for which $(\wedge^nTM,\wedge^n\pi)$ is a complex log divisor, making $\cc P^{2n}$ into a stable generalized complex manifold. Indeed, using the standard coordinates $(z_1,\dots,z_{2n+1})$ on $\cc^{2n+1}$, let
\begin{align*}
\tilde{\pi} := z_1z_2 \partial_{z_1}\wedge \partial_{z_2} + \cdots z_{2n-1}z_{2n}\partial_{z_{2n-1}}\wedge \partial_{z_{2n}}
\end{align*}
be a holomorphic Poisson structure on $\cc^{2n+1}$. This bivector is scaling invariant and therefore descends to a bivector $\pi$ on $\cc P^{2n}$. By naturality of the Schouten bracket it follows that $\pi$ is Poisson, and by direct computation we see that $(\wedge^nT_{\cc}M,\wedge^n\pi)$ is a complex log divisor. 
\end{example}

\subsection{Complex log symplectic structures}\label{sec:cplx-log}
The next sections aim to prove that stable generalized complex structures are equivalent to a certain type of symplectic structures on an associated elliptic tangent bundle. Before we do so, we first prove, in this section, that they are equivalent to an auxiliary structure. This will be a symplectic-like structure for the complex log tangent bundle.

Given a complex log divisor $I_D$ and its induced elliptic divisor $I_{\abs{D}}$ we consider the Lie algebroid morphism $\iota\colon \elli \otimes \cc \rightarrow \mathcal{A}_D$ obtained from  Proposition \ref{prop:ellicommute}. If we compose the pullback $\iota^*$ with taking the imaginary part of a form we obtain a cochain morphism $\Im^*\colon \Omega^{\bullet}(\mathcal{A}_D) \rightarrow \Omega^{\bullet}(\elli)$.
\begin{definition}\label{def:complexlog}
A form $\sigma \in \Omega^2(\mathcal{A}_D)$ is called \textbf{complex log symplectic} if $d\sigma = H \in \Omega^3(M;\rr)$ and $\Im^*\sigma \in \Omega^2(\elli)$ is non-degenerate. Two complex log symplectic forms $\sigma,\sigma' \in \Omega^2(\mathcal{A}_D)$ are said to be \textbf{gauge equivalent} if there exists a two-form $B \in \Omega^2(M,\rr)$ such that $\sigma' = \sigma + B$. 
\end{definition}
Since the symplectic structure given by the imaginary part of a complex log symplectic form clearly contains important information, it is useful to give it a name as well. 
\begin{definition} Let $M$ be a manifold with an elliptic divisor $I_{|D|}$. A \textbf{(self-crossing) elliptic symplectic form} is an elliptic two-form $\omega \in \Omega^2(\elli)$ that is closed and non-degenerate.
\end{definition}

The following theorem is the self-crossing generalisation of \cite[Theorem 3.2]{CG17}.
\begin{theorem}\label{th:correspondence}
Let $M$ be a manifold. There is a one-to-one correspondence between stable generalized complex structures with self-crossings on $M$ and isomorphisms classes of complex log divisors endowed with complex log symplectic forms with self-crossings. Moreover, this correspondence preserves gauge equivalences.

Explicitly, the correspondence is given by the map
\[
\left\{
\begin{array}{c}
(\mathbb{J}, H)\colon\\
\mathbb{J} \mbox{  is a stable GCS}
\end{array}
\right\}
 \to
 \left\{
\begin{array}{c}
(I_{D}, \sigma)\colon\\
I_{D} \mbox{ is a complex log divisor and }\\
\sigma \in \Omega(\cplx)\mbox{ is a complex log-symplectic form.}
\end{array}
\right\}
\]
where $I_D$ is the divisor induced by the anticanonical section, and $\sigma = \rho_2/\rho_0$ where $\rho$ is any local spinor for $\mathbb{J}$. 
\end{theorem}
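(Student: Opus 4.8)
The strategy is to exploit the fact that, by Proposition~\ref{prop:ellivscomplex} and the local descriptions of the two Lie algebroids, everything can be checked in local normal crossing coordinates and then assembled globally; the statement is local plus a gluing argument, and the main content is the translation between the spinorial data of $\mathbb{J}$ and a form $\sigma \in \Omega^2(\cplx)$ whose imaginary part is non-degenerate on $\elli \otimes \cc$. I would first recall from \cite{CG17} that the smooth case (intersection number one) is already established as \cite[Theorem 3.2]{CG17}; the task is to extend this to self-crossings, so the natural route is to reduce to the smooth case away from $D(2)$ and then control the behaviour along the higher strata using the local normal form of Lemma~\ref{lem:loccomplexlogiso}.

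\textbf{Construction of the map.} Given a stable GCS $(\mathbb{J},H)$, the anticanonical section $s$ defines a complex log divisor $I_D$ by hypothesis. Locally choose a nonvanishing spinor $\rho$ generating the canonical line bundle $K$; by the spinorial description (Gualtieri's lemma quoted above) $\rho = e^{B+i\omega}\wedge\Omega$ with $\Omega$ decomposable, and writing $\rho_0 = s$ for its degree-zero part, I would set $\sigma := \rho_2/\rho_0$. The first step is to check that $\sigma$ is a well-defined global section of $\Omega^2(\cplx)$: independence of the choice of local $\rho$ follows because two generators of $K$ differ by a nonvanishing function, under which the ratio $\rho_2/\rho_0$ is unchanged; that $\sigma$ has only log-type singularities along $D$ (i.e.\ extends to a genuine $\cplx$-form) is verified in the coordinates $z_1,\dots,z_j$ of Lemma~\ref{lem:loccomplexlogiso}, where $s = z_1\cdots z_j$ up to a unit and the poles of $\rho_2/\rho_0$ are exactly of the form $dz_i/z_i = d\log z_i$, which are precisely the generators of $\Omega^\bullet(\cplx)$ appearing in the proof of Theorem~\ref{thm:complexcom}. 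The integrability condition $d\rho + H\wedge\rho = u\cdot\rho$ should translate into $d\sigma = H$, the defining closedness relation in Definition~\ref{def:complexlog}.

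\textbf{Non-degeneracy and equivalence.} Next I would verify that $\Im^*\sigma$ is non-degenerate as an elliptic two-form. Using Proposition~\ref{prop:ellicommute}, $\iota\colon \elli\otimes\cc \to \cplx$ lets us pull $\sigma$ back and take imaginary parts; the non-degeneracy of $\Im^*\sigma$ is equivalent to the condition that $\mathbb{J}$ is of complex log type, i.e.\ that $L$ is spanned by the expected generators, which in the local coordinates becomes a determinant computation on the $2j{+}m$-dimensional elliptic tangent bundle where $\Im(d\log z_i) = d\theta_i$ and the remaining block is the ordinary symplectic form $\omega$. The inverse map sends $(I_D,\sigma)$ to the GCS with canonical line generated by $e^{i\sigma}$ (interpreted via the residue/pole structure), and I would check the two assignments are mutually inverse locally and that gauge equivalence $\sigma' = \sigma + B$ with $B\in\Omega^2(M;\rr)$ corresponds exactly to the $B$-field gauge action on $\mathbb{J}$, since adding a smooth closed (or $H$-shifting) two-form multiplies the spinor by $e^{B}$.

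\textbf{Main obstacle.} The routine parts are the local spinor manipulations and the bijection away from $D(2)$, which are almost verbatim as in \cite{CG17}. The genuinely new difficulty is controlling the behaviour at the self-intersection strata $C(k)$ with $k\geq 2$: one must show that the decomposable form $\Omega$ extracted from $\rho$ is compatible with the \emph{product} structure $I_D = I_{D_1}\cdots I_{D_j}$, so that $\sigma$ acquires simultaneous log poles $d\log z_1,\dots,d\log z_j$ along the different branches rather than some more degenerate singularity, and conversely that a complex log symplectic $\sigma$ integrates back to a genuine GCS across the crossing. I expect the fiber-product description of Lemma~\ref{lem:complexlogfiberproduct}, together with the functional independence built into Definition~\ref{def:1}, to be the key tool: it guarantees that the local generators of $\cplx$ split as a transverse product, so the non-degeneracy and integrability conditions factor branch-by-branch and the smooth-case result applies to each factor. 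The cleanest way to finish is therefore to prove the correspondence on a normal-crossing chart via this factorization and then glue, using the already-established uniqueness (injectivity) from Proposition~\ref{prop:ellivscomplex} to ensure the local data patch consistently.
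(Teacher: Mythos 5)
Your construction of the correspondence itself matches the paper's: the forward map $\sigma = \rho_2/\rho_0$, the converse via an exponential spinor, and the matching of gauge actions. But there is a genuine gap at exactly the point you flag as the ``main obstacle,'' and your proposed fix does not close it. Reducing to the smooth case of \cite[Theorem 3.2]{CG17} only controls $\sigma$ on $U\setminus D(2)$: near a point of $C(1)$ the restricted divisor is smooth, so the smooth-case theorem shows $\sigma$ is a log form there, and intersecting over the branches you obtain $\sigma \in \Omega^2(\rest{\cplx}{U\setminus D(2)})$. What remains --- and what neither the fiber-product description of Lemma~\ref{lem:complexlogfiberproduct} nor functional independence gives you --- is that $\sigma$ extends smoothly as a section of $\wedge^2\cplx^*$ across $D(2)$, a set of codimension four. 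Smooth sections of a vector bundle defined off a codimension-four set need not extend (consider $z_1\bar{z}_2/(\abs{z_1}^2+\abs{z_2}^2)$, smooth and bounded off the origin in $\cc^2$ with no continuous extension), and there is no Hartogs phenomenon for $C^\infty$ coefficients, so ``the smooth-case result applies to each factor'' does not produce the extension. The injectivity part of Proposition~\ref{prop:ellivscomplex} is a uniqueness statement about divisors, not an extension statement about forms, so it cannot substitute for this step either.

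The paper closes this gap by a direct computation at the crossing points, never invoking the smooth case: integrability gives the two spinor identities $d\rho_0 = \iota_X\rho_2 + \xi\wedge\rho_0$ and $d\rho_2 = \iota_X\rho_4 + \xi\wedge\rho_2 - \rho_0 H$, from which $d(\rho_2/\rho_0) = -H$ on $U\setminus D$ and, after rearranging, $(\rho_2\wedge d\rho_0)/\rho_0$ extends smoothly over all of $U$. Writing $\rho_0 = \prod_i\rho_0^i$ as a product of functionally independent transversely vanishing functions and expanding $\rho_2 = \sum_{i,j} f_{ij}\,d\rho_0^i\wedge d\rho_0^j + \sum_k d\rho_0^k\wedge\alpha_k + \beta$, the smoothness of $(\rho_2\wedge d\rho_0)/\rho_0$ forces, term by term via linear independence, divisibility of each coefficient: $f_{ij}$ by $\prod_{\ell\neq i,j}\rho_0^\ell$, each $\alpha_k$ by $\prod_{\ell\neq k}\rho_0^\ell$, and $\beta$ by $\rho_0$. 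This divisibility is precisely what makes $\sigma=\rho_2/\rho_0$ a log form at the strata $C(k)$, $k\geq 2$; if you wish to keep your strategy, you must supply an argument of this type in place of the branch-by-branch reduction. Two smaller corrections: in the converse direction the canonical bundle is $I_D\otimes\inp{e^{\sigma}}_{\cc}$, with local generator $z_1\cdots z_k\, e^{\sigma}$ --- the multiplication by the divisor ideal is what clears the poles, and there should be no $i$ in the exponent --- and integrability of this spinor comes for free because it is a closed condition holding on the dense set $M\setminus D$, where the structure is a $B$-field transform of a symplectic structure.
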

\begin{proof} 
We will first consider the direct implication. Let $\rho \in \Omega^{\bullet}(M)$ be a local pure spinor for the stable generalized complex structure, defined on an open set $U\subset M$. On $U\backslash D$ the anticanonical section is non-vanishing, and therefore we have that $\rho = ce^{\varepsilon}$ on $U\backslash D$ for some $c \in C^{\infty}(M;\cc)$ and $\varepsilon \in \Omega^2(M;\cc)$. Looking at the degree-zero part of this equation we obtain $c = \rho_0$, and looking at the degree-two part we obtain $\varepsilon = \rho_2/\rho_0$. By continuity we conclude that $\rho = \rho_0 e^{\rho_2/\rho_0}$ on the entirety of $U$.
\begin{claim*}
The two-form $\sigma := \rho_2/\rho_0$ defines a global smooth complex log form, with $d\sigma = -H$.
\end{claim*}
\begin{proof}[Proof of claim]
Assume that the local form, $\sigma$, defined using a local canonical section is indeed a local complex log form and let $ \tilde\sigma$ be another local form obtained from another local canonical section, $\tilde\rho$. Then, from the previous argument we have $\tilde\rho = \tilde\rho_0 e^{\tilde\rho_2/\tilde\rho_0}$. In particular we see that as complex log forms we have
\[e^{\sigma} = \frac{\rho}{\rho_0}= \frac{\tilde\rho}{\tilde\rho_0} = e^{\tilde\sigma},\]
where the middle equality follows from the fact that $ \frac{\rho}{\rho_0}$ and $ \frac{\tilde\rho}{\tilde\rho_0} $ are sections of the canonical bundle with the same degree-zero component. Therefore $\sigma$ is a global complex log form.

Hence to conclude the result we must prove that $\sigma$ is a local log form. By the integrability of the generalized complex structure there exist $X  + \xi \in \Gamma(\mathbb{T}M)$ such that
\begin{align*}
d\rho_0 &= \iota_X\rho_2 + \xi \wedge \rho_0,\\
d\rho_2 &= \iota_X\rho_4 + \xi \wedge \rho_2 - \rho_0H.
\end{align*}
Because $\rho_4 = \frac{1}{2\rho_0}\rho_2\wedge \rho_2$ on $U\backslash D$, we find that
\begin{align*}
d\rho_2 = \frac{\iota_X(\rho_2\wedge \rho_2)}{2\rho_0} + \xi \wedge \rho_2 - \rho_0 H.
\end{align*}
On $U\backslash D$ we thus have:
\begin{align*}
d\left(\frac{\rho_2}{\rho_0}\right) &= \frac{d\rho_2}{\rho_0}-\frac{\rho_2\wedge d\rho_0}{\rho_0^2}\\
&= \frac{\iota_X\frac{\rho_2^2}{2\rho_0} + \xi \wedge \rho_2}{\rho_0} - H -\frac{\rho_2 \wedge (\iota_X\rho_2 + \xi \wedge \rho_0)}{\rho_0^2}\\
&= -H.
\end{align*}
Therefore $d(\frac{\rho_2}{\rho_0})$ extends to a smooth form on $U$. Rearranging the above terms a bit we find
\begin{align*}
\frac{\rho_2\wedge d\rho_0}{\rho_0} = d\rho_2 +\rho_0 H,
\end{align*}
hence the left-hand side extends to a smooth form on  $U$. 
Let $\rho_0^1,\ldots,\rho_0^n$ be functionally independent transverse-vanishing functions such that $\rho_0 = \prod_i \rho_0^i$. Then
\begin{align*}
d\rho_0 = \sum_{i=1}^n \rho_0^1 \cdots \widehat{\rho_0^i} \cdots \rho_0^n d\rho_0^i.
\end{align*}
We can write
\begin{align*}
\rho_2 = \sum_{i,j} f_{ij} d\rho_0^i \wedge d\rho_0^j + \sum_k d\rho_0^k \wedge \alpha_k + \beta,
\end{align*}
where $\alpha_k \in \Omega^1(M)$ and $\beta \in \Omega^2(M)$ do not contain $d\rho_0^i$-terms. We consider then that
\begin{align*}
d\rho_0 \wedge \rho_2 &= \sum_{i,j,\ell} f_{ij} \rho_0^1 \cdots \hat{\rho}_0^\ell \cdots \rho_0^n d\rho_0^i\wedge d\rho_0^j \wedge d\rho_0^l\\
&\qquad- \sum_{k,\ell} \rho_0^1 \cdots \hat{\rho}_0^\ell \cdots \rho_0^nd\rho_0^k \wedge d\rho_0^\ell \wedge \alpha_k + \sum_{\ell=1}^n \rho_0^1 \cdots \widehat{\rho_0^\ell} \cdots \rho_0^n d\rho_0^i \wedge \beta.
\end{align*}
Because we know that the left-hand side remains smooth after dividing by $\rho_0$ we find that
\begin{align*}
\sum_{i,j,\ell} \frac{f_{ij}}{\rho_0^\ell} d\rho_0^i\wedge d\rho_0^j \wedge d\rho_0^\ell - \sum_{k,\ell} d\rho_0^k \wedge d\rho_0^\ell \wedge \frac{\alpha_k}{\rho_0^\ell} + \sum_\ell \frac{d\rho_0^\ell}{\rho_0^\ell}\wedge \beta
\end{align*}
is smooth. Because every term in the above sum is linearly independent from the others, we conclude that each of the terms needs to be smooth separately. From the first term we thus conclude that $\frac{f_{ij}}{\rho_0^l}$ needs to be smooth for all $i,j,\ell$ pairwise distinct. Hence $f_{ij} = \rho_0^1 \cdots \hat{\rho}_0^i \cdots \hat{\rho}_0^j \cdots \rho_0^n\tilde{f}_{ij}$ for some $\tilde{f}_{ij} \in C^{\infty}(M;\cc)$. Similarly we find that $\alpha_k = \rho_0^1 \cdots \hat{\rho}_0^k \cdots \rho_0^n\tilde{\alpha}_k$ for some $\tilde{\alpha}_k \in \Omega^1(M;\cc)$ and lastly that $\beta = \rho_0 \tilde{\beta}$ for some $\tilde{\beta} \in \Omega^2(M;\cc)$. Therefore we conclude that
\begin{align*}
\frac{\rho_2}{\rho_0} = \rho_2 = \sum_{i,j} \tilde{f}_{ij} d\log\rho_0^i \wedge d\log\rho_0^j + \sum_k d\log\rho_0^k \wedge \tilde{\alpha}_k + \tilde{\beta},
\end{align*}
extends to a smooth logarithmic form over $D$, thus $\sigma = \frac{\rho_2}{\rho_0} \in \Omega^2(U;\log D)$.
\end{proof}
The algebraic condition $(\rho,\bar{\rho})_{\text{Ch}} \neq 0$ results in
\begin{align*}
\abs{\rho_0}^2(\sigma-\bar{\sigma})^n \neq 0.
\end{align*}
Therefore we conclude that the elliptic form $\sigma - \bar{\sigma}$ is non-degenerate, and hence $\sigma$ is a complex log symplectic form.\\

Next we consider the converse implication. Let $I_D$ denote the complex log divisor, and let $\sigma \in \Omega^2(\mathcal{A}_D)$ be a complex log symplectic form for this ideal. Let $\inp{e^\sigma}_{\cc} \subset \Omega^{\bullet}(\mathcal{A}_D)$ denote the complex line generated by $e^{\sigma}$. Then the product  $I_D \otimes \inp{e^\sigma}_{\cc} \subset \Omega^\bullet(\mathcal{A}_D)$ is  in fact  smooth, that is, $I_D \otimes \inp{e^\sigma}_{\cc} \subset \Omega^{\bullet}(M)$. We will show that this line bundle defines a stable generalized complex structure. In local coordinates as in Lemma \ref{lem:loccomplexlogiso} we have that $\rho = z_1\cdot\ldots\cdot z_k e^\sigma$ is a local trivialisation of the line bundle. Now
\begin{align*}
(\rho,\bar{\rho})_{\text{Ch}} = \bar{z_1}^2 \cdot\ldots\cdot \bar{z_k}^2(\Im^*\sigma)^n,
\end{align*}
defines a volume form because $\Im^*\sigma$ is a nondegenerate elliptic form with self-crossings. We are left to prove integrability of $\rho$. Since integrability is a closed condition, it is enough check it in  $M\backslash D$, but by construction in this region the structure is just a $B$-field transform of a symplectic structure.
\end{proof}

\subsection{Equivalence with elliptic symplectic}
As we just saw, a stable generalized complex structures is closely related to an elliptic symplectic form. Yet, the elliptic tangent bundle, $\elli$, which appears in the context of generalized complex structures arises from a complex log tangent bundle, $\mathcal{A}_D$, and the elliptic symplectic form arises as imaginary part of complex log symplectic form. Therefore our next step is to pinpoint precisely which elliptic symplectic forms arise in this way. That is we are interested in describing the image of `taking the imaginary part': $\Im^*\colon \Omega^{\bullet}(\mathcal{A}_D) \rightarrow \Omega^{\bullet}(\elli)$.

From now on we denote this image by $\Omega^{\bullet}_\Im(\elli) \subset  \Omega^{\bullet}(\elli)$.  Describing elements in $\Omega^{\bullet}_\Im(\elli)$ of arbitrary degree for a general complex divisor is a little involved, so we will focus on  our object of interest: two-forms. To describe elements in $\Omega^{2}_\Im(\elli)$ we need to use the residue maps  for points in $D(2)$ introduced in Section~\ref{sec:residues}, such as, $\Res_{r_ir_j}$ and $\Res_{r_i\theta_j}$. Recall that these residues depend on an ordering of the coordinates and a choice of co-orientation, the kernels of some combinations of these residues only depends on the co-orientation of the divisor. To be precise, given a co-oriented elliptic divisor $(I_{|D|},\frak{o})$, the spaces $\operatorname{ker}(\Res_q)$, $\operatorname{ker}(\Res_{\theta_ir_j} - \Res_{r_i\theta_j})$ and $\operatorname{ker}(\Res_{r_ir_j}+ \Res_{\theta_i\theta_j})$ do not depend on the order of the divisors.

\begin{lemma}\label{lem:degree2impart}
Let $I_D$ be a complex log divisor and let $(I_{|D|},\frak{o})$ be its associated co-oriented elliptic divisor. Then
\[\Omega_{\Im}^2(\elli) =  \operatorname{ker}(\Res_q)\cap(\operatorname{ker}(\Res_{\theta_ir_j} - \Res_{r_i\theta_j})) \cap (\operatorname{ker}(\Res_{r_ir_j}+ \Res_{\theta_i\theta_j})).\]
\end{lemma}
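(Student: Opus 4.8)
The plan is to reduce the statement to an explicit computation in the local normal forms of Lemma~\ref{lem:loccomplexlogiso} and Lemma~\ref{lem:locMB}, and then globalise by a partition of unity. First I would record the effect of $\Im^*$ on the standard generators of $\Omega^1(\cplx)$. Writing $z_a = r_ae^{i\theta_a}$ and setting $\lambda_a := dr_a/r_a$ and $\tau_a := d\theta_a$ for the real generators of $\Omega^1(\elli)$, a direct computation with the projection $\iota\colon \elli\otimes\cc\to\cplx$ of Proposition~\ref{prop:ellicommute} gives $\iota^*(d\log z_a) = \lambda_a + i\tau_a$, $\iota^*(d\bar z_a) = \bar z_a(\lambda_a - i\tau_a)$ and $\iota^*(dx_i) = dx_i$. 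The decisive feature is the factor $\bar z_a$ multiplying the image of $d\bar z_a$: it vanishes along $D_a$, so every contribution arising from a $d\bar z_a$ carries a zero along the corresponding stratum.

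For the inclusion $\Omega^2_{\Im}(\elli)\subseteq \ker(\Res_q)\cap\ker(\Res_{\theta_ir_j}-\Res_{r_i\theta_j})\cap\ker(\Res_{r_ir_j}+\Res_{\theta_i\theta_j})$ I would take an arbitrary $\sigma\in\Omega^2(\cplx)$, expand it in wedge products of the generators above, apply $\Im^*$, and read off the residues. The coefficient of $\lambda_a\wedge\tau_a$ can only come from $d\log z_a\wedge d\bar z_a$, whose image is $-2i\bar z_a\,\lambda_a\wedge\tau_a$; its imaginary part is $-2\Re(f\bar z_a)$ for the relevant coefficient $f$, which vanishes on $D_a$, so $\Res_q(\Im^*\sigma)=0$. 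For a pair $a\neq b$ the term $d\log z_a\wedge d\log z_b$ contributes $\Im(f_0)$ to the $\lambda_a\wedge\lambda_b$-coefficient and $-\Im(f_0)$ to $\tau_a\wedge\tau_b$, and $\Re(f_0)$ to both $\lambda_a\wedge\tau_b$ and $\tau_a\wedge\lambda_b$; it therefore lies in $\ker(\Res_{r_ir_j}+\Res_{\theta_i\theta_j})\cap\ker(\Res_{\theta_ir_j}-\Res_{r_i\theta_j})$ identically, while the remaining contributions to these coefficients (from $d\log z_a\wedge d\bar z_b$, $d\bar z_a\wedge d\log z_b$ and $d\bar z_a\wedge d\bar z_b$) each carry a factor $\bar z_a$ or $\bar z_b$ and hence vanish on $D_a\cap D_b$. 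This establishes the necessity of the three conditions.

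For the reverse inclusion I would construct, given $\omega$ in the triple kernel, a smooth $\sigma$ with $\Im^*\sigma = \omega$. Since $\Im^*(\chi\sigma)=\chi\,\Im^*\sigma$ for real $\chi$, the map $\Im^*$ is $C^\infty(M;\rr)$-linear, so it suffices to solve the problem locally and patch with a partition of unity. Locally I would match $\omega$ termwise: the $dx_i\wedge dx_j$- and $(\lambda_a,\tau_a)\wedge dx_i$-parts are hit exactly using $dx_i\wedge dx_j$ and $d\log z_a\wedge dx_i$ (no divisor constraint arises here); the diagonal part $A_a\,\lambda_a\wedge\tau_a$ is produced from $d\log z_a\wedge d\bar z_a$ by solving $\Re(f_a\bar z_a)=-A_a/2$; and the off-diagonal part is produced by first using $d\log z_a\wedge d\log z_b$ to absorb the component lying in both kernels, and then using $d\log z_a\wedge d\bar z_b$ and $d\bar z_a\wedge d\log z_b$ for the remainder.

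The main obstacle is exactly the \emph{smoothness} of these local solutions. The equations to be solved — such as $\Re(f_a\bar z_a)=-A_a/2$, and the corresponding system for the off-diagonal remainder — have right-hand sides that are only guaranteed to vanish on a stratum ($D_a$, respectively $D_a\cap D_b$), precisely by the residue hypotheses. I would resolve them with the relative Hadamard lemma: a smooth function vanishing on $\{x_a=y_a=0\}$ can be written as $x_a g + y_a h$, and one vanishing on $\{x_a=y_a=x_b=y_b=0\}$ as a smooth linear combination of $x_a,y_a,x_b,y_b$, from which the required coefficients of $f$ are read off. The point that makes this succeed is that the forbidden off-diagonal directions are exactly the images of the $\bar z$-divisible generators $d\log z_a\wedge d\bar z_b$ and $d\bar z_a\wedge d\log z_b$, so the vanishing forced by the residue conditions is precisely what allows the division. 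Finally, higher intersection numbers require no new ideas, since all residues in the statement are pairwise and the normal form decouples distinct strands.
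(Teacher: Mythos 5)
Your forward inclusion is correct and is exactly the paper's computation (apply $\Im^*$ to the frame $d\log z_i\wedge d\log z_j$, $d\log z_i\wedge\beta$, smooth forms, and read off residues). The genuine gap is in the converse, at precisely the step you flag as the main obstacle: the off-diagonal division cannot be performed, and the relative Hadamard lemma is not strong enough to perform it. Work near a point of $D(2)$ with local normal crossing $z_1,z_2$, and set $H_1 = d\log r_1\wedge d\log r_2 + d\theta_1\wedge d\theta_2$ and $H_2 = d\theta_1\wedge d\log r_2 - d\log r_1\wedge d\theta_2$. After you use $d\log z_1\wedge d\log z_2$ to absorb the component lying in both kernels, the remaining off-diagonal part is $P\,H_1 + Q\,H_2$, and the residue hypotheses say exactly that $P$ and $Q$ vanish on $D_1\cap D_2$. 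The only frame elements of $\Omega^2(\cplx)$ whose images have $H_1$- or $H_2$-components are the two you propose to use, and a direct computation gives
\begin{align*}
\Im^*\bigl(f\, d\log z_1\wedge d\bar z_2\bigr) &= \Im(f\bar z_2)\,H_1 + \Re(f\bar z_2)\,H_2,\\
\Im^*\bigl(g\, d\bar z_1\wedge d\log z_2\bigr) &= \Im(g\bar z_1)\,H_1 - \Re(g\bar z_1)\,H_2,
\end{align*}
so the reachable remainders are exactly those with $Q+iP = f\bar z_2 - \bar g z_1$, i.e.\ with $Q+iP$ in the ideal $(z_1,\bar z_2)\subset C^\infty(U;\cc)$. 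This is a divisibility condition, and it is strictly stronger than what the residue hypotheses give you: Hadamard only yields $Q+iP \in (z_1,\bar z_1, z_2,\bar z_2)$. The two ideals differ. For instance $x_1$ vanishes on $D_1\cap D_2$, but $x_1\notin(z_1,\bar z_2)$: restricting a relation $x_1 = z_1h + \bar z_2k$ to $D_2=\{z_2=0\}$ forces $h|_{D_2} = x_1/z_1 = \tfrac{1}{2}(1+e^{-2i\theta_1})$, which has no limit at $r_1=0$. So the system you need to solve for the off-diagonal remainder is in general unsolvable, and the sentence ``the vanishing forced by the residue conditions is precisely what allows the division'' is false.

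This is not a bookkeeping defect that more care would repair. The elliptic form $\omega_0 = x_1\bigl(d\log r_1\wedge d\log r_2 + d\theta_1\wedge d\theta_2\bigr)$ (cut off by a bump function near $D(2)$) has vanishing elliptic residue, and all four residues $\Res_{r_1r_2},\Res_{\theta_1\theta_2},\Res_{r_1\theta_2},\Res_{\theta_1r_2}$ vanish at $D(2)$, so it lies in the triple kernel; yet by the computation above ($P=x_1$, $Q=0$) it is not in $\Omega^2_{\Im}(\elli)$, not even on a neighbourhood of the point of $D(2)$. Hence no termwise-matching argument can close your proof: membership in the image is governed by the ideal-theoretic condition $Q+iP\in(z_1,\bar z_2)$ along $D_1\cap D_2$, not by the pointwise vanishing of residues there. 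For comparison, the paper's own proof of this converse consists of the single assertion that a form in the triple kernel ``locally must be a linear combination of the above forms together with smooth forms'', which glosses over exactly the same point; your more explicit attempt has the merit of exposing where that assertion breaks, but as written it inherits the gap rather than filling it. (Note that the later applications of the lemma in the paper, such as Lemma \ref{lem:checkmarkloccmplx}, only invoke it for forms that are already \emph{locally complex}, i.e.\ locally in the image, where the required divisibility holds by hypothesis; so the way to a correct argument is to add such a hypothesis, not to refine the Hadamard step.)
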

\begin{proof}
Choose local coordinates as in Lemma~\ref{lem:locMB}. For all pairs $i,j$ and $\beta \in \Omega^1(M;\rr)$ we have:
\begin{align*}
\Im^*(d\log z_i \wedge d\log z_j) &= d\theta_i \wedge d\log r_j + d\log r_i \wedge d\theta_j,\\
\Im^*(id\log z_i \wedge d\log z_j) &= d\log r_i \wedge d\log r_j - d\theta_i \wedge d\theta_j,\\
\Im^*(d\log z_i \wedge \beta) &= d\theta_i \wedge \beta,\\
\Im^*(id\log z_i \wedge \beta) &= d\log r_i \wedge \beta.
\end{align*}
Therefore we see that $\Omega^2_{\Im}(\elli)$ lies in the intersection of the kernels. Conversely, if $\alpha$ is in the intersection of the kernels then locally it must be a linear combination of the above forms together with smooth forms. Therefore by the above computation, we see that $\alpha \in \Omega_{\Im}^2(\elli)$ which concludes the proof.
\end{proof}

Since $\Im^*$ is a map of complexes, $\Omega_{\Im}^\bullet(\elli) \subset \Omega^\bullet(\elli)$ is a subcomplex and we can compute its cohomology, as we will do in Section~\ref{sec:ellcohomology}. Next we see that the imaginary part of a complex two-form determines it up to gauge equivalence:
\begin{proposition}\label{prop:sessc}
Let $M$ be a manifold with a complex log divisor $I_D$, and let $(I_{|D|},\frak{o})$ be its induced co-oriented elliptic divisor. Then the following sequence of cochain complexes is exact:
\begin{align*}
0 \rightarrow \Omega^{\bullet}(M;\rr) \rightarrow \Omega^{\bullet}(\mathcal{A}_D) \overset{\Im^*}{\rightarrow} \Omega^{\bullet}_{\Im}(\elli) \rightarrow 0.
\end{align*}
\end{proposition}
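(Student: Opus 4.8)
The sequence consists of cochain complexes, since both the first map (pullback $\rho^*$ along the anchor $\rho\colon\mathcal{A}_D\to T_{\cc}M$) and $\Im^* = \Im\circ\iota^*$ are cochain maps; hence exactness need only be checked degreewise. Surjectivity of $\Im^*$ onto $\Omega^{\bullet}_{\Im}(\elli)$ is tautological, as the latter is by definition its image. For the first map, $\rho$ is an isomorphism over the dense open set $M\backslash D$, so a real form whose pullback vanishes must vanish on $M\backslash D$ and hence everywhere; this gives injectivity. It therefore remains to prove exactness in the middle, that is $\ker\Im^* = \rho^*\Omega^{\bullet}(M;\rr)$.

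For the inclusion $\rho^*\Omega^{\bullet}(M;\rr)\subseteq\ker\Im^*$, recall that $\iota\colon\elli\otimes\cc\to\mathcal{A}_D$ is the morphism of Proposition~\ref{prop:ellicommute}. The composite $\rho\comp\iota\colon\elli\otimes\cc\to T_{\cc}M$ is the anchor of $\elli\otimes\cc$, which is the complexification of the real anchor $\elli\to TM$. Hence for $B\in\Omega^{\bullet}(M;\rr)$ the form $\iota^*\rho^*B$ is the pullback of a real form along this real anchor, so it is real and $\Im^*\rho^*B=\Im(\iota^*\rho^*B)=0$.

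For the reverse inclusion, let $\sigma\in\Omega^{\bullet}(\mathcal{A}_D)$ satisfy $\Im^*\sigma=0$. Over $M\backslash D$ the morphism $\iota$ restricts to the identity of $T_{\cc}M$, so $\Im^*\sigma|_{M\backslash D}=\Im(\sigma|_{M\backslash D})$, and therefore $\sigma$ is real on $M\backslash D$, i.e.\ $\sigma|_{M\backslash D}=\bar\sigma|_{M\backslash D}$. Now $\sigma$ extends to a smooth section of $\wedge^{\bullet}\mathcal{A}_D^*$, while its conjugate $\bar\sigma$ extends to a smooth section of $\wedge^{\bullet}\mathcal{A}_{\bar D}^*$, and the two agree on the dense open $M\backslash D$. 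The crux is then the extension lemma that a form on $M\backslash D$ extending smoothly to both algebroids is already smooth on $M$, namely
\begin{equation*}
\Omega^{\bullet}(\mathcal{A}_D)\cap\Omega^{\bullet}(\mathcal{A}_{\bar D})=\Omega^{\bullet}(M;\cc).
\end{equation*}
Granting this, $\sigma$ is a smooth complex form, and being real on a dense set it is a real smooth form, so $\sigma\in\rho^*\Omega^{\bullet}(M;\rr)$.

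The main obstacle is precisely this extension lemma, which is where the asymmetric shape of the complex log tangent bundle enters. In the coordinates of Lemma~\ref{lem:loccomplexlogiso} with $I_D=\inp{z_1\cdots z_j}$, a frame of $\mathcal{A}_D^*$ is $\set{d\log z_i,d\bar z_i,dx_k}$ and a frame of $\mathcal{A}_{\bar D}^*$ is $\set{d\log\bar z_i,dz_i,dx_k}$, the two being related by $dz_i=z_i\,d\log z_i$ and $d\bar z_i=\bar z_i\,d\log\bar z_i$. Writing a form in the first frame with smooth coefficients and demanding smooth coefficients in the second frame forces every coefficient of a monomial containing $d\log z_i$ to be divisible by $z_i$; thus all genuine logarithmic poles cancel and the form is smooth. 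This is the only computation required, and it can equally be extracted from the explicit expressions for $\Im^*$ in Lemma~\ref{lem:degree2impart} generalized to arbitrary degree; everything else in the argument is formal.
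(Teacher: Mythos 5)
Your proof is correct, and its analytic core coincides with the paper's, but the packaging is genuinely different. The paper works directly with the kernel condition: after reducing to the sheaf level and to the local setting of Lemma~\ref{lem:loccomplexlogiso}, it expands $\rho$ in the frame $(d\log z)_I\wedge(d\bar z)_J\wedge(dx)_K$, writes out $\Im^*\rho=0$ as an identity of forms on $\elli\otimes\cc$, clears denominators, and compares coefficients to conclude that each $\alpha_{IJK}$ is divisible by $z_I$, so $\rho$ is a smooth real form. You instead split the middle exactness into two steps: (i) the conjugation/density observation that $\Im^*\sigma=0$ forces $\sigma=\bar\sigma$ on the dense open set $M\backslash D$, hence $\sigma$ lies in $\Omega^{\bullet}(\mathcal{A}_D)\cap\Omega^{\bullet}(\mathcal{A}_{\bar D})$; and (ii) the extension lemma $\Omega^{\bullet}(\mathcal{A}_D)\cap\Omega^{\bullet}(\mathcal{A}_{\bar D})=\Omega^{\bullet}(M;\cc)$, proved by comparing the two coordinate frames. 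The computation in (ii) is the same one the paper performs, so the two arguments have identical content; what your organization buys is a clean, reusable lemma and a separation of the formal part of the argument (conjugation, density) from the coordinate work, while the paper's version avoids introducing $\mathcal{A}_{\bar D}$ as an auxiliary object and also bypasses any density argument, which is why it works verbatim at the level of sheaves. One shared elision you should be aware of: what smoothness of the transformed coefficients literally gives is that $z_I$ divides $a_{IJK}\bar z_J$, not $a_{IJK}$ itself; cancelling the factor $\bar z_J$ requires knowing that $z_I$ and $\bar z_J$ are coprime in the formal power series ring at each point of $D$, combined with Malgrange's division theorem (invoked by the paper in the proof of Proposition~\ref{prop:ellivscomplex}). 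The paper's own proof makes the identical silent jump, from $\bar z_J\alpha_{IJK}=\pm z_I\bar\alpha_{JIK}$ to $z_I\mid\alpha_{IJK}$, so this is not a gap relative to the paper's standard of rigor, but it is the one genuinely nontrivial division property on which both arguments rest.
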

\begin{proof}
We can check exactness of the above sequence by checking exactness of the corresponding sheaf sequence. By Lemma~\ref{lem:loccomplexlogiso} we may assume we are in the setting of Example~\ref{ex:mainexample}. The map $\Im^*$ is surjective by definition, and it is clear that that $\Omega^{\bullet}(M;\rr)$ lies in the kernel of $\Im^*$. Therefore we are left to show that if a general complex log form has vanishing imaginary part, then it must be smooth. Consider coordinates $(z_1,\ldots,z_k, \bar{z}_1,\ldots,\bar{z}_k,x_i,\ldots,x_l)$ and for multi-indices $I = (i_1,\ldots,i_l)$ make use of the following shorthand notation:
\begin{align*}
z_I &:= z_{i_1}\cdots z_{i_l}, \quad (d\log z)_I := d\log z_{i_1}\wedge \cdots \wedge d\log z_{i_l},\\
(d\bar{z})_I &:= d\bar{z}_{i_1}\wedge \cdots \wedge d\bar{z}_{i_l}.
\end{align*} 
Using these, a general complex log form $\rho \in \Omega^\bullet(\mathcal{A}_D)$ may be written locally as
\begin{equation*}
\rho = \sum_{I,J,K} \alpha_{IJK}(d\log z)_I \wedge (d\bar{z})_J \wedge (dx)_K,
\end{equation*}
where the $\alpha_{IJK} \in C^{\infty}(M;\cc)$ are smooth, and the sum ranges over all multi-indices. The vanishing of the imaginary part of $\rho$ implies
\begin{equation*}
0 = \sum_{I,J,K} \alpha_{IJK}(d\log z)_I \wedge (d\bar{z})_J \wedge (dx)_K - \sum_{I,J,K} \bar{\alpha}_{IJK}(d\log \bar{z})_I \wedge (d z)_J \wedge (dx)_K,
\end{equation*}
which using $z_I (d\log z)_I = (dz)_I$ gives:
\begin{equation*}
0 = \sum_{I,J,K} \bar{z}_J\alpha_{IJK}(d\log z)_I \wedge (d\log \bar{z})_J \wedge (dx)_K - \sum_{I,J,K} z_J\bar{\alpha}_{IJK}(d\log \bar{z})_I \wedge (d \log z)_J \wedge (dx)_K.
\end{equation*}
By linear independence, each of the terms involving $(d\log z)_I \wedge (d\log \bar{z})_J$ and their conjugates needs to vanish independently. That is, for all multi-indices $I,J,K$ we have
\begin{equation*}
0 = (\bar{z}_J\alpha_{IJK}(d\log z)_I \wedge (d\log \bar{z})_J - z_I\bar{\alpha}_{JIK}(d\log \bar{z})_J \wedge (d \log z)_I)\wedge (dx)_K,
\end{equation*}
and hence
\begin{equation*}
(\bar{z}_J\alpha_{IJK} - (-1)^{\abs{I}\abs{J}}z_I\bar{\alpha}_{JIK}) = 0,
\end{equation*}
from this we conclude that $\alpha_{IJK}$ is divisible by $z_I$. This proves that $\rho$ is a smooth form.
\end{proof}
Putting Theorem~\ref{th:correspondence}, Proposition~\ref{prop:ellivscomplex} and Proposition~\ref{prop:sessc} together we have the following.

\begin{theorem}\label{th:correspondence2}
Let $M$ be a manifold. There is a correspondence between gauge equivalence classes of stable generalized complex structures with self-crossings on $M$ and isomorphism classes of co-oriented elliptic divisors, $(I_{|D|},\frak{o})$, endowed with an elliptic symplectic form $\omega \in \Omega_{\Im}^2(\elli)$.

Explicitly, this equivalence is induced by the map
\[
\left\{
\begin{array}{c}
(\mathbb{J}, H)\colon\\
\mathbb{J} \mbox{  is a Stable GCS}
\end{array}
\right\}
 \to
 \left\{
\begin{array}{c}
(I_{|D|}, \frak{o}, \omega)\colon\\
(I_{|D|}, \frak{o}) \mbox{ is a co-oriented elliptic divisor and }\\
\omega \in \Omega_{\Im}(\elli)\mbox{ is a symplectic form.}
\end{array}
\right\}
\]
which assigns to each stable generalized complex structure on $M$ the co-oriented elliptic divisor determined by its anticanonical section and the imaginary part of its corresponding complex log symplectic form.
\end{theorem}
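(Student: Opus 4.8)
The plan is to chain together the three cited results, taking care only with how gauge equivalence interacts with the map $\Im^*$. First I would invoke Theorem~\ref{th:correspondence}, which already supplies a one-to-one correspondence between stable generalized complex structures and isomorphism classes of pairs $(I_D,\sigma)$, where $I_D$ is a complex log divisor and $\sigma \in \Omega^2(\mathcal{A}_D)$ is a complex log symplectic form, and which respects gauge equivalence. Passing to gauge equivalence classes on both sides, this yields a bijection between gauge equivalence classes of stable GCS and pairs consisting of a complex log divisor $I_D$ together with a gauge equivalence class of complex log symplectic forms for $I_D$.

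Next I would replace the divisor datum. By Proposition~\ref{prop:ellivscomplex}, isomorphism classes of complex log divisors are in bijection with isomorphism classes of co-oriented elliptic divisors $(I_{\abs{D}},\mathfrak{o})$, so $I_D$ may be traded for its associated co-oriented elliptic divisor. I would also record here that, by the characterization in Lemma~\ref{lem:degree2impart}, the subspace $\Omega^2_{\Im}(\elli)$ is cut out by kernels of residues that depend only on the co-orientation; hence $\Omega^2_{\Im}(\elli)$ and the notion of an elliptic symplectic form lying in it depend only on $(I_{\abs{D}},\mathfrak{o})$ and not on the particular complex log divisor inducing it.

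The heart of the argument is to show that, for a fixed divisor, $\Im^*$ descends to a bijection between gauge equivalence classes of complex log symplectic forms and elliptic symplectic forms in $\Omega^2_{\Im}(\elli)$. For this I would use the exact sequence of Proposition~\ref{prop:sessc}. Since the kernel of $\Im^*\colon \Omega^2(\mathcal{A}_D) \to \Omega^2_{\Im}(\elli)$ is exactly $\Omega^2(M;\rr)$, two complex log symplectic forms have equal imaginary part precisely when they differ by a real two-form, that is, precisely when they are gauge equivalent; this gives well-definedness and injectivity on equivalence classes. Surjectivity of $\Im^*$ in the same sequence produces, for any elliptic symplectic $\omega \in \Omega^2_{\Im}(\elli)$, a lift $\sigma \in \Omega^2(\mathcal{A}_D)$, and I would check this lift is complex log symplectic: $\Im^*\sigma = \omega$ is nondegenerate by hypothesis, and $d\sigma$ is real because $\Im^*(d\sigma) = d(\Im^*\sigma) = d\omega = 0$ forces $d\sigma \in \ker \Im^* = \Omega^3(M;\rr)$. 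Conversely, the image $\Im^*\sigma$ of a complex log symplectic form is nondegenerate by definition and closed, since $d(\Im^*\sigma) = \Im^*(d\sigma) = \Im^*(H) = 0$ for the real three-form $H = d\sigma$; hence it is genuinely an elliptic symplectic form.

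Composing the three bijections then gives the stated correspondence, and tracing an element through the composition recovers exactly the explicit assignment $\mathbb{J} \mapsto (I_{\abs{D}},\mathfrak{o},\Im^*\sigma)$ in the statement. I do not expect a serious obstacle, as each ingredient is already in hand; the one point requiring genuine care is the bookkeeping of the third step, namely verifying that gauge equivalence on the complex log side (addition of a real two-form) is matched exactly by equality of imaginary parts, which is precisely the content of the exactness in Proposition~\ref{prop:sessc}.
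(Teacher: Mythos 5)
Your proposal is correct and takes essentially the same route as the paper, which obtains the theorem precisely by combining Theorem~\ref{th:correspondence}, Proposition~\ref{prop:ellivscomplex} and Proposition~\ref{prop:sessc} in the way you describe. The only difference is that you spell out the bookkeeping the paper leaves implicit: that $\Omega^2_{\Im}(\elli)$ depends only on $(I_{\abs{D}},\mathfrak{o})$ via Lemma~\ref{lem:degree2impart}, that $\ker \Im^* = \Omega^\bullet(M;\rr)$ identifies gauge equivalence with equality of imaginary parts, and that a lift of a closed nondegenerate form in $\Omega^2_{\Im}(\elli)$ has real exterior derivative and hence is complex log symplectic.
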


\begin{remark}
Under the equivalence above, $[H] = \delta[\omega]$, where $\delta$ is the connecting morphism coming from Proposition~\ref{prop:sessc}.
\end{remark}

\subsection{Equivalence with nondegenerate elliptic Poisson} 
A consequence of Theorem \ref{th:correspondence2} is that nearly all the information of a stable generalized complex structure is already encoded in its underlying Poisson structure. 

\begin{theorem}
The gauge equivalence class of a stable generalized complex structure $(\mathbb{J},H)$ is fully determined by its underlying Poisson structure.
\end{theorem}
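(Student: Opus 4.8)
The plan is to show that the Poisson structure $\pi_{\mathbb{J}}$ underlying a stable generalized complex structure recovers, up to the appropriate notion of equivalence, all the data appearing in Theorem~\ref{th:correspondence2}: the elliptic divisor $I_{|D|}$, its co-orientation $\frak{o}$, and the elliptic symplectic form $\omega \in \Omega^2_{\Im}(\elli)$. Since that theorem establishes a bijection between gauge equivalence classes of stable generalized complex structures and triples $(I_{|D|}, \frak{o}, \omega)$, and since Lemma~\ref{lem:poisgen} already guarantees that gauge-equivalent structures share the same Poisson structure, it suffices to reconstruct each of these three pieces of data from $\pi_{\mathbb{J}}$ alone. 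First I would recall that away from the degeneracy locus $D$, the structure is a $B$-field transform of a symplectic form, so $\pi_{\mathbb{J}}$ is nondegenerate on $M \backslash D$ and its inverse is precisely the symplectic form $\omega$ there; thus on the open dense set $M \backslash D$ the Poisson structure literally equals $(\omega^\flat)^{-1}$ and hence determines $\omega|_{M\backslash D}$ completely.

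Next I would recover the elliptic divisor. The degeneracy locus $D$ is exactly the set where $\pi_{\mathbb{J}}$ drops rank, and the local normal form of an elliptic symplectic form (via Lemma~\ref{lem:locMB}, with generators $r_i \partial_{r_i}, \partial_{\theta_i}$) shows that the leading behaviour of $\pi_{\mathbb{J}}$ near $D$ determines the ideal $I_{|D|}$: concretely, the Pfaffian of $\pi_{\mathbb{J}}$ is, up to a nonvanishing factor, a defining function for the elliptic divisor, so by Lemma~\ref{lem:ideal} the algebroid $\elli$ and its underlying ideal are recovered. The co-orientation $\frak{o}$ of $C(1)$ is likewise encoded in the Poisson structure: the transverse rotational part of $\pi_{\mathbb{J}}$ near the smooth part of $D$ singles out the orientation of the normal bundle (this is the same rotational $\partial_\theta$-direction that appears in the normal form), so $\frak{o}$ is intrinsic to $\pi_{\mathbb{J}}$. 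Having fixed $(I_{|D|}, \frak{o})$, I would then argue that the elliptic symplectic form $\omega$ is the unique closed nondegenerate section of $\wedge^2 \elli^*$ restricting to $(\omega^\flat)^{-1}$'s inverse on $M \backslash D$: since $M \backslash D$ is dense and $\omega$ is a smooth elliptic two-form, its values on $\elli$ are determined by continuity from the open part, so $\omega$ is recovered as an element of $\Omega^2(\elli)$, and it automatically lands in the subspace $\Omega^2_{\Im}(\elli)$ because it arose from a stable structure to begin with.

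The main obstacle I expect is the passage from the open-dense symplectic form on $M \backslash D$ to a well-defined \emph{elliptic} two-form on all of $\elli$, i.e.\ showing that the naive inverse of $\pi_{\mathbb{J}}$, which blows up in the ordinary tangent bundle along $D$, extends to a genuine nondegenerate pairing on the elliptic tangent bundle. This is where the precise elliptic structure matters: one must check that in Morse--Bott coordinates the components of $\omega$ against the frame $\{r_i\partial_{r_i}, \partial_{\theta_i}, \partial_{x_k}\}$ remain smooth and that nondegeneracy is preserved. The cleanest way to handle this is to work locally in the normal form of Lemma~\ref{lem:locMB}, invert $\pi_{\mathbb{J}}$ explicitly in these coordinates, and observe that the singular behaviour is exactly absorbed by the change from $\partial_{r_i}$ to $r_i \partial_{r_i}$, so no genuine obstruction arises. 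Once this local computation is in place, uniqueness follows from density of $M \backslash D$ and the fact that a smooth elliptic form is determined by its restriction to the open symplectic locus; combined with the recovery of $(I_{|D|}, \frak{o})$ above, this produces the full triple and hence, by Theorem~\ref{th:correspondence2}, the gauge equivalence class of $(\mathbb{J}, H)$.
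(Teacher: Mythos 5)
Your proposal follows, in its main steps, the same route as the paper's own proof: the paper also recovers $\omega$ by smooth continuation of $\pi_{\mathbb{J}}^{-1}$ from $M\backslash D$, recovers the divisor and the algebroid $\elli$ from the top power $\wedge^n\pi_{\mathbb{J}}$ (this is Lemma~\ref{lem:4.18}, whose proof is exactly your local Morse--Bott inversion argument), and then concludes via Theorem~\ref{th:correspondence2}. The step you add on top of this --- recovering the co-orientation $\mathfrak{o}$ from the ``transverse rotational part'' of $\pi_{\mathbb{J}}$ --- is, however, a genuine gap: that claim is false.

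The Poisson structure cannot see the co-orientation. On $\cc^2$ with $H=0$, consider the stable structures defined by the pure spinors $\rho = z_1 + dz_1\wedge dz_2$ and $\rho' = \bar{z}_1 - d\bar{z}_1\wedge d\bar{z}_2$, whose complex log symplectic forms are $\sigma = d\log z_1 \wedge dz_2$ and $\sigma' = -d\log \bar{z}_1 \wedge d\bar{z}_2 = -\bar{\sigma}$, taken with respect to the mutually conjugate divisors $\inp{z_1}$ and $\inp{\bar{z}_1}$. Since $\Im^*(-\bar{\sigma}) = \Im^*\sigma$, both structures have the same elliptic symplectic form and hence the same underlying Poisson bivector; yet they induce opposite co-orientations on $D = \set{z_1 = 0}$ and are not gauge equivalent, because a gauge transformation replaces a spinor $\rho$ by $e^B \wedge \rho$, so it preserves the degree-zero part and therefore the anticanonical ideal, while $\inp{z_1} \neq \inp{\bar{z}_1}$ (the function $\bar{z}_1/z_1$ has no continuous extension over $D$). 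Consequently no procedure can output $\mathfrak{o}$ from $\pi_{\mathbb{J}}$ alone; concretely, in the normal form $\pi_{\mathbb{J}} = \partial_\theta \wedge A + r\partial_r \wedge B$ near $D(1)$ one may replace $(\partial_\theta, A)$ by $(-\partial_\theta, -A)$, so the rotational direction is defined only up to sign. To be fair, this gap is shared by the paper: its proof feeds $\omega$ by itself into Theorem~\ref{th:correspondence2}, whose input is a \emph{co-oriented} divisor, and for a smooth component the membership $\omega \in \Omega^2_{\Im}(\elli)$ of Lemma~\ref{lem:degree2impart} reduces to $\Res_q\omega = 0$, which is insensitive to flipping the co-orientation --- so it silently skips the very point you tried to address. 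The example above shows the statement is only correct if the co-orientation (equivalently, the anticanonical complex log divisor) is treated as additional data, or if ``determined'' is weakened to ``determined up to conjugation along components of $D$''; your instinct that something had to be said here was right, but the co-orientation must be added as a hypothesis rather than extracted from $\pi_{\mathbb{J}}$.
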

\begin{proof}
In the symplectic locus the Poisson structure is given by $\omega^{-1}$. By smooth continuation $\omega^{-1}$ on $M\backslash D$ determines $\omega$ on $\elli$, which in turn determines $\mathbb{J}$ up to gauge equivalence by Theorem \ref{th:correspondence2}. The only point that needs attention in this argument is that we extended $\omega^{-1}$ from $M\backslash D$ to $\elli$ but we did not argue yet that the Poisson structure itself determines $\elli$. This is indeed the case, as shown by Lemma~\ref{lem:4.18} below.\end{proof}

\begin{lemma}\label{lem:4.18}
Let $I_{|D|}$ be an elliptic divisor. Given $\omega \in \Omega^2(\elli)$ an elliptic symplectic form, let $\pi = \rho(\omega^{-1})$ be its associate Poisson bivector on $M$, where $\rho\colon \elli \to TM$ is the anchor map. Then $(\wedge^nTM,\wedge^n\pi)$ defines the elliptic divisor $I_{|D|}$.

Conversely, if $(\wedge^nTM,\wedge^n\pi)$ defines an elliptic divisor $I_{|D|}$, then $\pi$ admits a nondegenerate lift to $\elli$ and hence defines an elliptic symplectic structure.
\end{lemma}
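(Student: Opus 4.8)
The plan is to prove the two implications separately. Throughout set $2n = \dim M$, so $\wedge^n\pi$ is a section of the top exterior power $\wedge^{\topp}TM$, and recall from Lemma~\ref{lem:ideal} that the anchor determines the section $\det_\rho \in \Gamma(\wedge^{\topp}\elli^* \otimes \wedge^{\topp}TM)$ whose vanishing ideal is exactly $I_{\abs{D}}$. The first implication will be a formal consequence of this, while the converse rests on a local normal-form argument.

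For the direct statement I would start from the elliptic symplectic form $\omega \in \Omega^2(\elli)$. Non-degeneracy of $\omega$ means the dual bivector $\omega^{-1}\in\Gamma(\wedge^2\elli)$ exists and that $\wedge^n\omega^{-1}$ is nowhere zero, i.e.\ $\wedge^n\omega^{-1}$ trivialises the line bundle $\wedge^{\topp}\elli$. Since $\pi = \rho(\omega^{-1})$ is the image of $\omega^{-1}$ under $\wedge^2\rho$, naturality of exterior powers gives $\wedge^n\pi = \det_\rho(\wedge^n\omega^{-1})$. As $\wedge^n\omega^{-1}$ is a nowhere-vanishing trivialisation, $\wedge^n\pi$ and $\det_\rho$ generate the same ideal, namely $I_{\abs{D}}$ by Lemma~\ref{lem:ideal}; hence $(\wedge^{\topp}TM,\wedge^n\pi)$ defines $I_{\abs{D}}$.

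For the converse I would construct the lift first over the symplectic locus and then extend it. Because $\wedge^n\pi$ generates $I_{\abs{D}}$ it vanishes exactly along $D$, so $\pi$ is non-degenerate on $M\setminus D$ and there $\rho$ is an isomorphism; thus $\tilde\pi := (\wedge^2\rho)^{-1}\pi \in \Gamma(\wedge^2\elli)$ is defined on $M\setminus D$. Granting that $\tilde\pi$ extends smoothly and non-degenerately across $D$, the inverse $\omega := \tilde\pi^{-1} \in \Omega^2(\elli)$ is a non-degenerate elliptic two-form; it is closed because $\rho$ is a Lie algebroid morphism restricting to an isomorphism over the dense set $M\setminus D$, so $[\tilde\pi,\tilde\pi]_{\elli}$ pushes forward to the Schouten bracket $[\pi,\pi]=0$ and, being smooth and vanishing on a dense set, vanishes identically, which is equivalent to $d\omega = 0$. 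Non-degeneracy of the extension is then forced by the identity $\det_\rho(\wedge^n\tilde\pi) = \wedge^n\pi$ together with the fact that $\det_\rho$ and $\wedge^n\pi$ generate the same ideal $I_{\abs{D}}$: this makes $\wedge^n\tilde\pi$ a nowhere-vanishing section of $\wedge^{\topp}\elli$.

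The smooth extension of $\tilde\pi$ across $D$ is the main obstacle, and I would settle it locally in the Morse--Bott coordinates of Lemma~\ref{lem:locMB}. Near a point of intersection number $j$ the fiber-product description $\rest{\elli}{U} \simeq \mathcal{A}_{\abs{D_1}} \times_{TM}\cdots\times_{TM}\mathcal{A}_{\abs{D_j}}$ of Lemma~\ref{lem:locfibproduct} reduces the question to the individual smooth branches: on a neighbourhood meeting only the $i$-th branch the hypothesis says $\wedge^n\pi$ generates the smooth elliptic ideal $\langle r_i^2\rangle$, so the smooth elliptic case of \cite{CG17} produces a smooth non-degenerate lift of $\pi$ to $\mathcal{A}_{\abs{D_i}}$. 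The functional independence of the branches then guarantees that these partial lifts are compatible under the anchors and assemble to a smooth section of the fiber product, yielding the desired extension of $\tilde\pi$ over the crossing strata. The point demanding care is precisely this compatibility at the deepest strata $D(j)$, where one must verify in coordinates that the divisibility conditions coming from each branch hold simultaneously; this is where both the Poisson condition on $\pi$ and the transversality built into the definition of a self-crossing elliptic divisor enter.
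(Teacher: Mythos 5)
Your first implication is correct, and in fact slightly more invariant than the paper's own argument: the paper simply writes $\omega^n = f\, d\log r_1\wedge d\theta_1\wedge\cdots\wedge dx_{2n}$ in the coordinates of Lemma~\ref{lem:locMB} and inverts, whereas you factor $\wedge^n\pi = \det_\rho(\wedge^n\omega^{-1})$ through Lemma~\ref{lem:ideal}; both are fine. In the converse, the pieces you do prove are also sound: uniqueness of the lift over $M\setminus D$, closedness of $\omega = \tilde\pi^{-1}$ by pushing $[\tilde\pi,\tilde\pi]_{\elli}$ forward and using density, and nondegeneracy of any smooth extension via comparison of $\det_\rho$ and $\wedge^n\pi$ as generators of the same divisor ideal.

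The gap is the smooth extension of $\tilde\pi$ across $D$, which you explicitly ``grant'' and then try to reduce to the smooth elliptic case of \cite{CG17} via the fiber-product description of Lemma~\ref{lem:locfibproduct}. That reduction only produces a lift over $M\setminus D(2)$: near a point $p \in D(k)$ with $k \geq 2$, every neighbourhood meets several branches, and there the hypothesis says that $\wedge^n\pi$ generates the product ideal $I_{\abs{D_1}}\cdots I_{\abs{D_k}}$, not any single smooth ideal $\langle r_i^2\rangle$. Consequently the smooth-case lifting theorem cannot be applied to any individual branch near $p$ --- indeed a nondegenerate lift of $\pi$ to $\mathcal{A}_{\abs{D_i}}$ cannot exist there, since $\wedge^n\pi$ vanishes on all of $D$ and not just on $D_i$ --- so there are no ``partial lifts'' to assemble precisely where the self-crossing case has new content. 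The divisibility statements you defer (``one must verify in coordinates that the divisibility conditions coming from each branch hold simultaneously'') are exactly the substance of the lemma, and your proposal neither proves them nor cites a result that does. The paper closes this step by invoking \cite[Theorem A]{K18}: since the divisor of $\wedge^n\pi$ agrees with the divisor ideal determined by $\elli$ (Lemma~\ref{lem:ideal}), and $\elli^*$ is locally generated by the closed one-forms $d\log r_i$, $d\theta_i$, $dx_k$ (immediate from Lemma~\ref{lem:locMB}), that theorem gives existence and nondegeneracy of the lift in one stroke, with the Poisson condition on $\pi$ entering as an essential hypothesis. To repair your argument you would either have to carry out the coordinate divisibility analysis at the deep strata, using $[\pi,\pi]=0$, or cite such a lifting criterion as the paper does.
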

\begin{proof}
In local coordinates  expressing $I_{|D|}$ as a normal crossing, $\omega^n$ is a volume form, hence there is a nonvanishing function, $f$, for which
\[
\omega^n = f d\log r_1\wedge  d\theta_1  \wedge  \dots \wedge d\log r_k\wedge  d\theta_k  \wedge dx_{2k+1}\wedge \dots dx_{2n}.
\]
Hence 
\[
\pi^n = \rho(\omega^{-n})= f^{-1} r_1\partial_{r_1} \wedge \partial_{\theta_1}  \wedge  \dots \wedge r_k\partial_{r_k} \wedge \partial_{\theta_k}   \wedge \partial_{x_{2k+1}}\wedge \dots \partial_{x_{2n}},
\]
which defines the divisor $I_{|D|}$.

Conversely, assume that $(\wedge^nTM,\wedge^n\pi)$ defines an elliptic divisor $I_{|D|}$. Due to \cite[Theorem A]{K18}, to prove that $\pi$ lifts to to $\elli$ it suffices to show that $\elli^*$ is locally generated by closed one-forms. From the local description in Lemma~\ref{lem:locMB} this is immediate.

Since, by Lemma~\ref{lem:ideal}, the ideal defined by $\elli$ is $I_{|D|}$, the lift above is non-degenerate, by \cite[Theorem A]{K18}.
\end{proof}
\subsection{Cohomology}\label{sec:ellcohomology}
Due to Theorem~\ref{th:correspondence2}, in order to study stable generalized complex structures, we can turn our attention to  symplectic forms in the associated elliptic tangent bundle that lie in the subcomplex given by $\Omega^\bullet_{\Im}(\elli)$. Therefore, the cohomology that is relevant to the study of these symplectic structures is not the elliptic cohomology but, $H^2_{\Im}(\elli)$, the cohomology of the subcomplex $\Omega^\bullet_{\Im}(\elli)$. We study this cohomology next.
\begin{proposition}\label{prop:cohimage}
Let $M$ be a manifold and let $I_{\abs{D}}$ be a co-oriented elliptic divisor. Then
\begin{align*}
H^i_{\Im}(\elli) \simeq H^i(M,M\backslash D) \oplus H^i(M\backslash D).
\end{align*}
\end{proposition}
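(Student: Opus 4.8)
The plan is to feed the short exact sequence of complexes from Proposition~\ref{prop:sessc},
\[
0 \to \Omega^{\bullet}(M;\rr) \to \Omega^{\bullet}(\mathcal{A}_D) \xrightarrow{\Im^*} \Omega^{\bullet}_{\Im}(\elli) \to 0,
\]
into its long exact sequence in cohomology and to rewrite every term topologically. By Theorem~\ref{thm:complexcom} the cohomology of the middle complex is $H^k(\mathcal{A}_D)\cong H^k(M\backslash D;\cc)$, the isomorphism being induced by restriction along $\iota\colon M\backslash D\hookrightarrow M$. Tracing the inclusion $\Omega^{\bullet}(M;\rr)\hookrightarrow\Omega^{\bullet}(\mathcal{A}_D)$ through this identification shows that the induced map $f_i\colon H^i(M;\rr)\to H^i(M\backslash D;\cc)$ is the ordinary restriction $r_i$ followed by the inclusion $c$ of real into complex cohomology. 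The decisive structural point is that, in the splitting $H^i(M\backslash D;\cc)=H^i(M\backslash D;\rr)\oplus i\,H^i(M\backslash D;\rr)$ coming from complex conjugation, the image of $f_i$ lies entirely in the first (real) summand, so the whole imaginary copy survives in the cokernel.

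From the long exact sequence I extract, for each $i$, the short exact sequence
\[
0 \to \operatorname{coker}(f_i) \to H^i_{\Im}(\elli) \to \ker(f_{i+1}) \to 0 .
\]
Since $c$ is injective we have $\operatorname{coker}(f_i)\cong\operatorname{coker}(r_i)\oplus i\,H^i(M\backslash D;\rr)$, the second factor being a full copy of $H^i(M\backslash D)$. To interpret the remaining pieces I would run the long exact sequence of the pair,
\[
\cdots\to H^i(M,M\backslash D)\xrightarrow{a_i} H^i(M)\xrightarrow{r_i} H^i(M\backslash D)\xrightarrow{\partial_i} H^{i+1}(M,M\backslash D)\to\cdots,
\]
which gives $\operatorname{coker}(r_i)\cong\ker(a_{i+1})$ and $\ker(f_{i+1})=\ker(r_{i+1})\cong\operatorname{im}(a_{i+1})$. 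Through the tautological sequence $0\to\ker(a_{i+1})\to H^{i+1}(M,M\backslash D)\to\operatorname{im}(a_{i+1})\to 0$ these two pieces assemble into a single relative cohomology group of the pair, so that $H^i_{\Im}(\elli)$ is an extension of a relative cohomology group by $H^i(M\backslash D)$, which already pins down its dimension.

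The step I expect to be most delicate is pinning down the map $f_i$ precisely, that is, verifying that under the isomorphism of Theorem~\ref{thm:complexcom} the inclusion of smooth real forms really does induce complexified restriction; this requires unwinding that isomorphism at the level of the local generators $\{1,d\log z_1,\dots,d\log z_j\}$ and checking compatibility with $\iota^*$. Granting this, the splitting of the short exact sequence for $H^i_{\Im}(\elli)$ is automatic over $\rr$, so the isomorphism holds on dimensional grounds; the more satisfying remaining task is to make it natural by exhibiting explicit closed elliptic representatives—the angular forms $d\theta_j=\Im^*(d\log z_j)$, which extend the generators of $H^{\bullet}(M\backslash D)$ and span that summand, and the radial forms $d\log r_j=\Im^*(i\,d\log z_j)$, which are exact away from $D$ but carry nontrivial residues and, together with the connecting homomorphism, feed the relative summand—and glueing them across the normal-crossing strata $C(i)$ via the local normal form of Lemma~\ref{lem:locMB}. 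A final point demanding care is the bookkeeping of degrees and of the co-orientation of $C(1)$: it is the connecting homomorphism that fixes the degree in which the relative group of the pair enters, and this should be reconciled with the codimension-two Thom isomorphism $H^{\bullet}(M,M\backslash D)\cong H^{\bullet-2}(D)$.
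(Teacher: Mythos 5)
Your argument is correct and is in substance the same proof as the paper's: both feed the short exact sequence of Proposition~\ref{prop:sessc} into cohomology, identify $H^\bullet(\mathcal{A}_D)$ with $H^\bullet(M\backslash D;\cc)$ through Theorem~\ref{thm:complexcom}, and split off the imaginary part. The only organizational difference is that the paper maps this sequence onto the second sequence $0 \to \Omega^{\bullet}(M;\rr) \to \Omega^{\bullet}(M\backslash D;\cc) \to \Omega^{\bullet}(M\backslash D;\cc)/\Omega^{\bullet}(M;\rr) \to 0$ and invokes the Five Lemma, so that the real/imaginary splitting happens at the cochain level in the quotient complex, whereas you perform the equivalent kernel--cokernel bookkeeping directly in the long exact sequence and then quote the long exact sequence of the pair; these are the same computation in different packaging. (Also, the step you single out as delicate is actually immediate: the isomorphism of Theorem~\ref{thm:complexcom} is induced by restriction along $\iota$, so the composite $\Omega^\bullet(M;\rr)\hookrightarrow\Omega^\bullet(\mathcal{A}_D)\to\Omega^\bullet(M\backslash D;\cc)$ is manifestly complexified restriction already at the level of forms.)

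The point you flag as unresolved at the end is not a gap in your argument, but it should be settled differently from what you propose. Your derivation genuinely yields $H^i_{\Im}(\elli) \cong H^i(M\backslash D) \oplus H^{i+1}(M,M\backslash D)$ when the relative group carries its standard singular (mapping-cone) indexing, and this is exactly what the paper's proof yields as well: the summand the paper writes as $H^i(M,M\backslash D)$ is, by construction, the degree-$i$ cohomology of the quotient complex $\Omega^{\bullet}(M\backslash D;\rr)/\Omega^{\bullet}(M;\rr)$, and the long exact sequence of that quotient identifies this group with the singular relative group one degree higher. So the statement is to be read with that quotient-complex convention for $H^i(M,M\backslash D)$; with this reading your conclusion and the paper's coincide, and the discrepancy is purely notational. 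In particular, do not try to absorb it into the Thom isomorphism: that shifts degree by two, not one, so it cannot reconcile anything here. A sanity check confirming that your indexing is the honest one: for $M=\cc$ with $D=\set{0}$, the classes $[d\theta]$ and $[d\log r]$ are linearly independent in $H^1_{\Im}(\elli)$, so this space is two-dimensional, matching $H^1(\cc^*)\oplus H^2(\cc,\cc^*)\cong\rr^2$ rather than $H^1(\cc^*)\oplus H^1(\cc,\cc^*)\cong\rr$.
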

\begin{proof}
We have the following morphism of cochain complexes
\[
\xymatrix{
0 \ar[r] & \Omega^{\bullet}(M,\rr) \ar[r] \ar[d] & \Omega^{\bullet}(\mathcal{A}_D) \ar[r] \ar[d] & \Omega^{\bullet}_{\Im}(\elli) \ar[r] \ar[d] & 0\\
0 \ar[r] & \Omega^{\bullet}(M,\rr) \ar[r]^(0.4){\iota^*_{M\backslash D}} & \Omega^{\bullet}(M\backslash D,\cc) \ar[r] & \Omega^{\bullet}(M \backslash D,\cc)/\Omega^{\bullet}(M,\rr) \ar[r] & 0
}
\]
where $\iota^*_{M\backslash D}$ is the natural inclusion, the middle vertical arrow  corresponds to restriction to $M\backslash D$ and the rightmost vertical arrow restriction composed with the quotient map. 

Therefore, we have the corresponding commutative diagram in cohomology:
\[
\xymatrix@R=18pt@C=12pt{
 \dots\ar[r] &H^\bullet(M) \ar[r] \ar[d] & H^\bullet(\mathcal{A}_D) \ar[r] \ar[d] & H^\bullet_{\Im}(\mathcal{A}_D) \ar[r] \ar[d] & H^{\bullet+1}(M) \ar[r] \ar[d] & H^{\bullet+1}(\mathcal{A}_D) \ar[d]\ar[r]& \dots\\
\dots\ar[r] & H^\bullet(M) \ar[r] & H^\bullet(M\backslash D,\cc) \ar[r] & C^\bullet \ar[r] & H^{\bullet+1}(M) \ar[r] & H^{\bullet+1}(M\backslash D,\cc) \ar[r]& \dots
}
\]
Here we let $C^\bullet$ denote the cohomology of the quotient complex. By Theorem \ref{thm:complexcom} and the Five Lemma we conclude that $H^\bullet_{\Im}(\elli) \simeq C^\bullet$. The quotient complex splits:
\begin{align*}
\Omega^{\bullet}(M\backslash D,\cc)/\Omega^{\bullet}(M,\rr) = \Omega^{\bullet}(M\backslash D,\rr)/\Omega^{\bullet}(M,\rr) \oplus i \Omega^{\bullet}(M\backslash D,\rr)
\end{align*}
from which we conclude that $C^\bullet \cong H^\bullet(M \backslash D) \oplus H^\bullet(M,M\backslash D)$. Here the last cohomology group is the relative cohomology of $M$ with respect to $M \backslash D$. Combining all of these isomorphisms we conclude that $H^\bullet_{\Im}(\elli) \simeq H^\bullet(M,M\backslash D) + i H^\bullet(M\backslash D) $ as desired.
\end{proof}

\section{Self-crossing stable structures in dimension four}\label{sec:4d}
In the remaining of this paper we will focus on stable generalized complex structures in four dimensions. From a practical point of view, dimension four is special because the way different components of the divisor intersect is very restricted, and at these intersections points the symplectic structure behaves as a meromorphic volume form. In this dimension it is particularly simple to state and prove a local normal form for a neighbourhood of a point (Theorem~\ref{thm:stablelocform}). Simple as this local form is, it has interesting consequences. Firstly, it can be used to show that every self-crossing stable generalized complex structure can be changed into a smooth stable structure (Theorem~\ref{th:smoothing}). Secondly it allows us to introduce a connected sum operation for stable  generalized complex structures (Theorem \ref{thm:ellipticglueing}).

\subsection{Normal forms}
Having related stable generalized complex structures to symplectic structures on a Lie algebroid, we have a wealth of symplectic techniques available to deal with them. One of the most basic tools from symplectic geometry,  the Moser Lemma, carries through to Lie algebroids with obvious modifications. This allows us to tackle deformations and corresponding neighbourhood theorems. As for regular symplectic structures, the (local) deformations are governed by the Lie algebroid cohomology in degree two. One new feature, however, is that the local cohomology of the elliptic tangent bundle is non-trivial and hence the local model must depend on parameters.

The next lemma is a direct adaptation of the Moser lemma.
\begin{lemma}
\label{lem:elliMoser}
Let $I_{\abs{D}}$ be an elliptic divisor on a manifold $M$, and let $\omega_1,\omega_2 \in \Omega^2_{\Im}(\elli)$ be two elliptic symplectic forms defined on a neighbourhood $U$ of a component of $D(i)$. If $[\omega_1] = [\omega_2] \in H^2_{\Im}(\elli)$ and $t\omega_1 + (1-t)\omega_2$ is non-degenerate for all $t\in [0,1]$, then there exists neighbourhoods $U_1, U_2 \subset U$ of $D(i)$ and a Lie algebroid isomorphism $\tilde\phi\colon \rest{\elli}{U_2} \rightarrow \rest{\elli}{U_1}$ such that $\tilde\phi^*\omega_1 = \omega_2$.
\end{lemma}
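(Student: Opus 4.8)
The plan is to mimic the classical Moser argument, transplanted to the elliptic tangent bundle $\elli$, and the main work is in interpreting the cohomological hypothesis correctly so that the primitive we produce lives in the right subcomplex. First I would observe that since $[\omega_1]=[\omega_2]$ in $H^2_{\Im}(\elli)$, the difference $\omega_1-\omega_2$ is exact in the subcomplex $\Omega^\bullet_{\Im}(\elli)$; that is, after possibly shrinking $U$ to a neighbourhood on which the relevant cohomology is represented correctly, there exists a one-form $\beta\in\Omega^1_{\Im}(\elli)$ with $d\beta=\omega_1-\omega_2$. Here one uses that a neighbourhood of a connected component of $D(i)$ deformation retracts onto that component, so the inclusion-induced map on the relevant cohomology groups (as computed in Proposition~\ref{prop:cohimage}) lets us realise the equality of classes by an \emph{actual} primitive on the neighbourhood, not merely up to coboundary on $M$.

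Next I would set up the Moser path $\omega_t := t\omega_1+(1-t)\omega_2$, which by hypothesis is non-degenerate as an elliptic two-form for every $t\in[0,1]$, and hence defines for each $t$ a time-dependent section $X_t\in\Gamma(\elli)$ by the equation $\iota_{X_t}\omega_t=-\beta$. Non-degeneracy of $\omega_t$ in the Lie algebroid sense guarantees $X_t$ is a well-defined smooth section of $\elli$. The point of working inside $\Omega^\bullet_{\Im}(\elli)$ is that $\beta$ lies in this subcomplex, so the vector field $X_t$ generated by contracting with the (also $\Im$-type) form $\omega_t$ will integrate to automorphisms of $\elli$ that respect the elliptic structure; I would note that the anchor $\rho(X_t)$ is a genuine (possibly singular, but tangent to the strata) vector field on $M$ that vanishes along $D$, which is exactly what forces the resulting flow to fix $D(i)$ and preserve the divisor ideal.

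I would then integrate $X_t$: since $\rho(X_t)$ vanishes along $D$ (each generator $r_k\partial_{r_k}$ and $\partial_{\theta_k}$ of $\elli$ has anchor tangent to the fibration by the local Morse--Bott coordinates of Lemma~\ref{lem:locMB}), the flow $\psi_t$ exists for $t\in[0,1]$ on a possibly smaller neighbourhood $U_1\subset U$ of $D(i)$, and lifts to a flow $\tilde\psi_t$ of Lie algebroid automorphisms of $\elli$. Setting $\tilde\phi:=\tilde\psi_1$ and $U_2:=\psi_1^{-1}(U_1)$, the standard Moser computation gives
\begin{equation*}
\frac{d}{dt}\tilde\psi_t^*\omega_t=\tilde\psi_t^*\bigl(\mathcal{L}_{X_t}\omega_t+\tfrac{d}{dt}\omega_t\bigr)=\tilde\psi_t^*\bigl(d\iota_{X_t}\omega_t+(\omega_1-\omega_2)\bigr)=\tilde\psi_t^*\bigl(-d\beta+d\beta\bigr)=0,
\end{equation*}
using Cartan's formula on $\elli$ together with $d\omega_t=0$, so that $\tilde\phi^*\omega_1=\tilde\psi_1^*\omega_1=\tilde\psi_0^*\omega_2=\omega_2$.

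The step I expect to be the main obstacle is the first one: producing the primitive $\beta$ \emph{within} the subcomplex $\Omega^\bullet_{\Im}(\elli)$ on the neighbourhood $U$, rather than merely an elliptic primitive. One must be careful that the equality of classes in $H^2_{\Im}(\elli)$, combined with the local structure of a neighbourhood of a component of $D(i)$, really yields a $\beta$ whose residue conditions (the kernels of the residue maps of Lemma~\ref{lem:degree2impart}) are preserved, so that the Moser vector field stays tangent to $\elli$ and the output automorphism is genuinely an isomorphism of elliptic tangent bundles. Verifying completeness of the flow on a small enough neighbourhood, and that $X_t$ fixes $D(i)$ pointwise so that the neighbourhoods $U_1,U_2$ of $D(i)$ can be chosen as claimed, is routine once the primitive is in hand.
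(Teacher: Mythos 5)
Your proposal is correct and is essentially the paper's proof: take a primitive $\alpha\in\Omega^1_{\Im}(\elli)$ of $\omega_1-\omega_2$ (immediate from the hypothesis $[\omega_1]=[\omega_2]\in H^2_{\Im}(\elli)$, since that cohomology is by definition computed from the subcomplex $\Omega^\bullet_{\Im}(\elli)$ on $U$, so the step you flag as the main obstacle is automatic), define $X_t\in\Gamma(\elli)$ by contraction against $\omega_t$, and take the time-one flow of the resulting time-dependent section to get the Lie algebroid isomorphism. The only inaccuracy is your claim that $\rho(X_t)$ vanishes along $D$ and that the flow fixes $D(i)$ pointwise --- it need not, since sections of $\elli$ may have components along the $\partial_{x_i}$ generators whose anchors do not vanish on $D$ --- but tangency of $\rho(X_t)$ to $D(i)$, which you also note, is exactly what the paper uses to obtain the neighbourhoods $U_1,U_2$.
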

\begin{proof}
The proof is nearly identical to the usual proof of the Moser Lemma: let $\alpha\in \Omega^1_{\Im}(\elli)$ be such that  $\omega_1 - \omega_2= d\alpha$ and define  $X_t \in \Gamma(\elli)$ by $(t\omega_1 + (1-t)\omega_2)(X_t) = \alpha$. Then the time-one flow, $\tilde\varphi$, of $X_t$ on $\elli$ (over the flow, $\varphi$, of $\rho(X_t)$ on $U$) will satisfy $\tilde\phi^*\omega_1 = \omega_2$. Since $X_t \in \Gamma(\elli)$, $\rho(X_t)$ is tangent to $D(i)$ and hence if we take a possibly smaller neighbourhood $U_2$ of $D(i)$ we have $\varphi(U_2) \subset U$ and taking $U_1 = \varphi(U_2)$ we obtain the result.
\end{proof}
A Darboux Theorem for points in $D(1)$, the smooth locus of the divisor, was already obtained in \cite{CG17}. In four dimensions the only extra stratum available is $D(2)$ and we present the version of the Darboux Theorem for those points now.
\begin{proposition}\label{prop:normformelliptic}
Let $(I_{\abs{D}},\frak{o})$ be a co-oriented elliptic divisor on $M^4$. Let $\omega \in \Omega_{\Im}^2(M)$ be an elliptic symplectic form and let $p \in D(2)$. Choose coordinates that express $I_{|D|}$ as the standard elliptic divisor in a neighbourhood of $p$ and denote its residues by
\begin{equation*}
\lambda_1 = \Res_{r_1\theta_2}\omega(p)= \Res_{\theta_1r_2}\omega(p), \qquad 	\lambda_2 = \Res_{r_1r_2}\omega(p) = - \Res_{\theta_1\theta_2}\omega(p).
\end{equation*}
Then $\lambda_1^2 + \lambda_2^2 \neq 0$ and there exists coordinates $(r_1,\theta_1,r_2,\theta_2)$ on a neighbourhood of $p$ on which $I_{|D|}$ is the standard elliptic divisor and $\omega$ is given by:
\[
 \lambda_1 (d\log r_1 \wedge d\theta_2 +  d\log \theta_1 \wedge d\log r_2) + 
 \lambda_2 (d\log r_1 \wedge d\log r_2 - d\theta_1 \wedge d\theta_2).
\]
\end{proposition}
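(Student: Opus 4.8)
The plan is to use the Moser argument provided by Lemma~\ref{lem:elliMoser}, reducing the normal form statement to a cohomological computation plus a nondegeneracy check. First I would establish that $\lambda_1^2 + \lambda_2^2 \neq 0$. This is a pointwise nondegeneracy statement: writing $\omega$ in the local coordinates of Lemma~\ref{lem:locMB}, the four residues $\lambda_1,\lambda_2$ are precisely the coefficients of the ``most singular'' part of $\omega$ at $p$, namely the terms in $d\log r_i$ and $d\theta_i$ that survive in the fibre of $\Ad$ over $p$. Since $\omega$ is nondegenerate as an elliptic two-form, $\omega^2$ is a nonvanishing section of $\wedge^4 \Ad^*$; evaluating $\omega^2$ on the frame $r_1\partial_{r_1}\wedge\partial_{\theta_1}\wedge r_2\partial_{r_2}\wedge\partial_{\theta_2}$ at $p$ produces (up to a nonzero constant) $\lambda_1^2 + \lambda_2^2$, so nondegeneracy forces this quantity to be nonzero. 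Here I would use that $\omega \in \Omega^2_\Im(\elli)$, so by Lemma~\ref{lem:degree2impart} the residues satisfy $\Res_{r_1\theta_2}\omega = \Res_{\theta_1 r_2}\omega$ and $\Res_{r_1 r_2}\omega = -\Res_{\theta_1\theta_2}\omega$, which is exactly what makes the two displayed parameters $\lambda_1,\lambda_2$ well-defined and matches the proposed model.

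Next I would introduce the candidate model form
\[
\omega_0 = \lambda_1 (d\log r_1 \wedge d\theta_2 + d\theta_1 \wedge d\log r_2) + \lambda_2 (d\log r_1 \wedge d\log r_2 - d\theta_1 \wedge d\theta_2),
\]
with $\lambda_1,\lambda_2$ the constant values of the residues of $\omega$ at $p$. By construction $\omega_0$ lies in $\Omega^2_\Im(\elli)$ (it is manifestly in the span described in the proof of Lemma~\ref{lem:degree2impart}), it is closed since its coefficients are constant and each factor is closed, and it has the same residues as $\omega$ at $p$. The strategy is then to apply Lemma~\ref{lem:elliMoser} to interpolate between $\omega$ and $\omega_0$ on a neighbourhood of the point-stratum $D(2)$ (which near $p$ is a single point, so ``component of $D(i)$'' is just $\{p\}$).

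To invoke the Moser lemma I must verify its two hypotheses. For the cohomological condition $[\omega]=[\omega_0]\in H^2_\Im(\elli)$, I would work on a contractible neighbourhood $U$ of $p$ and compute $H^2_\Im(\elli)$ there using Proposition~\ref{prop:cohimage}, which gives $H^2_\Im \cong H^2(U,U\backslash D)\oplus H^2(U\backslash D)$. On the standard model $U\backslash D$ is homotopy equivalent to $T^2$ by Lemma~\ref{lem:homotopytype}, so this local cohomology is finite-dimensional and its classes are detected precisely by the residue data at $p$; since $\omega$ and $\omega_0$ agree to all orders in their residues at $p$ by construction, their difference is exact in $\Omega^\bullet_\Im(\elli)$, i.e.\ $\omega - \omega_0 = d\alpha$ with $\alpha\in\Omega^1_\Im(\elli)$. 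For the convexity condition, I would note that along the segment $t\omega + (1-t)\omega_0$ the residues are constantly $(\lambda_1,\lambda_2)$ with $\lambda_1^2+\lambda_2^2\neq 0$, so by the first step the path is nondegenerate at $p$; nondegeneracy is an open condition, so after shrinking $U$ it holds on the whole neighbourhood for all $t\in[0,1]$. Lemma~\ref{lem:elliMoser} then yields a Lie algebroid isomorphism pulling $\omega_0$ back to $\omega$; by Lemma~\ref{lem:ideal} such an isomorphism covers a diffeomorphism preserving $I_{|D|}$, so in the new coordinates $I_{|D|}$ is again standard and $\omega$ equals the displayed model.

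\textbf{Main obstacle.} The delicate point is the cohomological matching: I must confirm that equality of the four residues $\lambda_1,\lambda_2$ at $p$ is genuinely enough to force $[\omega]=[\omega_0]$ in the two-dimensional local group $H^2_\Im(\elli)$, rather than merely matching the leading-order behaviour. Concretely this amounts to identifying the residue maps $\Res_{r_ir_j}$, $\Res_{r_i\theta_j}$, $\Res_{\theta_i\theta_j}$ with the generators of $H^2_\Im$ on the model $T^2$ dual to the classes of $d\log r_i, d\theta_i$, and checking that the $H^2(U\backslash D)$ and $H^2(U,U\backslash D)$ summands from Proposition~\ref{prop:cohimage} are both detected by exactly these residues. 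Once this identification is pinned down the rest is the standard Moser mechanism.
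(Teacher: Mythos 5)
Your proposal is correct and follows essentially the same route as the paper's proof: pointwise identification of $\omega(p)$ in the elliptic frame via Lemma~\ref{lem:degree2impart}, nondegeneracy of $\omega^2(p)$ giving $\lambda_1^2+\lambda_2^2\neq 0$, the constant-coefficient model $\omega_0$, cohomological matching via Proposition~\ref{prop:cohimage}, and the Moser Lemma~\ref{lem:elliMoser} applied to the component $\{p\}$ of $D(2)$. The ``main obstacle'' you flag—that equality of residues at $p$ detects the class in the local group $H^2_{\Im}(\elli)$—is genuine but resolves exactly as you sketch (residues of exact forms vanish at $p$ since the vector fields $r_i\partial_{r_i},\partial_{\theta_i}$ vanish there, so the residues give an isomorphism of the two-dimensional local cohomology with $\rr^2$), a point the paper's own proof leaves equally implicit.
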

\begin{proof}
Indeed, $\{d\log r_1, d\theta_1,d\log r_2,d\theta_2\}$ is a frame for $\Omega^1(\elli)$  in a neighbourhood of $p$. Hence we can write $\omega$ in terms of wedge products of these generators with functions as coefficients. At $p$ all these coefficients are residues and due to Lemma \ref{lem:degree2impart} we have
\[
\omega(p) =  \lambda_1 (d\log r_1 \wedge d\theta_2 +  d\log \theta_1 \wedge d\log r_2) +  \lambda_2 (d\log r_1 \wedge d\log r_2 - d\theta_1 \wedge d\theta_2).
\]
Since $\omega^2(p) \neq 0$, we have $\lambda_1^2 + \lambda_2^2 \neq 0$.

Now consider
\[
\omega_0 = \lambda_1 (d\log r_1 \wedge d\theta_2 +  d\log \theta_1 \wedge d\log r_2) +  \lambda_2 (d\log r_1 \wedge d\log r_2 - d\theta_1 \wedge d\theta_2)  \in \Omega_{\Im}^2(\elli).
\]
Since $\omega(p) = \omega_0(p)$, the convex combination $t\omega + (1-t)\omega_0$ is symplectic in a neighbourhood of $p$  for all $t \in [0,1]$ and Proposition \ref{prop:cohimage} implies that $[\omega] = [\omega_0] \in H_{\Im}^2(\elli)$. Since $\{p\}$ is a connected component of $D(2)$, the Moser Lemma gives us the desired diffeomorphism between $\omega$ and $\omega_0$.
\end{proof}

A direct consequence is a normal form for stable generalized complex structures.
\begin{theorem}\label{thm:stablelocform}
Let $M^4$ be a stable generalized complex manifold. Then for every point $p \in D(2)$ there exists complex coordinates $(z_1,z_2)$ around $p$ where the complex log divisor is the standard one and such that a local trivialisation of the canonical line bundle is given by the pure spinor
\begin{align*}
\rho = e^B(\lambda z_1 z_2 + dz_1 \wedge dz_2),
\end{align*}
for some $\lambda \in \cc$ and $B \in \Omega^2(M;\rr)$.
\end{theorem}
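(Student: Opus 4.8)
The plan is to obtain the spinor directly from the normal form for the elliptic symplectic part already established in Proposition~\ref{prop:normformelliptic}, feeding it through the dictionary of Theorem~\ref{th:correspondence}. Recall that under that correspondence the stable structure is encoded near $p$ by a complex log divisor $I_D$ together with a complex log symplectic form $\sigma$, and that a local trivialisation of the canonical bundle is $\rho = \rho_0\, e^{\sigma}$, where $\rho_0$ is a local generator of $I_D$. It therefore suffices to determine $\sigma$, up to the addition of a smooth real two-form, in coordinates where $I_D$ is standard; the claimed shape of $\rho$ then follows from a short computation.

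First I would normalise the imaginary part. The form $\omega := \Im^*\sigma$ is an element of $\Omega^2_{\Im}(\elli)$, and since $p \in D(2)$ Proposition~\ref{prop:normformelliptic} produces coordinates in which $I_{\abs{D}}$ is the standard elliptic divisor and $\omega$ equals
\[
\omega_0 = \lambda_1(d\log r_1 \wedge d\theta_2 + d\theta_1 \wedge d\log r_2) + \lambda_2(d\log r_1 \wedge d\log r_2 - d\theta_1 \wedge d\theta_2),
\]
with $\lambda_1^2 + \lambda_2^2 \neq 0$. Setting $z_j = r_j e^{i\theta_j}$ and comparing with the four identities of Lemma~\ref{lem:degree2impart}, one reads off at once that $\omega_0 = \Im^*\!\big(\lambda\, d\log z_1 \wedge d\log z_2\big)$ with $\lambda := \lambda_1 + i\lambda_2 \neq 0$.

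I would then reconstruct $\sigma$ and compute $\rho$. By Proposition~\ref{prop:sessc} the kernel of $\Im^*$ consists exactly of the smooth real two-forms, so any complex log symplectic form with imaginary part $\omega_0$ agrees with $\lambda\, d\log z_1 \wedge d\log z_2$ up to a term $B \in \Omega^2(M;\rr)$; thus $\sigma = \lambda\, d\log z_1 \wedge d\log z_2 + B$. Taking $\rho_0 = z_1 z_2$ and using $(d\log z_1 \wedge d\log z_2)^2 = 0$ together with $z_1 z_2\, d\log z_1 \wedge d\log z_2 = dz_1 \wedge dz_2$ gives
\[
\rho = z_1 z_2\, e^{B}\, e^{\lambda\, d\log z_1 \wedge d\log z_2} = e^{B}\big(z_1 z_2 + \lambda\, dz_1 \wedge dz_2\big).
\]
Since $\lambda \neq 0$, rescaling the trivialisation by the nonzero constant $\lambda^{-1}$ and renaming $\lambda^{-1}$ to $\lambda$ puts $\rho$ in the asserted form $e^{B}(\lambda z_1 z_2 + dz_1 \wedge dz_2)$.

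The hard part will be to ensure that the real coordinates delivered by Proposition~\ref{prop:normformelliptic} can genuinely be taken to be the real and imaginary parts of complex coordinates $(z_1,z_2)$ in which $I_D$ is the standard complex log divisor $\langle z_1 z_2\rangle$, since the normalising map there is only a diffeomorphism preserving the elliptic divisor and need not be holomorphic. I would settle this with Proposition~\ref{prop:ellivscomplex}: a complex log divisor is determined up to isomorphism by its elliptic divisor together with the co-orientation of $C(1)$, and the Moser flow underlying Proposition~\ref{prop:normformelliptic} is isotopic to the identity and so preserves that co-orientation. Hence in the normalised coordinates the complex log divisor still induces the standard elliptic divisor with the same co-orientation, and is therefore isomorphic to $\langle z_1 z_2\rangle$ (after conjugating a coordinate if needed to match the co-orientation), which is exactly what licenses the choice $\rho_0 = z_1 z_2$ in the computation above.
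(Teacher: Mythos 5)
Your proof is correct and takes essentially the same route as the paper's, which likewise applies Proposition~\ref{prop:normformelliptic} to $\Im^*\sigma$, recognises the resulting normal form as $\Im^*\bigl((\lambda_1+i\lambda_2)\,d\log z_1\wedge d\log z_2\bigr)$, and concludes via the correspondence of Theorem~\ref{th:correspondence2} (whose ingredients --- Theorem~\ref{th:correspondence}, Proposition~\ref{prop:ellivscomplex} and Proposition~\ref{prop:sessc} --- you invoke separately) that the spinor is $e^B(z_1z_2+\lambda\,dz_1\wedge dz_2)$ up to rescaling. Your write-up is in fact more detailed than the paper's one-line conclusion; the only loose point is the parenthetical ``conjugating a coordinate if needed'': conjugating a \emph{single} coordinate would destroy the identity $\omega_0=\Im^*(\lambda\,d\log z_1\wedge d\log z_2)$, but this case cannot occur since it would force $\lambda_1=\lambda_2=0$, contradicting non-degeneracy, while conjugating \emph{both} coordinates is harmless and merely replaces $\lambda$ by another nonzero constant.
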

\begin{proof}
By Theorem \ref{th:correspondence2}, gauge equivalence classes of stable generalized complex structures are in equivalence with elliptic symplectic forms $\omega\in \Omega^2_{\Im}(M)$. By Proposition \ref{prop:normformelliptic}, any such symplectic form is given, in appropriate coordinates, by
\begin{align*}
\omega & = \lambda_1 (d\log r_1 \wedge d\theta_2 +  d\log \theta_1 \wedge d\log r_2) +  \lambda_2 (d\log r_1 \wedge d\log r_2 - d\theta_1 \wedge d\theta_2) \\
& =  \Im^*((\lambda_1 + i \lambda_2)(d\log r_1 + i d \theta_1) \wedge (d\log r_2 + i d \theta_2)).
\end{align*}
Hence, up to the action of 2-forms, the stable generalized complex structure is given by
\[
(\lambda_1 + i \lambda_2)z_1 z_2 + dz_1 dz_2,
\]
with $z_j = r_j e^{i\theta_j}$ and the stated normal form follows.
\end{proof}
\subsection{Locally complex elliptic symplectic structures}
Theorem \ref{th:correspondence2} gives an equivalence between stable generalized complex structures and certain elliptic symplectic forms together with a co-orientation of the corresponding elliptic divisor. Of course, the main use of that result is to work on the symplectic side to conclude properties of the generalized complex structure. Here there is a minor difficulty: we must co-orient an elliptic divisor in the hopes of getting the desired residue relations, but there is no preferred way to do that step. That is, if we are interested in constructing a generalized complex structure on a given manifold $M$ with an elliptic divisor $I_{|D|}$,  we must choose a co-orientation for $D$ but must also  keep in mind that we may have started with the wrong choice.  We find it fruitful to  introduce a notion that is independent of the choice of co-orientation and will allow us to get a better grip on the problem.
\begin{definition}\label{def:locallycomplex}
Let $I_{\abs{D}}$ be an elliptic divisor, and let $\omega \in \Omega^2(\elli)$ be an elliptic symplectic form. We say that $\omega$ is \textbf{locally complex} if around every point there exists an open neighbourhood $U$ and a complex log divisor $I_D$ on $U$ inducing the restricted divisor $I_{\abs{D}}|_U$ together with a complex log symplectic form $\sigma \in \Omega^2(U;\mathcal{A}_D)$ such that $\Im^*\sigma = \omega$.
\end{definition}

Concretely, $\omega$ is locally complex if and only if its elliptic residue vanishes over $D(1)$ and over $D(2)$, for a choice of co-orientation of $D$, one of the following two possibilities holds:
\begin{align}
 (\Res_{\theta_ir_j}\omega - \Res_{r_i\theta_j} \omega = 0\qquad &\mbox{and}\qquad \Res_{r_ir_j}\omega+ \Res_{\theta_i\theta_j}\omega =0),\qquad \mbox{or} \label{eq:loc complex 1}\\
 (\Res_{\theta_ir_j}\omega + \Res_{r_i\theta_j} \omega = 0\qquad &\mbox{and}\qquad \Res_{r_ir_j}\omega- \Res_{\theta_i\theta_j}\omega =0),\label{eq:loc complex 2}
 \end{align}
with the second possibility indicating that the chosen co-orientation for one of the components of $D$ is not compatible with $\omega$.

In four dimensions, the existence of a locally complex elliptic symplectic structure forces the degeneracy locus to be of a very specific form.
\begin{proposition}\label{prop:topologydiv}
Let $M^4$ be a four-dimensional compact locally complex elliptic symplectic manifold with respect to a co-orientable elliptic divisor $I_{|D|}$. Then the connected components of $D$ are tori, spheres which intersect themselves in one point, or necklaces of spheres.
\end{proposition}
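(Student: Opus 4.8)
The plan is to attach to each component of the normalisation of $D$ a meromorphic residue $1$-form, to read off the genus and the number of self-crossings from a degree count for that form, and then to feed the result into an elementary graph-theoretic argument.

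First I would record the local picture of $D$. Since $\dim M = 4$ the intersection number is at most $2$, so $D(2)$ is a finite set (here compactness of $M$ is used) and $D(3) = \emptyset$; by Lemma~\ref{lem:immersed} the set $D$ is the image of an immersion of a closed surface whose only singularities are transverse double points, at each of which exactly two sheets $\set{z_1 = 0}$ and $\set{z_2 = 0}$ meet. Write $\tilde{D} = \bigsqcup_i \Sigma_i$ for the normalisation, so each $\Sigma_i$ is a closed oriented surface of some genus $g_i$; let $n_i$ be the number of points of $\Sigma_i$ lying over $D(2)$ (a self-crossing of $\Sigma_i$ contributes two such points, a crossing with another component one). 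The goal is to prove that $(g_i,n_i)$ equals $(1,0)$ or $(0,2)$, and then to analyse how the spheres are glued.

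The key construction is the residue $1$-form. Using co-orientability and Proposition~\ref{prop:ellivscomplex} I would fix a global complex log divisor $I_D$ inducing $(I_{\abs{D}},\frak{o})$, and use Definition~\ref{def:locallycomplex} to obtain, near each point, a complex log symplectic form $\sigma \in \Omega^2(\cplx)$ with $\Im^*\sigma = \omega$. For a component $\Sigma_i = \set{z_1 = 0}$ the residue $\iota^*_{\Sigma_i}\iota_{z_1\partial_{z_1}}\sigma$ is a closed complex $1$-form on the smooth part of $\Sigma_i$ (it is closed because $d\sigma$ is a smooth real form). Any two choices of $\sigma$ differ by a smooth real two-form by Proposition~\ref{prop:sessc}, and such a form has vanishing residue, so these local residues patch to a single closed $1$-form $\alpha_i$ on $\Sigma_i$. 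By the Darboux theorem for $D(1)$-points of \cite{CG17} one has $\sigma = c\, d\log z_1 \wedge dz_2$ with $c$ nowhere zero near a smooth point, so $\alpha_i$ is of type $(1,0)$ and nowhere zero there; near a node Theorem~\ref{thm:stablelocform} gives $\sigma = \tfrac{1}{\lambda}\,d\log z_1 \wedge d\log z_2$ with $\lambda = \lambda_1 + i\lambda_2 \neq 0$ by Proposition~\ref{prop:normformelliptic}, whence $\alpha_i = \tfrac{1}{\lambda}\,\tfrac{dz_2}{z_2}$ has a simple pole with nonzero residue and no zero.

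The degree count then finishes the analytic part. Since $\alpha_i \wedge \bar{\alpha}_i \neq 0$ everywhere, declaring $\alpha_i$ to be of type $(1,0)$ equips $\Sigma_i$ with an (automatically integrable) complex structure, which near each node agrees with the one in which $z_2$ is holomorphic and therefore extends across the nodes; being closed and of type $(1,0)$, $\alpha_i$ is then a genuine meromorphic $1$-form, with no zeros and a simple pole at each of the $n_i$ points over $D(2)$. Hence $2g_i - 2 = \deg K_{\Sigma_i} = -n_i$, forcing $(g_i,n_i) \in \set{(1,0),(0,2)}$: a torus with no node, or a sphere with exactly two nodes. To conclude I would form the graph $G$ whose vertices are the $\Sigma_i$ and whose edges are the crossings (a self-crossing being a loop). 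Tori are isolated vertices, hence embedded components; every sphere has degree $2$, so the subgraph spanned by the spheres is a disjoint union of cycles, a loop being a sphere meeting itself in one point and a cycle of length $k \geq 2$ a necklace of $k$ spheres. As connected components of $D$ correspond to connected components of $G$, this yields exactly the three asserted cases. The main obstacle is the middle step: verifying that the local residues glue to one $1$-form and that the complex structure it defines extends across the nodes, making $\alpha_i$ an honest meromorphic differential; once that is in place the degree count and the $2$-regular-graph classification are routine.
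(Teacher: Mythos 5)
Your proof is correct, but it takes a genuinely different route from the paper's. The paper argues with the modular vector field of the underlying Poisson structure: by the normal forms it is nowhere vanishing over $D\setminus D(2)$, tangent to each strand, and lifts to the normalisation with index-one zeros exactly at the preimages of $D(2)$, so Poincar\'e--Hopf forces each component of the normalisation to be either a torus (no zeros; the embedded case) or a sphere carrying exactly two preimages of $D(2)$; the concluding necklace combinatorics is the same as yours. You replace the modular vector field by the residue one-form of local complex log primitives, and Poincar\'e--Hopf by the degree of the canonical bundle of the Riemann surface structure that $\alpha_i$ itself defines. The two arguments are parallel---each is an Euler characteristic count on the normalisation, driven by an object that is nonvanishing over $C(1)$ and has prescribed first-order behaviour at the nodes---but the paper's is shorter, because the modular vector field is a single globally defined object (no patching step, no auxiliary complex structures), whereas your version buys a uniform treatment of the torus and sphere cases and the extra conclusion that each component of the normalisation is a compact Riemann surface carrying a zero-free meromorphic one-form whose polar divisor is exactly the preimage of $D(2)$.

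There is one imprecision, located exactly at the step you call the crux, though it is fixable inside your own framework. It is not true that the local primitives can be taken in $\Omega^2(\cplx)$ for the global complex log divisor $I_D$ fixed via Proposition~\ref{prop:ellivscomplex}: at a node whose intersection index with respect to $\mathfrak{o}$ is $-1$ (such nodes must occur in relevant examples, e.g.\ $\overline{\cc P^2}$ in Example~\ref{prop:barcp2}, whose divisor is a necklace of three spheres and has parity $-1$ for every co-orientation), condition \eqref{eq:loc complex 2} rather than \eqref{eq:loc complex 1} holds, so by Lemma~\ref{lem:degree2impart} there is no local $\sigma \in \Omega^2(\cplx)$ with $\Im^*\sigma = \omega$; the local divisor furnished by Definition~\ref{def:locallycomplex} differs from $I_D$ by conjugating one strand. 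Concretely, in coordinates compatible with $\mathfrak{o}$ the residue near such a node is $c\, d\log \bar z_2$, not $c\, d\log z_2$. Your argument survives this: the patching over $C(1)$ only needs a fixed co-orientation of each strand (two primitives for the same local divisor with equal imaginary parts differ by a smooth real form by Proposition~\ref{prop:sessc}, and smooth forms have vanishing residue), and since you define the complex structure on $\Sigma_i$ a posteriori from $\alpha_i$, at a negative node the extension simply goes through the structure in which $\bar z_2$ is holomorphic; $\alpha_i$ is still zero-free with a simple pole there, so the degree count $2g_i-2=-n_i$ is untouched. You should just weaken the sentence asserting $\sigma\in\Omega^2(\cplx)$ for the global algebroid to this strand-by-strand, co-orientation-relative statement.
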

\begin{proof}
It follows from the normal form from \cite{CG17}, that over $D(1)$ the modular vector field of the underlying Poisson structure is nowhere-vanishing. In particular, if a connected component $D'$ of $D$ is smooth and co-orientable, then it is orientable and has a nowhere vanishing vector field, hence is diffeomorphic to a torus.

Assume that a connected component $D'$ is immersed but not embedded and let $\tilde{D}'\rightarrow D'$ denote the immersion. Because $D' \backslash D(2)$ is a Poisson submanifold, the modular vector field of the Poisson structure is tangent to $D' \backslash D(2)$ and is nowhere-vanishing on $D' \backslash D(2)$. Moreover, in local coordinates as in Proposition~\ref{prop:normformelliptic} the modular vector field takes the form $-r_1\partial_{r_1}-r_2 \partial_{r_2}$, which lifts to a vector field on $\tilde D'$ with positive zeros at the pre-image of $D(2)$. Therefore each component of $D'$ is an orientable surface with positive Euler characteristic (equal to the number of points in $\tilde D'$ that map to $D(2)$. That is, each component of $D'$ is a sphere with two points in $D(2)$. Thus either the component of $D'$ intersects itself in one point, or it intersects another component(s) at two points. That component can in turn intersect the previous component or some other component. By compactness there are only finitely many components and thus these spheres have to form a necklace.
\end{proof}
Now we address the question of when a locally complex elliptic symplectic structure is actually induced by a complex log symplectic form. To do so we introduce the following notion.

\begin{definition}\label{def:checkmarktimes}
Let $M^4$ be an oriented manifold with a co-oriented elliptic divisor $(I_{\abs{D}},\frak{o})$. For each  $p\in D(2)$ let $D_1,D_2$ be the corresponding local normal crossing divisors.
\begin{itemize}
\item We say that the {\bf  intersection index of $p$ is $1$} if the isomorphism $T_p M \simeq N_p D_1 \oplus N_p D_2$ is orientation-preserving;
\item We say that the {\bf  intersection index of $p$ is $-1$} otherwise.\qedhere
\end{itemize}
\end{definition}

Since $ND_1$ and $ND_2$ are both two-dimensional their choice of ordering does not effect the orientation of the resulting direct sum $ND_1 \oplus ND_2$.

An elliptic symplectic structure always provides a preferred orientation for $M$. If the structure is also locally complex, we can verify whether an intersection point in $D(2)$ is positive or not by considering the values of the residues at that point.
\begin{lemma}
Let $(M^4,I_{\abs{D}},\frak{o})$ be a manifold endowed with a co-oriented elliptic divisor, and let $\omega$ be a locally complex elliptic symplectic form. 
Then $p \in D(2)$ is positive with respect to the orientation induced by $\omega$ if and only if  \eqref{eq:loc complex 1} holds.
\end{lemma}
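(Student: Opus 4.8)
The plan is to reduce the statement to an explicit computation of $\omega^2(p)$ in the standard Morse--Bott coordinates of Lemma~\ref{lem:locMB}, using the residue expansion of $\omega(p)$ that the locally complex hypothesis provides. First I would fix the co-orientation $\mathfrak{o}$ and choose coordinates $(r_1,\theta_1,r_2,\theta_2)$ realizing $I_{|D|}$ as the standard elliptic divisor, with the convention that $r_i\partial_{r_i}\wedge\partial_{\theta_i}$, equivalently $\partial_{x_i}\wedge\partial_{y_i}$ for $z_i=x_i+iy_i=r_ie^{i\theta_i}$, represents the $\mathfrak{o}$-orientation of $N D_i$. With this normalization the orientation of $N_pD_1\oplus N_pD_2$ is the one for which $dx_1\wedge dy_1\wedge dx_2\wedge dy_2$ is positive, so by Definition~\ref{def:checkmarktimes} the intersection index of $p$ equals $+1$ precisely when the orientation of $M$ induced by $\omega$ coincides with $dx_1\wedge dy_1\wedge dx_2\wedge dy_2$.

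Because $\omega$ is locally complex, its elliptic residue vanishes along $D(1)$, hence by continuity at $p$, so $\Res_{r_1\theta_1}\omega(p)=\Res_{r_2\theta_2}\omega(p)=0$. Expanding $\omega(p)$ in the coframe $\{d\log r_1,d\theta_1,d\log r_2,d\theta_2\}$ of $\Omega^1(\elli)$, the four surviving coefficients are exactly the mixed residues. Imposing \eqref{eq:loc complex 1} puts $\omega(p)$ into the normal form of Proposition~\ref{prop:normformelliptic}, namely $\lambda_1(d\log r_1\wedge d\theta_2+d\theta_1\wedge d\log r_2)+\lambda_2(d\log r_1\wedge d\log r_2-d\theta_1\wedge d\theta_2)$, while \eqref{eq:loc complex 2} yields instead $\mu_2(d\log r_1\wedge d\log r_2+d\theta_1\wedge d\theta_2)+\mu_1(d\theta_1\wedge d\log r_2-d\log r_1\wedge d\theta_2)$.

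Next I would square each expression by a direct wedge computation, obtaining in case \eqref{eq:loc complex 1} the value $\omega^2(p)=2(\lambda_1^2+\lambda_2^2)\,d\log r_1\wedge d\theta_1\wedge d\log r_2\wedge d\theta_2$ and in case \eqref{eq:loc complex 2} the value $\omega^2(p)=-2(\mu_1^2+\mu_2^2)\,d\log r_1\wedge d\theta_1\wedge d\log r_2\wedge d\theta_2$; nondegeneracy (Proposition~\ref{prop:normformelliptic}) makes the scalar factors strictly positive. To convert this elliptic top form into an honest orientation of $M$ I would use the identity $d\log r_i\wedge d\theta_i=r_i^{-2}\,dx_i\wedge dy_i$, valid on $M\backslash D$, giving $d\log r_1\wedge d\theta_1\wedge d\log r_2\wedge d\theta_2=(r_1^2r_2^2)^{-1}\,dx_1\wedge dy_1\wedge dx_2\wedge dy_2$ with a strictly positive coefficient. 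Hence the $\omega$-orientation agrees with $dx_1\wedge dy_1\wedge dx_2\wedge dy_2$ under \eqref{eq:loc complex 1} and is opposite to it under \eqref{eq:loc complex 2}, which by the first paragraph is exactly the assertion that $p$ is positive if and only if \eqref{eq:loc complex 1} holds. I would also note in passing that exactly one of \eqref{eq:loc complex 1}, \eqref{eq:loc complex 2} can hold, since their simultaneous validity forces all mixed residues to vanish and hence $\omega(p)=0$.

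The only genuinely delicate point, and the one I would be most careful about, is the bookkeeping relating the three orientations in play: the co-orientation $\mathfrak{o}$ on $N_pD_1\oplus N_pD_2$, the symplectic orientation of $\omega$ on the elliptic bundle, and the honest orientation of $T_pM$. The subtlety is that $\omega^2(p)$ is a priori only a top form for $\elli$ rather than for $TM$; the conversion factor $r_i^{-2}$ blows up along $D$ but is harmless for the sign because it is positive, so the comparison must be phrased as a statement about the continuous extension to $p$ of the symplectic orientation defined on the dense locus $M\backslash D$. Getting the signs right in the two wedge computations, and correctly matching the polar convention for $\theta_i$ to the chosen co-orientation $\mathfrak{o}$, is where the argument can most easily go astray.
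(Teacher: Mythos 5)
Your proof is correct, and it is precisely the computation the paper has in mind: the paper states this lemma without proof, as an immediate consequence of the residue characterization in Lemma \ref{lem:degree2impart} and the normal form of Proposition \ref{prop:normformelliptic}, and your residue expansion at $p$, the two wedge-square computations $\omega^2(p)=\pm 2(\lambda_1^2+\lambda_2^2)\,d\log r_1\wedge d\theta_1\wedge d\log r_2\wedge d\theta_2$, and the sign bookkeeping via $d\log r_i\wedge d\theta_i=r_i^{-2}\,dx_i\wedge dy_i$ all check out. The only step worth making explicit is that your case analysis is exhaustive: local complexity forces one of \eqref{eq:loc complex 1} or \eqref{eq:loc complex 2} to hold at $p$ (this is the paper's remark immediately preceding the lemma, where these equations are introduced), which together with your observation that they cannot hold simultaneously (by nondegeneracy) yields the ``only if'' direction.
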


This can be rephrased in more global terms.

\begin{lemma}\label{lem:checkmarkloccmplx}
Let $M^4$  be a manifold endowed with a co-oriented elliptic divisor $(I_{\abs{D}},\frak{o})$  and an elliptic symplectic form, $\omega$. The triple $(I_{|D|},\frak{o},\omega)$ is in the image of the map in Theorem \ref{th:correspondence2} if and only if $\omega$ is locally complex and each point in $D(2)$ has positive index.
\end{lemma}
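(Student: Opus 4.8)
The plan is to prove both implications by combining the purely local statement of the preceding (unnamed) lemma with the global correspondence of Theorem~\ref{th:correspondence2}. Recall that Theorem~\ref{th:correspondence2} identifies gauge-equivalence classes of stable generalized complex structures with triples $(I_{|D|},\frak{o},\omega)$ where $\omega \in \Omega^2_{\Im}(\elli)$. The subtlety that Lemma~\ref{lem:checkmarkloccmplx} addresses is that being \emph{locally complex} (Definition~\ref{def:locallycomplex}) only requires that around each point \emph{some} local complex log divisor inducing $I_{|D|}|_U$ exists with $\Im^*\sigma = \omega$; it does not require these local choices to be compatible with the \emph{globally} chosen co-orientation $\frak{o}$. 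The content of the lemma is precisely that this compatibility is equivalent to the positivity of the intersection index at every point of $D(2)$.

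First I would prove the forward implication. Suppose $(I_{|D|},\frak{o},\omega)$ is in the image of the map from Theorem~\ref{th:correspondence2}. Then by construction $\omega = \Im^* \sigma$ for a \emph{global} complex log symplectic form $\sigma \in \Omega^2(\cplx)$ on the complex log divisor $I_D$ associated to $(I_{|D|},\frak{o})$ via Proposition~\ref{prop:ellivscomplex}. Restricting $\sigma$ to any neighbourhood immediately shows $\omega$ is locally complex. For the index condition, I would argue pointwise at $p \in D(2)$: the co-orientation $\frak{o}$ together with $I_D$ equips each normal bundle $N_pD_i$ with a complex structure $J_i$, hence a preferred orientation, and the orientation of $M$ induced by $\omega$ is the one coming from the complex orientation on $N_pD_1 \oplus N_pD_2 = T_pM$. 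By the computations in Lemma~\ref{lem:degree2impart} (or directly from the normal form in Proposition~\ref{prop:normformelliptic}), the residues of $\Im^*\sigma$ at $p$ satisfy exactly relation~\eqref{eq:loc complex 1} rather than~\eqref{eq:loc complex 2}, and by the preceding lemma this is equivalent to $p$ being positive. Thus every point of $D(2)$ has positive index.

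For the converse, assume $\omega$ is locally complex and every point of $D(2)$ has positive index. By Definition~\ref{def:locallycomplex}, around each point there exists a local complex log divisor inducing $I_{|D|}$ together with a local $\sigma$ with $\Im^*\sigma = \omega$; the issue is to glue these into a structure compatible with the \emph{fixed} co-orientation $\frak{o}$ and thereby land in the image of Theorem~\ref{th:correspondence2}. The positive-index hypothesis, via the preceding lemma, guarantees that at every $p \in D(2)$ relation~\eqref{eq:loc complex 1} holds with respect to $\frak{o}$, which is exactly the condition characterising $\Omega^2_{\Im}(\elli)$ for the complex log divisor $I_D$ determined by $(I_{|D|},\frak{o})$ in Lemma~\ref{lem:degree2impart}. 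Over $D(1)$, local complexity already forces $\Res_q\omega = 0$. Hence $\omega \in \Omega^2_{\Im}(\elli)$ for this $I_D$, and Theorem~\ref{th:correspondence2} then produces a stable generalized complex structure whose associated triple is $(I_{|D|},\frak{o},\omega)$, placing it in the image.

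The main obstacle I anticipate is handling the interplay between the two locally available co-orientations at a self-intersection point and the single global one. Local complexity only asserts that \emph{for some choice} of co-orientation one of~\eqref{eq:loc complex 1} or~\eqref{eq:loc complex 2} holds, and a priori the two local strands through $p$ could each be locally complex while their \emph{globally} prescribed co-orientations from $\frak{o}$ disagree with the orientation forced by $\omega$. The positivity of the intersection index is precisely what rules this out: it certifies that the orientation $N_pD_1 \oplus N_pD_2 \to T_pM$ agrees with the $\omega$-orientation, so that~\eqref{eq:loc complex 1}, and not~\eqref{eq:loc complex 2}, is the relation satisfied with respect to $\frak{o}$. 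Making this identification precise---translating the orientation-preserving condition of Definition~\ref{def:checkmarktimes} into the sign choice among the residue relations---is the crux, and it is exactly what the immediately preceding lemma supplies, so the proof is largely a matter of assembling these pieces correctly.
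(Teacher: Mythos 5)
Your proposal is correct and follows essentially the same route as the paper: the paper's (one-sentence) proof likewise reduces the statement to Lemma~\ref{lem:degree2impart}, which identifies membership in $\Omega^2_{\Im}(\elli)$ with the residue relations~\eqref{eq:loc complex 1}, and then invokes the preceding unnamed lemma to translate those relations into positivity of the index at every point of $D(2)$. Your write-up merely spells out the two implications and the role of $D(1)$ (where local complexity forces $\Res_q\omega=0$) more explicitly than the paper does.
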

\begin{proof}
By Lemma \ref{lem:degree2impart},  $\omega \in \Omega^{2}_{\Im}(M)$ if and only if \eqref{eq:loc complex 1} holds, which by the previous Lemma corresponds to all intersection points being positive.
\end{proof}
Whenever a locally complex elliptic symplectic structure has a divisor with a few negative intersection points, we may try to fix this by flipping the co-orientation of one of the components arriving at that point. The problem with this is that such a change of co-orientation will also change the sign at `the other' intersection point of that component. Reflecting on this for a moment we see that the parity of the number of negative points is the relevant piece of data.
\begin{definition}
Let $(M,I_{|D|},\frak{o})$ be a four-manifold with co-oriented elliptic divisor and let  $\omega$ be a locally complex elliptic symplectic structure on $M$. If we denote by $\epsilon_p$ the index of $p \in D(2)$, the {\bf parity} of $\omega$ on a connected component $D'$ of $D$ is given by
\[
\varepsilon_{\omega,D'} = \Pi_{p\in D'(2)}\epsilon_p.
\]
If $D$ is connected, we omit $D$ from the notation and refer to $\varepsilon_\omega$ as the {\bf parity} of $\omega$.
\end{definition}

\begin{lemma}\label{lem:paritycomplexlog}
Let $(M,I_{|D|},\frak{o})$ be a four-manifold with co-oriented elliptic divisor and let $\omega \in \Omega^2(\elli)$ be a locally complex elliptic symplectic form. Then:
\begin{itemize}
\item  For each connected component $D'$ of $D$, the parity, $\varepsilon_{\omega, D'}$, does not depend on the choice of co-orientation;
\item  We have $\varepsilon_{\omega, D'} = 1$ for all connected components $D'$ of $D$ if and only if  there is a co-orientation $\frak{o}'$ of $D$ for which $(I_{|D|},\frak{o}',\omega)$ is in the image of the map in Theorem \ref{th:correspondence2}.\qedhere
\end{itemize}

\end{lemma}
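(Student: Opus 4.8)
The plan is to prove the two bullet points in order, treating each connected component $D'$ of $D$ separately since the parity $\varepsilon_{\omega,D'}$ is defined componentwise. The key structural input is Proposition \ref{prop:topologydiv}: each connected component $D'$ is either a smooth torus (with $D'(2) = \emptyset$, so the product over $p \in D'(2)$ is empty and $\varepsilon_{\omega,D'} = 1$ trivially), a single sphere self-intersecting in one point, or a necklace of spheres. In the necklace and self-intersecting-sphere cases, the relevant combinatorial object is the immersed surface $\tilde{D}' \to D'$ together with the finite set of double points, and a co-orientation of $D'$ is a co-orientation of $\tilde{D}'$.

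For the \textbf{first bullet}, I would analyse how the index $\epsilon_p$ of a point $p \in D'(2)$ changes when we flip the co-orientation along one of the two local strands (components of $\tilde{D}'$) passing through $p$. By Definition \ref{def:checkmarktimes}, $\epsilon_p$ is the sign comparing the orientation on $T_pM$ induced by the symplectic form with the orientation of $N_pD_1 \oplus N_pD_2$; flipping the co-orientation of exactly one normal bundle $N_pD_i$ reverses this comparison and hence flips $\epsilon_p$. The crucial observation is that each strand (connected component of $\tilde{D}'$) is a sphere carrying \emph{exactly two} double points in its preimage --- this is precisely what Proposition \ref{prop:topologydiv} establishes. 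Therefore flipping the co-orientation of one strand flips the index at \emph{both} of the double points lying on that strand, so the product $\varepsilon_{\omega,D'} = \Pi_{p \in D'(2)} \epsilon_p$ is unchanged. Since any change of co-orientation of $\tilde{D}'$ is a composition of such single-strand flips, $\varepsilon_{\omega,D'}$ is independent of the co-orientation.

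For the \textbf{second bullet}, by Lemma \ref{lem:checkmarkloccmplx} the triple $(I_{|D|},\frak{o}',\omega)$ lies in the image of the map of Theorem \ref{th:correspondence2} precisely when $\omega$ is locally complex (which holds by hypothesis) and every point of $D(2)$ has index $+1$ with respect to $\frak{o}'$, i.e. $\epsilon_p = 1$ for all $p$. The forward direction is then immediate: if such an $\frak{o}'$ exists then $\epsilon_p = 1$ for every $p$, so $\varepsilon_{\omega,D'} = 1$ on each component. For the converse, assume $\varepsilon_{\omega,D'} = 1$ for all $D'$; I must produce a co-orientation making every individual $\epsilon_p$ equal to $+1$. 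This is the step I expect to be the main obstacle, and it is essentially a combinatorial/graph-theoretic argument: model each component $D'$ as a graph whose vertices are the strands (spheres) of $\tilde{D}'$ and whose edges are the double points, each edge carrying a sign $\epsilon_p \in \{\pm 1\}$ determined by the current co-orientation, and where flipping a vertex's co-orientation negates the signs of all edges incident to it. Because each strand meets exactly two double points, every vertex has degree two, so the graph is a disjoint union of cycles (the necklace, or a single loop at one vertex in the self-intersecting sphere case). The condition that all edge-signs can be made $+1$ by vertex flips is exactly that the product of signs around each cycle --- which is $\varepsilon_{\omega,D'}$, invariant under flips by the first bullet --- equals $+1$. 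One then checks by hand (propagating a choice around the cycle, as with a $\mathbb{Z}/2$ cohomology/monodromy computation) that when this product is $+1$ a consistent assignment exists; the hypothesis $\varepsilon_{\omega,D'}=1$ supplies exactly this, and performing this on each component yields the required global co-orientation $\frak{o}'$.
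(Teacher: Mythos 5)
Your proposal is correct and takes essentially the same route as the paper: both arguments rest on Proposition~\ref{prop:topologydiv} and Lemma~\ref{lem:checkmarkloccmplx}, prove the first bullet by observing that each sphere strand carries exactly two double-point branches so that flipping a strand's co-orientation changes an even number of signs, and prove the converse in the second bullet by propagating a co-orientation around the necklace (your cycle-graph, vertex-flip formulation is a tidier packaging of the paper's ``break the necklace and fix co-orientations inductively'' argument). One minor imprecision to note: for a sphere crossing itself at a single point $p$, flipping the strand's co-orientation flips \emph{both} branches at $p$, so $\epsilon_p$ is unchanged rather than ``flipped at both double points'' --- exactly how the paper treats that case separately --- but your product-with-multiplicity count still yields invariance of the parity, so the conclusion stands.
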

\begin{proof}
The proof relies on Proposition \ref{prop:topologydiv}, which describes what a connected component of $D$ looks like. The case when $D$ is smooth and co-orientable was treated in \cite{CG17} so we need to consider the cases when $D$ has self-intersections.

If $D'$ has only one component which intersects itself in one point, $p$ if we change the co-orientation of $D'$ we change the co-orientation of both strands arriving at $p$ and hence the index of $p$ does not change. So $\varepsilon_{\omega,D'} =1$ if and only if the index of $p$ is 1 which, by Lemma \ref{lem:checkmarkloccmplx}, happens if and only if $(I_{|D|},\frak{o},\omega)$ is in the image of the map in Theorem~\ref{th:correspondence2} in a neighbourhood of $D'$.

In general, since each component has two intersection points, changing its co-orientation changes two signs and hence the parity, $\varepsilon_{\omega,D'}$, remains unchanged by this operation. The proof that this is the only relevant invariant can be done intuitively by breaking the necklace of spheres, $D'$, at one intersection point, so that we get an array of spheres and then fixing the co-orientation of `the first' component, $D_1$, of this array. Then we inductively keep or change the co-orientations of the next components one by one depending on whether the index of the next intersection point is positive or negative until we get to the last sphere, $D_n$. Since $D'$ is a necklace, not an array, there is one last index to be computed, namely the one between $D_n$ and $D_1$. Since the parity of the number of negative indexed points is fixed, this last index is positive if the parity is positive. Hence, again by Lemma \ref{lem:checkmarkloccmplx}, the parity is positive if and only if $(I_{|D|},\frak{o},\omega)$ is in the image of the map in Theorem \ref{th:correspondence2} in a neighbourhood of $D'$.

It is clear that $(I_{|D|},\frak{o},\omega)$ is in the image of the map in Theorem \ref{th:correspondence2} if and only if for each component $D'$ of $D$, $(I_{|D|},\frak{o},\omega)$ is in the image of the map in Theorem~\ref{th:correspondence2} in a neighbourhood of $D'$.
\end{proof}
\subsection{Smoothening self-crossing stable structures}
In this section we will show that if a four-dimensional manifold admits a stable generalized complex structure, then it also admits a smooth stable generalized complex structure.

Given $\varepsilon > 0$ consider the following two stable generalized complex structures on $\cc^2$:
\begin{equation}\label{eq:structures}
\rho_0 = \lambda z_1z_2 + dz_1\wedge dz_2, \quad \rho_1 = (\lambda z_1z_2 + \varepsilon) + dz_1 \wedge dz_2,
\end{equation}
which determine,  respectively,  the  complex log divisors:
\[
I_{D_0} = \inp{z_1z_2},\quad I_{D_1} = \inp{\lambda z_1z_2+\varepsilon},
\]
and corresponding complex log symplectic forms
\begin{equation}\label{eq:structures2}
\sigma_0 =  \frac{ dz_1 \wedge dz_2}{\lambda z_1z_2} \qquad \mbox{ and } \qquad  \sigma_1 = \frac{ dz_1 \wedge dz_2}{\lambda z_1z_2 +\varepsilon}.
\end{equation}

\begin{lemma}\label{lem:surglemma}
Let $\sigma_0$ and $\sigma_1$ be the complex log symplectic forms in \eqref{eq:structures2}. Then there are annuli $A_0,A_1  \subset \cc^2$ and a diffeomorphism $\Phi\colon A_0 \rightarrow A_1\subset \cc^2$ which is a morphism of divisors between $I_{D_1}$ and $I_{D_0}$ and satisfies $\Im^* \sigma_0 = \Phi^* \Im^* \sigma_1$. Moreover, the map $\Phi$ is ambient isotopic  to the natural inclusion $\iota\colon A_0 \to \cc^2\backslash \{0\}$.
\end{lemma}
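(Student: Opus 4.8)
The plan is to pass to coordinates adapted to the projection $\mu\colon\cc^2\to\cc$, $\mu(z_1,z_2)=z_1z_2$, since away from $\mu^{-1}(0)$ this is the fibration on whose fibres both forms restrict to something non-degenerate. On $\{z_2\neq 0\}$ set $u=z_1z_2$ and keep $z_2$, so that $z_1=u/z_2$ and $dz_1\wedge dz_2=z_2^{-1}\,du\wedge dz_2$. The forms of \eqref{eq:structures2} then read
\[
\sigma_0=\frac{1}{\lambda u}\,du\wedge d\log z_2,\qquad
\sigma_1=\frac{1}{\lambda u+\varepsilon}\,du\wedge d\log z_2,
\]
so they have the common shape $h(u)\,du\wedge d\log z_2$ and differ only through a factor on the base. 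Matching them therefore reduces to finding a change of the base variable carrying $\tfrac{du}{\lambda u+\varepsilon}$ to $\tfrac{du}{\lambda u}$; integrating the resulting ODE shows that every affine map $u\mapsto(cu-\varepsilon)/\lambda$ does the job, the cleanest being the translation $u\mapsto u-\varepsilon/\lambda$, which moves the pole $u=0$ of $\sigma_0$ onto the pole $u=-\varepsilon/\lambda$ of $\sigma_1$.

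I would then lift this translation to the explicit shear
\[
\Phi(z_1,z_2)=\Bigl(z_1-\tfrac{\varepsilon}{\lambda z_2},\,z_2\Bigr),
\]
a diffeomorphism of $\{z_2\neq0\}$ with inverse $(w_1,w_2)\mapsto(w_1+\tfrac{\varepsilon}{\lambda w_2},w_2)$. Set $A_0=\{a<|z_1z_2|<b\}$ for some $0<a<b$ and $A_1=\Phi(A_0)=\{a<|w_1w_2+\varepsilon/\lambda|<b\}$; both avoid their divisors ($z_1z_2=0$ on $A_0$ and $w_1w_2=-\varepsilon/\lambda$ on $A_1$). A one-line computation gives $\Phi^*(dw_1\wedge dw_2)=dz_1\wedge dz_2$ and $\Phi^*(\lambda w_1w_2+\varepsilon)=\lambda z_1z_2$; the latter shows $\Phi^*I_{D_1}=\langle z_1z_2\rangle=I_{D_0}$, so $\Phi$ is a morphism of divisors, while dividing the former by the latter yields $\Phi^*\sigma_1=\sigma_0$ as complex log forms on $A_0$. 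Since on the symplectic locus $\Im^*$ is the ordinary imaginary part and commutes with the real pullback $\Phi^*$, this gives at once $\Phi^*\Im^*\sigma_1=\Im^*\sigma_0$, as required.

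Finally I would exhibit the ambient isotopy. The straight-line family $\Phi_s(z_1,z_2)=(z_1-s\varepsilon/(\lambda z_2),z_2)$, $s\in[0,1]$, interpolates between $\Phi_0=\iota$ and $\Phi_1=\Phi$ through embeddings of $A_0$; because each $\Phi_s$ preserves $z_2\neq0$ its image never meets the origin, so this is an isotopy of embeddings $A_0\to\cc^2\setminus\{0\}$, and multiplying the shearing term by a bump function supported away from $\{z_2=0\}$ turns it into a compactly supported ambient isotopy of $\cc^2\setminus\{0\}$. I expect this last point to be the only genuinely delicate one: one must be certain that replacing the nodal divisor by the smooth one does not smuggle in the Dehn-twist monodromy of $\mu$ about its critical value. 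The explicit isotopy $\Phi_s$ certifies that it does not, consistently with the fact that a Dehn twist on the open cylinder fibre $\cc^*$ is isotopic to the identity. (Had I instead fixed $A_0=A_1$ and matched the forms by a Moser argument, the sole obstruction would be the period of $\Im^*\sigma_1-\Im^*\sigma_0$ over the $2$-torus generating $H^2(A_0)\cong\rr$, whose vanishing, by a residue computation, requires the annulus to enclose both poles; the explicit map makes this check unnecessary.)
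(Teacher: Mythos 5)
Your algebra is sound as far as it goes: the shear $\Phi(z_1,z_2)=(z_1-\varepsilon/(\lambda z_2),z_2)$ does satisfy $\Phi^*(\lambda w_1w_2+\varepsilon)=\lambda z_1z_2$ and $\Phi^*(dw_1\wedge dw_2)=dz_1\wedge dz_2$, hence $\Phi^*\sigma_1=\sigma_0$ wherever it is defined; indeed this very shear (and its mirror in the other variable) is the local building block of the paper's proof. The gap is in what you take the annuli to be. The regions $A_0=\{a<|z_1z_2|<b\}$ and $A_1=\Phi(A_0)$ are not annuli in the sense the lemma requires: they are unbounded neighbourhoods of the hypersurface $\{|z_1z_2|=c\}$, homotopy equivalent to $T^2$ rather than $S^3$, they bound no compact region, and -- as you yourself note -- they are disjoint from the divisors $D_0$ and $D_1$. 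On such regions both ideals $I_{D_0}$ and $I_{D_1}$ restrict to the unit ideal, so the ``morphism of divisors'' condition is vacuous and the statement you prove has essentially no content. The lemma is needed in Theorem \ref{th:smoothing} precisely to excise a compact ball $B'$ containing the crossing point of $D_0$ and glue in a compact ball containing the smooth hyperbola $D_1$, with the gluing map matching the two strands of $D_0$ crossing the annulus to the two ends of the hyperbola. For that, $A_0$ must be a spherical shell $B_R\setminus B_r$ around the origin -- which necessarily meets both branches of $D_0$ -- and $\Phi$ must carry $D_0\cap A_0$ onto $D_1\cap A_1$; ``cutting out'' the inside of your $A_0$ would instead remove a non-compact set containing both entire axes.

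This relocation of the annulus is not a cosmetic fix, because your single global shear blows up along $\{z_2=0\}$, which meets every spherical shell around the origin; so no one formula of this kind can work on a genuine annulus. The paper's proof resolves exactly this point: it uses the shear $(z_1,z_2)\mapsto(z_1,z_2-\varepsilon/(\lambda z_1))$ near the $z_1$-axis and the symmetric shear near the $z_2$-axis, cut off by bump functions, producing a diffeomorphism $\varphi$ of the shell which is a divisor morphism, pulls back $\sigma_1$ to $\sigma_0$ exactly only near the axes, and is the identity far from them. In the transition region the pullback identity fails, and the proof is completed cohomologically: one checks $[\varphi^*\Im^*\sigma_1]=[\Im^*\sigma_0]$ by a period computation over the torus $\{|z_1|=|z_2|=R\}$, proves nondegeneracy of the convex interpolation $t\varphi^*\Im^*\sigma_1+(1-t)\Im^*\sigma_0$ for $r$ large, and invokes the Moser lemma. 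Your closing parenthetical shows you saw the Moser alternative, but dismissed it as unnecessary; in fact the cut-off-plus-Moser step is unavoidable, and it is the heart of the lemma.
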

\begin{proof}
The proof relies on a version of the Moser argument: We will find annuli  $A_0$ and $A_1$ together with a diffeomorphism, $\varphi\colon A_0 \to A_1$, with the following properties
\begin{itemize}
\item $\varphi$ is a morphism of divisors between $I_{D_1}$ and $I_{D_0}$,
\item $\varphi^*\Im^*\sigma_1$ lies in the same cohomology class of $\Im^*\sigma_0$ and
\item  the line connecting $\varphi^*\Im^*\sigma_1$  and $\Im^*\sigma_0$ is made of symplectic forms.
\end{itemize}
Once we have found such a $\varphi$, it is clear that the result follows from the Moser argument.

We start by considering $A \subset \cc^2$, the complement of the polydisc of radius $2\varepsilon/|\lambda|$ on $\cc^2$, that is
\[A = \{(z_1,z_2)\colon |z_1| > 2\varepsilon/|\lambda| \mbox{ or } |z_2| > 2\varepsilon/|\lambda|\}.
\]
On $A$, we let
\begin{align*}
U &= \{(z_1,z_2) \in A\colon |z_1| < \sqrt{\varepsilon/|\lambda|} \mbox{ or }|z_2|< \sqrt{\varepsilon/|\lambda|}\},\\
V &= \{(z_1,z_2) \in A\colon |z_1| < 2\sqrt{\varepsilon/|\lambda|} \mbox{ or }|z_2|< 2\sqrt{\varepsilon/|\lambda|}\}.
\end{align*}
Notice that both $U$ and $V$  consist of two components, namely $U_1, V_1$, which are neighbourhoods of $\set{z_2 = 0} \cap A$, and $U_2, V_2$, which are neighbourhoods of $\set{z_1 = 0} \cap A$. Also, notice that the zeros of $\lambda z_1 z_2 + \varepsilon$ in $A$ also lie inside $U$, that is $U$ is also a neighbourhood of the zero locus of the divisor associated to $\rho_1$ (See Figure \ref{fig:picture}).

\begin{figure}
\begin{overpic}[unit=1mm,scale=0.2]{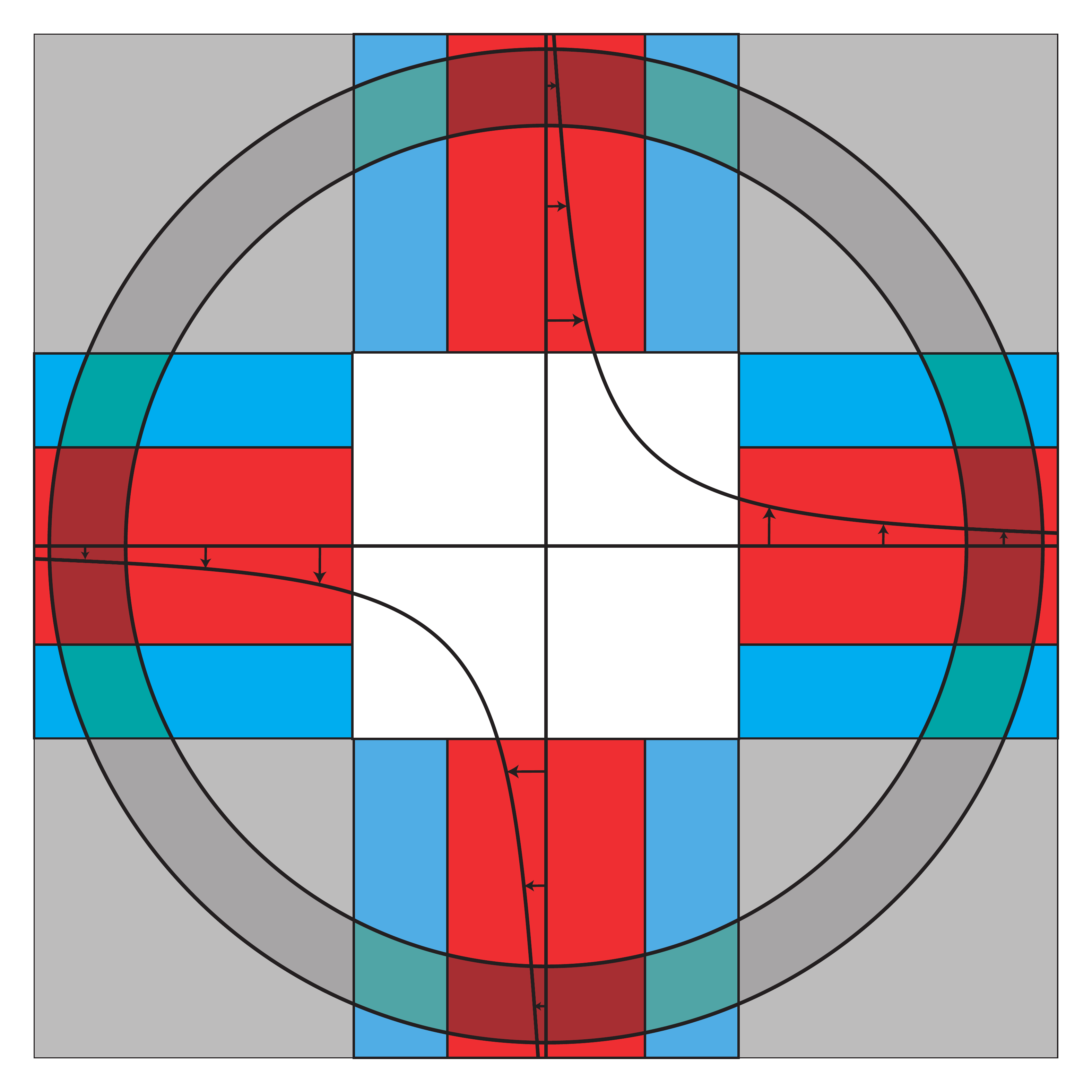}
\put(63,31){$U_1$}
\put(63,40){$V_1$}
\put(63,22){$V_1$}
\put(30,63){$U_2$}
\put(21,63){$V_2$}
\put(39,63){$V_2$}
\put(29,21){$X$}
\put(49,50){$A_0$}
\end{overpic}
\caption{The  divisor associated to $\rho_0$ are the coordinate axes while the divisor associated to $\rho_1$ is the hyperbola. In the complement of the polydisc of radius $2\varepsilon/|\lambda|$ both of these lie in $U$. The vector field $X$ flows the coordinate axes to the intersection of the hyperbola with $A$ and vanishes in the grey area.}\label{fig:picture}
\end{figure}

Let $\psi\colon [0,\infty) \to [0,1]$ be a monotone bump function which is $1$ on $[0,\tfrac{3}{2}\sqrt{\varepsilon/|\lambda|}]$ and zero on $[2\sqrt{\varepsilon/|\lambda|},\infty)$, and consider the following complex vector field:
\begin{align*}
X:=
\begin{cases}
-\frac{\psi(|z_2|)\varepsilon}{\lambda z_1}\partial_{z_2} \quad &\text{on } V_1,\\
-\frac{\psi(|z_1|)\varepsilon}{\lambda z_2}\partial_{z_1} &\text{on } V_2,\\
0 &\text{otherwise}.
\end{cases}
\end{align*}
The time-one flow of the real part of this vector field defines a diffeomorphism $\varphi\colon A \rightarrow  A$.

Since in $U_1$, $X = -\frac{\varepsilon}{\lambda z_1}\partial_{z_2}$, the flow of its real part is the shear transformation
\begin{equation}\label{eq:shear}
\varphi_t(z_1,z_2) = (z_1, z_2 -t\frac{\varepsilon}{\lambda z_1})
\end{equation}
as long as the flow remains in $U_1$. Hence the time-one flow satisfies
\begin{equation}\label{eq:U_1}
\varphi^*(\lambda z_1 z_2 +\varepsilon) = \lambda z_1 (z_2 -\frac{\varepsilon}{\lambda z_1}) + \varepsilon = \lambda z_1 z_2. 
\end{equation}
A similar computation holds in $U_2$ and hence $\varphi^*I_{D_1} = I_{D_0}$.
 
Since $\Im^*$ is a cochain map, to prove that $[\Im^*\varphi^*\sigma_1] = [\Im^*\sigma_0]$ it is enough to prove that  $[\varphi^*\sigma_1] = [\sigma_0] \in H^2(\mathcal{A}_{D_0})$.  By Theorem~\ref{thm:complexcom} we have that $H^2(A,\mathcal{A}_{D_0}) \simeq H^2(A\backslash D_0)$. Because $A \backslash D_0$ deformation retracts onto a torus,  for example, $T^2 = \set{\abs{z_1} = R, \abs{z_2} = R}$ with $R> 2\sqrt{\varepsilon/|\lambda|}$, we see that $H^2(A \backslash D_0) \simeq \rr$ and we are left to show that $\int_{T^2}\sigma_0 = \int_{T^2}\varphi^*\sigma_1$. This torus is disjoint from $V$, hence  $\varphi$ is the identity near $T^2$ and we compute
\begin{align*}
\int_{T^2} \varphi^*\sigma_1 &= \int_{\abs{z_1}=R} \int_{\abs{z_2}=R} \frac{dz_1 \wedge dz_2}{\lambda z_1z_2 + \varepsilon} \\
&=\int_{\abs{z_1}=R}\int_{\abs{z_2}=R} \frac{1}{\lambda z_1} \frac{dz_1 \wedge dz_2}{z_2 + \frac{\varepsilon}{\lambda z_1}}\\
&= 2\pi i \int_{\abs{z_1} = R} \frac{1}{\lambda} \frac{dz_1}{z_1} = \frac{-4\pi}{\lambda},
\end{align*}
which coincides with the integral of $\sigma_0$ over the same torus. Therefore we conclude that indeed $[\sigma_0] = [\varphi^*\sigma_1] \in H^2(A,\mathcal{A}_{D_0})$, and consequently that $[\Im^* \sigma_0] = [\varphi^* \Im^* \sigma_1] \in H^2(A,\mathcal{A}_{\abs{D_0}})$.

Next we will find a radius $r$ such that on $A\backslash B_r$ (the complement of the ball of radius $r$) the form $\sigma_t := t\varphi^*\sigma_1 + (1-t)\sigma_0$ is complex log symplectic for all values of $t$. To find $r$, we will study separately the behaviour of $\sigma_t$ in three regions: $U$, $A\backslash V$ and $V \backslash U$.

Let us start with the region $U$.  If $r>0$ is  large enough, the vector field $X$ becomes small on $A\backslash B_r$ and its time-one flow starting at $U_1$ is the shear transformation \eqref{eq:shear}, hence  we have $\varphi^*\sigma_1 = \sigma_0$. The same argument holds at $U_2$ and hence, for $r$ large enough, on $U \backslash B_r$,  $\sigma_t = \sigma_0$ is symplectic for all $t$.

Next we consider $A\backslash V$. In this region, $\varphi = \text{id}$, and therefore
\begin{align*}
\sigma_t &= t \frac{dz_1 \wedge dz_2}{\lambda z_1 z_2 + \varepsilon} + (1-t) \frac{dz_1 \wedge dz_2}{\lambda z_1 z_2}\\
&= \frac{\lambda z_1 z_2 + (1-t)\varepsilon}{(\lambda z_1z_2 + \varepsilon)(\lambda z_1 z_2)}dz_1 \wedge dz_2.
\end{align*}
Since  in this region $|z_1| > 2\sqrt{\varepsilon/|\lambda|}$ and $|z_2|>2\sqrt{\varepsilon/|\lambda|} $, the form above does not vanish and  hence its imaginary part is symplectic. 

Finally, we deal with $V\backslash (U\cup B_r)$. We will focus on $V_1\backslash U_1$. Apply the Mean Value Theorem to the map $t \mapsto \Im^*(\varphi_X^t)^*\sigma_1(p)$, we obtain the estimate
\[|\Im^*\sigma_1(p) -  \Im^*\varphi^*\sigma_1(p)| < |\Im^*\mathcal{L}_X\sigma_1(p')|.
\]
for $p'$ in line segment between $p$ and $\varphi(p)$. Using the explicit forms of $X$ and $\sigma_1$ we find
 \[|\Im^*\sigma_1 -  \Im^*\varphi^*\sigma_1| <  O(1/r^2).
\]
We can also estimate the difference
\[
|\Im^*\sigma_0 - \Im^*\sigma_1 | = \left|\Im^*\left(\frac{\varepsilon dz_0 dz_1}{\lambda z_0 z_1(\lambda z_0 z_1 + \varepsilon)}\right)\right| =O(1/r^2).
\]
Therefore, on $V_1\backslash (U_1 \cup B_r)$  we can estimate
\[
\Im^*\sigma_t =  \Im^*\sigma_0 + t(\Im^*\varphi^*\sigma_1-\Im^*\sigma_0) = \Im^*\sigma_0 - t((\Im^*\varphi^*\sigma_1-\Im^*\sigma_1)  + (\Im^*\sigma_1-\Im^*\sigma_0)) = \Im^*\sigma_0    - t O(1/r^2).
\]
Since $\Im^*\sigma_0 = O(1/r)$ the above is symplectic as long as $r$ is large enough. 

Therefore, by picking $r$ as above and $R >r$ we can apply the Moser Lemma to the annulus $A_0 = B_R\backslash B_r$ to find a diffeomorphism $\tilde\Phi\colon A_0 \to \tilde\Phi(A_0)$ such that $\tilde\Phi^* (\varphi^*\Im^*\sigma_1) = \Im^*\sigma_0$.   That is, $\Phi = \varphi \circ \tilde \Phi \colon A_0 \to (\varphi \circ \tilde \Phi)(A_0)$ is the diffeomorphism we were looking for. Since $\Phi$ is the composition of flows of vector fields it is ambient isotopic to the inclusion $A_0 \to \cc^2 \backslash \{0\}$.
\end{proof}
Using this lemma we can now prove that every locally complex elliptic symplectic structure can be changed into a smooth one by performing a surgery.
\begin{theorem}\label{th:smoothing}
Let $(M^4,I_{\abs{D}})$ be a manifold endowed with a co-oriented elliptic divisor, and let $\omega \in \Omega^2(\elli)$ be elliptic symplectic. Then:
\begin{itemize}
\item If $\omega$ is locally complex, then it can be changed into a smooth elliptic symplectic form with zero elliptic residue $\tilde\omega$;
\item The resulting structure will be induced by a stable generalized complex structure if and only if the original structure was.
\end{itemize}
\end{theorem}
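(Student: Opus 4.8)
The plan is to remove the self-intersections of the divisor one point at a time by a cut-and-reglue surgery built from Lemma~\ref{lem:surglemma}, and then to detect whether the smoothed structure is generalized complex by means of the parity invariant of Lemma~\ref{lem:paritycomplexlog}.

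First I would note that the self-crossing locus $D(2)$ is discrete, since it has codimension four in $M^4$; its points are therefore isolated and the surgeries can be carried out independently in disjoint coordinate balls. Fix $p\in D(2)$. As $\omega$ is locally complex, after possibly reversing the co-orientation of one local branch it satisfies \eqref{eq:loc complex 1} near $p$, hence lies in $\Omega^2_{\Im}(\elli)$ by Lemma~\ref{lem:degree2impart}; Proposition~\ref{prop:normformelliptic} then supplies coordinates in which $\omega=\Im^*\sigma_0$ with $\sigma_0=dz_1\wedge dz_2/(\lambda z_1z_2)$ as in \eqref{eq:structures2}. I would then apply Lemma~\ref{lem:surglemma}: excise the inner ball and reglue, along the annulus $A_0$ via $\Phi$, the region carrying $\Im^*\sigma_1$. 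Since $\Phi^*\Im^*\sigma_1=\Im^*\sigma_0$ and $\Phi$ is a morphism of divisors from $I_{D_1}$ to $I_{D_0}$, the elliptic forms agree on the overlap and glue, and because $\Phi$ is ambient isotopic to the inclusion the ambient manifold is unchanged up to diffeomorphism. The resulting divisor equals $I_{\abs{D}}$ outside the ball and the smooth divisor $I_{D_1}$ inside, so the crossing at $p$ is resolved and none are created. Iterating over $D(2)$ yields a smooth elliptic divisor with elliptic symplectic form $\tilde\omega$, and this $\tilde\omega$ has vanishing elliptic residue: outside the surgery balls $\tilde\omega=\omega$, whose elliptic residue over $D(1)$ vanishes by local complexity, while inside each ball $\tilde\omega=\Im^*\sigma_1$ is the imaginary part of a genuine complex log symplectic form for a smooth divisor and hence also has zero elliptic residue. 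This establishes the first bullet.

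For the second bullet I would use the parity. By Lemma~\ref{lem:checkmarkloccmplx} and Lemma~\ref{lem:paritycomplexlog}, $\omega$ is induced by a stable generalized complex structure, for a suitable co-orientation, precisely when $\varepsilon_{\omega,D'}=+1$ for every connected component $D'$ of $D$. The key point is that $\rho_1$ of \eqref{eq:structures} is itself a pure spinor for a smooth stable generalized complex structure. Thus at a positive crossing the difference $\sigma_0-\Phi^*\sigma_1$ has vanishing imaginary part, hence is a real two-form by Proposition~\ref{prop:sessc}, so $\sigma_0$ and $\Phi^*\sigma_1$ are gauge-equivalent complex log symplectic forms and the surgery glues one genuine stable structure to another; consequently, if $\omega$ is induced by a stable generalized complex structure then so is $\tilde\omega$. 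For the converse I would show that the smoothed component $\tilde D'$ is co-orientable if and only if $\varepsilon_{\omega,D'}=+1$: the complex, and hence orientable, resolution provided by $\rho_1$ joins the two branches at each crossing compatibly exactly when the intersection index is $+1$, so the global co-orientability of $\tilde D'$ is governed by the product of the local indices. Since for a smooth co-orientable divisor a locally complex elliptic symplectic form always comes from a stable generalized complex structure (the smooth case of \cite{CG17}, recorded in Lemma~\ref{lem:paritycomplexlog}), it then follows that $\tilde\omega$ is induced by a stable generalized complex structure if and only if every $\varepsilon_{\omega,D'}=+1$, that is if and only if $\omega$ is.

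The main obstacle is precisely this last identification: matching the parity $\varepsilon_{\omega,D'}$ with the co-orientability of the smoothed surface $\tilde D'$. One must verify that the complex resolution glues the branches meeting at a crossing orientation-compatibly iff that crossing has index $+1$, so that $\tilde D'$ becomes co-orientable iff the product of its indices is positive; a single negative crossing should resolve to a non-co-orientable surface (a Klein bottle in the self-intersecting-sphere case), which carries no complex log structure. This is the same bookkeeping as in the necklace argument of Lemma~\ref{lem:paritycomplexlog}, now applied to the resolved rather than the singular surface. The complementary verification, upgrading $\Phi$ from an identification of imaginary parts to a gauge equivalence of full generalized complex structures at positive crossings, is routine given Theorem~\ref{th:correspondence2} and Proposition~\ref{prop:sessc}.
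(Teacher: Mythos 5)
Your proposal is correct and takes essentially the same route as the paper: the first bullet is the identical cut-and-reglue surgery built from Lemma~\ref{lem:surglemma} applied at the finitely many (isolated) points of $D(2)$, using the local normal form to write $\omega=\Im^*\sigma_0$ near each crossing and replacing it by $\Im^*\sigma_1$. For the second bullet the paper tracks whether the normal-form charts of Theorem~\ref{thm:stablelocform} are orientation-preserving and whether the surgery maps respect co-orientations, which is exactly the data your parity invariant $\varepsilon_{\omega,D'}$ of Lemma~\ref{lem:paritycomplexlog} encodes (via the index of Definition~\ref{def:checkmarktimes}), so your reformulation --- including the gauge-equivalence observation via Proposition~\ref{prop:sessc} and the co-orientability bookkeeping for the smoothed divisor --- is a faithful, if slightly more explicit, rendering of the paper's own argument.
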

\begin{proof}
Since $D(2)$ has codimension four and is compact, it is a finite collection of points. Because $\omega$ is locally complex, given $p \in D(2)$ there is a neighbourhood of $p$ in which $\omega$ is the imaginary part of some complex log symplectic form $\sigma$. By Lemma \ref{thm:stablelocform} there exists complex coordinates $(z_1,z_2)$ on a ball, $B$,  around $p$ on which $\sigma$ is ($B$-field equivalent) to $\lambda d \log z_1 \wedge d \log z_2$. Note that as $\sigma$ is scaling-invariant in both the $z_1$- and $z_2$-direction, we can ensure that this ball is as large as necessary. Therefore we can apply Lemma \ref{lem:surglemma} to find an annulus $A_0 \subset B$ together with its accompanying diffeomorphism $\Phi\colon A_0 \rightarrow A_1$. We denote by $B' \subset B$ the inner ball enclosed by $A_0$. Let $\tilde{B} \subset \cc^2$ be the ball enclosed by the outer boundary of the annulus $A_1$. Because $\Phi$ is ambient isotopic to the inclusion of $A_0$ in $D^4\backslash \{0\}$, we have $M \simeq M\backslash B' \cup_{A,\phi} \tilde{B}$. If we endow $\tilde{B}$ with the smooth stable structure $\rho_1$ in Equation~\eqref{eq:structures}, we see by Lemma~\ref{lem:surglemma} that the map $\Phi$ is an elliptic symplectomorphism. We conclude that $M$ obtains an elliptic symplectic structure with $D(2)$ consisting of one point less. Moreover, as the surgery is purely local in nature we can ensure that the structure does not change around the remaining points in $D(2)$. By performing this procedure for all points in $D(2)$ we conclude that $M$ admits a smooth elliptic symplectic structure with zero elliptic residue. 

For the second part of the theorem, if $\omega$ was induced by a stable structure, then the local coordinates from Theorem \ref{thm:stablelocform} would be orientation-preserving. As all maps used in the surgery are orientation-reserving we conclude that the resulting divisor is co-orientable. We conclude that $M$ admits a smooth stable generalized complex structure. If $\omega$ is not induced by a stable structure, then there is at least one set of coordinates obtained from Theorem \ref{thm:stablelocform} which is orientation-reversing. Therefore the resulting divisor will not be co-orientable, which finishes the proof.
\end{proof}

\section{Connected sums}\label{sec:connectedsum}
In this section we will introduce a connected sum operation for elliptic symplectic structures in four dimensions. To do so we will make use of normal form results obtained in Section~\ref{sec:4d}. The operation will be phrased in terms of locally complex  symplectic forms (Definition~\ref{def:locallycomplex}) and it will be useful for us to keep track of the index of points as we perform the connected sum.
\subsection{Glueing divisors}
We will perform connected sums on elliptic symplectic four-manifolds at points which lie in their respective sets $D(2)$. In arbitrary dimensions it is possible to take connected sums of elliptic divisors at points with the same intersection number.
\begin{lemma}\label{lem:gluediv}
Let $(M^n,I_{\abs{D_M}})$, $(N^n,I_{\abs{D_N}})$ be two oriented manifolds endowed with elliptic divisors, and let $p \in D_M(k)$ and $q \in D_N(k)$ for $k \in \nn$. Then $M \#_{p,q} N$ admits an elliptic divisor $I_{\abs{D}}$, for which the inclusions $M\backslash \{p\}, N\backslash\{q\} \rightarrow (M \#_{p,q} N)$ are morphisms of divisors.
\end{lemma}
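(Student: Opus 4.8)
The plan is to reduce to the standard local model via Lemma~\ref{lem:locMB} and then realise the connected sum through an inversion of an annulus that reverses orientation yet preserves the standard elliptic ideal. Write $n = 2k+m$.

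First I would use Lemma~\ref{lem:locMB} to choose charts $\phi_M\colon U_p \to B \subset \cc^k\times\rr^m$ and $\phi_N\colon U_q \to B$, where $B$ is the ball of radius $2$ about the origin (after rescaling, which preserves the standard divisor since it scales its generator by a nonvanishing constant), with $\phi_M(p)=\phi_N(q)=0$ and such that both charts pull back the standard elliptic divisor $I_{\mathrm{st}} = \inp{\abs{z_1}^2\cdots\abs{z_k}^2}$ to $I_{\abs{D_M}}$, respectively $I_{\abs{D_N}}$. Using the orientations of $M$ and $N$, and composing if necessary with the reflection $y_1\mapsto -y_1$, which fixes $I_{\mathrm{st}}$ because it preserves $\abs{z_1}^2$, I may assume both charts are orientation-preserving.

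The key step is the gluing map. Writing $v=(z_1,\dots,z_k,x)$ and $\abs{v}^2=\sum_i\abs{z_i}^2+\abs{x}^2$, consider the inversion
\[
\psi\colon \set{1<\abs{v}<2}\to\set{1<\abs{w}<2},\qquad \psi(v)=\frac{2v}{\abs{v}^2}.
\]
This is an orientation-reversing involution of the open annulus that interchanges its two boundary spheres: its differential is $\tfrac{2}{\abs{v}^2}$ times the Householder reflection $I-2\hat v\hat v^T$, so $\det D\psi = -(2/\abs{v}^2)^n<0$. Crucially, it preserves the standard ideal, since $\abs{w_i}^2 = 4\abs{z_i}^2/\abs{v}^4$ gives $\psi^*(\abs{z_1}^2\cdots\abs{z_k}^2)=4^k\abs{v}^{-4k}\,\abs{z_1}^2\cdots\abs{z_k}^2$, and the factor $4^k\abs{v}^{-4k}$ is smooth and nowhere zero on the annulus, whence $\psi^*I_{\mathrm{st}}=I_{\mathrm{st}}$; note also that $\psi$ fixes the vanishing locus $\set{z_i=0 \text{ for some } i}$ setwise. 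I would then form $M\#_{p,q}N$ by removing $\phi_M^{-1}(\overline{B_1})$ and $\phi_N^{-1}(\overline{B_1})$ and identifying the collars $\phi_M^{-1}(\set{1<\abs{v}<2})$ and $\phi_N^{-1}(\set{1<\abs{w}<2})$ via $\phi_N^{-1}\comp\psi\comp\phi_M$; because $\psi$ reverses orientation and swaps the ends of the collar, this is exactly the oriented connected sum. Since $\psi^*I_{\mathrm{st}}=I_{\mathrm{st}}$, the ideals $I_{\abs{D_M}}$ and $I_{\abs{D_N}}$ agree on the overlap and glue to a global ideal $I_{\abs{D}}$ that equals $I_{\mathrm{st}}$ on the collar and $I_{\abs{D_M}}$, respectively $I_{\abs{D_N}}$, elsewhere, hence is a local normal crossing of smooth elliptic divisors everywhere and so an elliptic divisor in the sense of Definition~\ref{def:elli}. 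Finally the inclusions $M\setminus\set{p},\,N\setminus\set{q}\hookrightarrow M\#_{p,q}N$ are morphisms of divisors because $I_{\abs{D}}$ restricts on their images to $I_{\abs{D_M}}$ and $I_{\abs{D_N}}$ by construction.

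I expect the only genuine obstacle to be the construction and verification of the gluing map $\psi$: reconciling the requirement that it reverse orientation (forced by the connected-sum orientation convention) with the requirement that it preserve the normal-crossing ideal. The inversion resolves this precisely because it rescales the defining generator by a nowhere-vanishing function rather than altering its zero structure. The remaining points — the reduction to the standard model, that the glued ideal is again elliptic, and that the two inclusions are divisor morphisms — are then routine.
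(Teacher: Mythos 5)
Your overall strategy is the same as the paper's: reduce to the standard local model via Lemma~\ref{lem:locMB}, glue along an annulus by an inversion, and check that the pullback of the generator $r_1^2\cdots r_k^2$ is a nowhere-vanishing smooth multiple of itself, so the ideal is preserved. That part of your argument is correct. However, there is a genuine error at the one point where care is required, namely the orientation bookkeeping. You arrange both charts $\phi_M,\phi_N$ to be orientation-preserving and then glue by the pure inversion $\psi(v)=2v/\abs{v}^2$, asserting that ``because $\psi$ reverses orientation and swaps the ends of the collar, this is exactly the oriented connected sum.'' This is backwards. For the orientations of $M\setminus\set{p}$ and $N\setminus\set{q}$ to induce a single orientation on the glued manifold, the transition map $\phi_N^{-1}\comp\psi\comp\phi_M$ on the overlap must be orientation-\emph{preserving}; since your $\psi$ reverses orientation, your construction produces $M\#\,\overline{N}$ (the sum with $N$ carrying the reversed orientation) rather than $M\#_{p,q}N$. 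In general these are not diffeomorphic: for $M=N=\cc P^2$ your gluing yields $\cc P^2\#\,\overline{\cc P^2}$ (signature $0$) instead of $\cc P^2\#\,\cc P^2$ (signature $2$). In this paper the distinction is not cosmetic: the index and parity bookkeeping of Theorem~\ref{thm:ellipticglueing} and the families $\#n\,\cc P^2\#m\,\overline{\cc P^2}\#\ell(S^1\times S^3)$ of Theorem~\ref{th:examples} depend on exactly which oriented summand is attached.

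The repair is small, and you already have the needed tool in hand: compose $\psi$ with the reflection $z_k\mapsto\bar{z}_k$ (your own reflection $y_1\mapsto-y_1$ works equally well), which preserves $I_{\mathrm{st}}$ because $\abs{\bar{z}_k}^2=\abs{z_k}^2$, reverses orientation, and maps each sphere $\set{\abs{v}=c}$ to itself. The composite is then an orientation-preserving diffeomorphism that still swaps the two ends of the annulus and still satisfies $\psi^*I_{\mathrm{st}}=I_{\mathrm{st}}$, and the remainder of your argument (the glued ideal is locally a normal crossing of smooth elliptic divisors, and the two inclusions are divisor morphisms) goes through verbatim. This corrected map is precisely the paper's map $F$ of \eqref{eq:inversion}, inversion in the radius together with the flip $\phi_n\mapsto-\phi_n$ of the last spherical angle, and Remark~\ref{rem:orientation?} is devoted to exactly this point: the sign flip is needed precisely because both charts are taken compatible with the orientations of $M$ and $N$.
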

\begin{proof}
Using Lemma \ref{lem:locMB} there exist coordinates around $p$ and $q$ such that both divisors are precisely given by the ideal $I = \inp{r_1^2\cdot \ldots \cdot r_k^2}$. To take the connected sum of $M$ and $N$ at $p$ and $q$, we need to use an orientation-preserving diffeomorphism, $F$, from an annulus around $p$ to an annulus around $q$, which reverses the co-orientation of the sphere and for which $F^*I = I$. Here we consider oriented charts defined in neighbourhoods of $p$ and $q$ which map to the unit ball in $\rr^n$ and we will use the diffeomorphism given in spherical coordinates by
\begin{equation}\label{eq:inversion}
F : \rr^n\backslash \set{0} \rightarrow \rr^n \backslash \set{0},\quad (r,\phi_1,\ldots,\phi_n) \mapsto (r^{-1},\phi_1,\ldots,-\phi_n).
\end{equation}
To verify that $F^*I = I$ we write $r_i^2 = r^2 \psi_i(\phi_1,\ldots,\phi_n)$, where $\psi_i$ is a function which only depends on the angular coordinates and satisfies $\psi_i(\phi_1,\dots, -\phi_n) = \psi_i(\phi_1,\dots, \phi_n)$. We have
\begin{equation*}
	F^*(r_i^2) = \frac{1}{r^2}\psi_i(\phi_1,\ldots,\phi_n), \text{ and thus } r^4F^*(r_i^2) = r_i^2.
\end{equation*}
Hence $r^{4k}F^*(r_1^2\cdot \ldots \cdot r_k^2) = r_1^2\cdot \ldots \cdot r_k^2$, and as $r^{2k}$ is a non-zero function on $\rr^n\backslash \set{0}$ we conclude that $F^*I=I$.
\end{proof}
There are a couple of points about this construction that we should stress.
\begin{remark}\label{rem:multipledivs}
There is some freedom in the glueing of elliptic divisors. Given a choice of local coordinates $(z_1,\ldots,z_m)$ around $p$ and $q$ we can furthermore compose the map $F$ by a permutation of the first $k$ coordinates. Note that this does not change $M \#_{p,q} N$, but it could change the topology of the zero locus of the divisor. Because of this ambiguity in ordering, there are potentially $k!$ different topological types for the vanishing locus of the divisor on the connected sum $M \#_{p,q} N$.
\end{remark}

\begin{remark}\label{rem:orientation?}
The reason for flipping the sign of $\phi_n$ in the map in \eqref{eq:inversion} is that we assumed that the coordinates we chose are compatible with the orientations of $M$ and $N$. If for some reason only one of the chosen coordinates was compatible with the orientations of $M$ and $N$, we should not flip the sign of $\phi_n$. 
\end{remark}

\subsection{Glueing symplectic structures}
We will introduce a connected sum operation for elliptic symplectic structures in dimension four. The existence of such an operation contrasts starkly with ordinary symplectic geometry, where connected sums are not possible above dimension two. We first note that $F$ introduced in \eqref{eq:inversion} is a local symplectomorphism for a specific order of the coordinates.
\begin{lemma}\label{lem:glue}
Let $(z_1,z_2)$ be complex coordinates on $M = \cc^2\backslash \set{0}$, and consider the elliptic symplectic structure on $M$ given by
\begin{align*}
\omega = \Im^*(i\, d\log z_1 \wedge d\log z_2).
\end{align*}
Using polar coordinates
\[(z_1,z_2) = (r \cos \phi_1,r\sin \phi_1 \cos \phi_2,r\sin \phi_1\sin \phi_2 \cos \phi_3,r\sin \phi_1\sin \phi_2 \sin \phi_3),
\]
the map defined in \eqref{eq:inversion} satisfies
\[ F^*\omega = -\omega.\]
\end{lemma}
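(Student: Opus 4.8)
The plan is to recast the map $F$ in the complex coordinates $(z_1,z_2)$, record its effect on the moduli and arguments of the $z_j$, and then substitute into the explicit expression for $\omega$ furnished by Lemma~\ref{lem:degree2impart}. Writing $z_1 = x_1 + ix_2$ and $z_2 = x_3 + ix_4$, the polar parametrisation in the statement is exactly the standard hyperspherical coordinate system on $\rr^4 \cong \cc^2$. Since cosine is even and sine is odd, the inversion \eqref{eq:inversion} acts by $(x_1,x_2,x_3,x_4) \mapsto r^{-2}(x_1,x_2,x_3,-x_4)$ with $r^2 = \abs{z_1}^2 + \abs{z_2}^2$, which in complex form reads
\begin{equation*}
F(z_1,z_2) = \left(\frac{z_1}{\abs{z}^2}, \frac{\bar{z}_2}{\abs{z}^2}\right), \qquad \abs{z}^2 := \abs{z_1}^2 + \abs{z_2}^2.
\end{equation*}
In particular $F$ preserves the ideal $\inp{\abs{z_1}^2\abs{z_2}^2}$ (this is the $k=2$, $n=4$ case of Lemma~\ref{lem:gluediv}) and hence lifts to $\elli$, so that $F^*\omega$ is a well-defined elliptic form.

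Next I would set $\rho_j = \abs{z_j}$ and $\theta_j = \arg z_j$, the radial and angular coordinates of the two branches of $D$, so that $d\log\rho_j$ and $d\theta_j$ are the standard elliptic generators and Lemma~\ref{lem:degree2impart} gives $\omega = d\log\rho_1\wedge d\log\rho_2 - d\theta_1 \wedge d\theta_2$. From the complex form of $F$, and using that $\abs{z}^2$ is real and positive, one reads off the pullbacks $F^*\rho_j = \rho_j/\abs{z}^2$, $F^*\theta_1 = \theta_1$ and $F^*\theta_2 = -\theta_2$. Writing $w := \log\abs{z}^2$, these give $F^* d\log\rho_j = d\log\rho_j - dw$, $F^* d\theta_1 = d\theta_1$ and $F^* d\theta_2 = -d\theta_2$, whence
\begin{equation*}
F^*\omega = (d\log\rho_1 - dw)\wedge(d\log\rho_2 - dw) + d\theta_1 \wedge d\theta_2.
\end{equation*}

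The main obstacle — and the one genuinely nontrivial point — is that the cross-terms involving $dw$ do not obviously cancel; they do so only because $w$ is constrained by $\abs{z}^2 = \rho_1^2 + \rho_2^2$. Setting $a = \rho_1^2/\abs{z}^2$ and $b = \rho_2^2/\abs{z}^2$, so that $a + b = 1$, one has $dw = 2a\, d\log\rho_1 + 2b\, d\log\rho_2$. Substituting this and using $dw\wedge dw = 0$, the cross-terms collapse:
\begin{equation*}
(-d\log\rho_1 + d\log\rho_2)\wedge dw = -2(a+b)\, d\log\rho_1\wedge d\log\rho_2 = -2\, d\log\rho_1\wedge d\log\rho_2,
\end{equation*}
so that $(d\log\rho_1 - dw)\wedge(d\log\rho_2 - dw) = -d\log\rho_1\wedge d\log\rho_2$. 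Plugging this into the displayed expression for $F^*\omega$ gives $F^*\omega = -d\log\rho_1\wedge d\log\rho_2 + d\theta_1\wedge d\theta_2 = -\omega$, as desired. The entire argument is a short computation once $F$ is expressed in complex coordinates; the only subtlety worth flagging is that naively $F^*\omega$ appears to differ from $-\omega$ by the $dw$ cross-terms, and these vanish precisely through the relation $a+b=1$ coming from $\abs{z}^2 = \rho_1^2+\rho_2^2$.
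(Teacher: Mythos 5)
Your proof is correct and takes essentially the same route as the paper: the paper's entire proof consists of the observation that $F(z_1,z_2) = \tfrac{1}{r^2}(z_1,\bar{z}_2)$ in complex coordinates, followed by the words ``direct computation.'' You have simply carried that computation out in full, and your identification of the genuinely nontrivial step --- the cancellation of the $dw$ cross-terms via the relation $a+b=1$ coming from $r^2 = \rho_1^2 + \rho_2^2$ --- is exactly the content the paper leaves to the reader.
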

\begin{proof}
The proof is a direct computation using that we can express $F$ in complex coordinates as
\[
F(z_1,z_2) = \frac{1}{r^2}(z_1,\bar{z}_2).\qedhere
\]
\end{proof}
\begin{definition}
Let  $\omega \in \Omega^2(\elli)$ be a locally complex elliptic symplectic form. We say that $\omega$ has \textbf{imaginary parameter} at a point $p \in D(2)$ if 
\[
\Res_{r_1\theta_2}\omega(p) = \Res_{r_2\theta_1}\omega(p) = 0.\qedhere
\]
\end{definition}
Note that this definition does not depend on a choice of co-orientation.
\begin{remark}\label{rem:imaginary parameter}
By Proposition~\ref{prop:normformelliptic}, an elliptic symplectic form with imaginary parameter $\omega$ is locally isomorphic to
\begin{equation}\label{eq:imaginary parameter}
	\lambda\Im^*(id\log z_1 \wedge d\log z_2),
\end{equation}
for  an appropriate choice of complex coordinates, where $\lambda = \Res_{r_1 r_2}(\omega)$. This justifies the terminology. It is also immediate that these complex coordinates are compatible with the orientation defined by the complex structure. 

Notice however that if the divisor is co-oriented, the complex coordinates above may not be compatible with the co-orientations. If we require compatibility between co-orientation and complex coordinates, $\omega$ will be isomorphic to either the form above or
\begin{equation*}
	\lambda\Im^*(id\log z_1 \wedge d\log \bar z_2).
\end{equation*}
In the latter case, the complex coordinates and symplectic structure induce opposite orientations.
\end{remark}
We can now turn to the main result of this section, namely the connected sum operation in dimension four for elliptic symplectic structures with imaginary parameter at points in $D(2)$ whose imaginary parameters match in absolute value.
\begin{theorem}\label{thm:ellipticglueing}
Let $(M,I_{\abs{D_M}})$ and $(N,I_{\abs{D_N}})$ be four-manifolds with elliptic divisors. Let $\omega_1,\omega_2$ be locally complex elliptic symplectic forms with imaginary parameters at $p \in D_M(2)$ and $q\in D_N(2)$, and denote by $D'_M$ and $D'_N$ the connected components of the divisor containing $p$ and $q$. If $\abs{\Res_{r_1r_2}\omega_1(p)} = \abs{\Res_{r_1r_2}\omega_2(q)}$, then:
\begin{itemize}
\item The connected sum $M \#_{p,q} N$ admits a locally complex elliptic symplectic form, $\omega$, for which the inclusions  $(M \backslash \{p\},\omega_1), (N \backslash \{q\},\omega_2)  \hookrightarrow (M \#_{p,q} N,\omega)$ are elliptic symplectic maps.
\item If either $D'_M(2)$ or $D'_N(2)$ has more than one point, the parity of $D'_{M\# N}$ is given by
\[ \varepsilon_{D'_{M\# N}} = - \varepsilon_{D'_M} \varepsilon_{D'_N}.\]
\item If $D'_M(2) =\{p\}$ and $D_N(2) = \{q\}$, then $D'_{M\#N}$ is co-orientable if and only if $p$ and $q$ have opposite parities.
\end{itemize}
\end{theorem}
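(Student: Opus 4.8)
The plan is to establish the three claims in turn, constructing the glued symplectic form first and then carrying out the orientation bookkeeping for the parity and co-orientability statements. Write $\omega_0 := \Im^*(i\,d\log z_1 \wedge d\log z_2)$ for the local model of Remark~\ref{rem:imaginary parameter} and set $\lambda_M := \Res_{r_1r_2}\omega_1(p)$, $\lambda_N := \Res_{r_1r_2}\omega_2(q)$. For the first bullet, since $p,q$ are imaginary-parameter points, Proposition~\ref{prop:normformelliptic} and Remark~\ref{rem:imaginary parameter} provide complex coordinates near $p$ and near $q$ in which $\omega_1 = \lambda_M\omega_0$ and $\omega_2 = \lambda_N\omega_0$ on a punctured ball. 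The inversion $F$ of \eqref{eq:inversion} glues the two divisors by Lemma~\ref{lem:gluediv}, and by Lemma~\ref{lem:glue} it satisfies $F^*\omega_0 = -\omega_0$, so $F^*\omega_2 = -\lambda_N\omega_0$. The hypothesis $\abs{\lambda_M} = \abs{\lambda_N}$ gives $\lambda_M = \pm\lambda_N$, and interchanging the two coordinate factors on the $N$-side — the ordering freedom recorded in Remark~\ref{rem:multipledivs} — sends $\omega_0 \mapsto -\omega_0$ and thus flips the sign of $\lambda_N$; hence I may assume $\lambda_M = -\lambda_N$, so that $F^*\omega_2 = \omega_1$ on the overlap. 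The two forms then patch to a closed, non-degenerate $\omega\in\Omega^2(\elli)$ on $M\#_{p,q}N$ which is locally complex because $F$ preserves the local complex log data, and the inclusions are elliptic symplectomorphisms by construction; if the normal forms hold only to leading order one interpolates on the neck with the elliptic Moser Lemma~\ref{lem:elliMoser}, as in Lemma~\ref{lem:surglemma}.

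For the parity formula I would first describe the surgery on the divisor: by Proposition~\ref{prop:topologydiv} each of $D'_M,D'_N$ is a self-crossing sphere or a necklace of spheres, and removing balls about $p$ and $q$ cuts both necklaces open at these crossings and splices them into a single necklace, destroying the crossing points $p$ and $q$; thus $D'_{M\#N}(2) = (D'_M(2)\setminus\{p\}) \sqcup (D'_N(2)\setminus\{q\})$. The heart of the matter is that, at an imaginary-parameter point, the index of Definition~\ref{def:checkmarktimes} records exactly whether the co-orientation $\frak{o}$ makes the local model the holomorphic form $\lambda\omega_0$ (index $+1$) or its conjugate $\lambda\Im^*(i\,d\log z_1 \wedge d\log \bar z_2)$ (index $-1$) of Remark~\ref{rem:imaginary parameter} — equivalently whether the $\frak{o}$-compatible complex orientation agrees with the $\omega$-orientation. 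Now $F(z_1,z_2) = r^{-2}(z_1,\bar z_2)$ conjugates only the second factor, so it reverses the co-orientation of exactly one of the two strands passing through the neck while preserving the ambient orientation, consistently with $F^*\omega = -\omega$. Choosing a co-orientation of $D'_{M\#N}$ restricting to the given ones off the neck, the surviving crossings contribute $\epsilon_p\varepsilon_{D'_M}$ from $M$ and $\epsilon_q\varepsilon_{D'_N}$ from $N$, and the single strand-flip at the neck contributes one further sign; tracking these around the spliced necklace yields $\varepsilon_{D'_{M\#N}} = -\varepsilon_{D'_M}\varepsilon_{D'_N}$.

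The third bullet is the boundary case of this count. When $D'_M(2)=\{p\}$ and $D_N(2)=\{q\}$ both components are single self-crossing spheres, so $\varepsilon_{D'_M}=\epsilon_p$ and $\varepsilon_{D'_N}=\epsilon_q$, and the splice of the two cylinders $D'_M\setminus\{p\}$, $D'_N\setminus\{q\}$ along their two boundary circles is a smooth torus with $D'_{M\#N}(2)=\emptyset$. Here co-orientability is no longer automatic: I would propagate the complex co-orientations of the two cylinders across both neck circles and ask whether they close up, and the same strand-flip analysis shows that they do precisely when the sign $-\epsilon_p\epsilon_q$ that the previous paragraph attaches to the (now empty) crossing product equals $+1$, i.e.\ when $\epsilon_p\epsilon_q = -1$; since $\varepsilon_{D'_M}=\epsilon_p$ and $\varepsilon_{D'_N}=\epsilon_q$ this is exactly the opposite-parity condition. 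I expect the whole difficulty to sit in this orientation bookkeeping: $F$ must be globally orientation-preserving so that $M\#_{p,q}N$ is coherently oriented and $F^*\omega=-\omega$, yet it reverses the co-orientation of the degeneracy locus, and keeping these two conventions aligned while reading off the index of each surviving crossing — and the monodromy of the co-orientation around the spliced necklace — is where the argument has to be made carefully.
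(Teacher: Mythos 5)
Your first bullet is essentially the paper's own argument (exact normal form from Proposition~\ref{prop:normformelliptic} and Remark~\ref{rem:imaginary parameter}, reversal of the coordinate order on one side to arrange $\lambda_M=-\lambda_N$, then $F^*\omega_0=-\omega_0$ from Lemma~\ref{lem:glue}); the Moser interpolation you offer as a fallback is unnecessary, since the normal form holds exactly on a neighbourhood, not just to leading order. The genuine gap is in your parity computation. You simultaneously (i) choose a co-orientation of $D'_{M\# N}$ ``restricting to the given ones off the neck'' and (ii) insert ``one further sign'' for the strand that $F$ conjugates; these two steps are incompatible. If a co-orientation of $D'_{M\# N}$ extending the given ones exists, then the parity is exactly the product of the surviving indices --- the surgery does not touch the data computing them --- namely $(\epsilon_p\varepsilon_{D'_M})(\epsilon_q\varepsilon_{D'_N})=\epsilon_p\epsilon_q\,\varepsilon_{D'_M}\varepsilon_{D'_N}$, with no extra sign. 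Moreover, such an extension exists precisely when (after the allowed coordinate changes) $\epsilon_p\epsilon_q=-1$, because the conjugation of one strand by $F$ must be compensated by opposite indices at $p$ and $q$; this is exactly the matching condition your own third bullet relies on. Your uniform count gives $-\epsilon_p\epsilon_q\,\varepsilon_{D'_M}\varepsilon_{D'_N}$, which differs from the correct value by a sign exactly when $\epsilon_p\epsilon_q=-1$, i.e.\ exactly in the case where the extension you start from does exist. The stated formula comes out only because two errors cancel, so the argument is not correct as written.

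What is missing is the normalization the paper performs first, and it is also where the hypothesis you never invoke enters. Since parity is independent of the choice of co-orientation (Lemma~\ref{lem:paritycomplexlog}), and since one of $D'_M(2)$, $D'_N(2)$ contains a second point, the two strands through $p$ (say) lie on distinct spheres of the necklace, so flipping the co-orientation of one of them changes $\epsilon_p$ while leaving $\varepsilon_{D'_M}$ unchanged; hence one may assume $\epsilon_p=-1$ and $\epsilon_q=+1$ at the outset. With that normalization the given co-orientations genuinely glue under $F$ (the strand whose conormal $F$ preserves carries matching co-orientations, and the strand $F$ conjugates carries the single mismatch at $p$, which the conjugation repairs), and the parity is the plain product of surviving indices, $\epsilon_p\epsilon_q\,\varepsilon_{D'_M}\varepsilon_{D'_N}=-\varepsilon_{D'_M}\varepsilon_{D'_N}$. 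This normalizing flip is unavailable precisely when $D'_M(2)=\{p\}$ and $D'_N(2)=\{q\}$, since flipping the single self-crossed sphere reverses both strands at once; that is why the third bullet is a separate co-orientability statement. Your conclusion there (co-orientable if and only if $\epsilon_p\epsilon_q=-1$) is correct, but it should be derived directly from the strand-matching condition for $F$, not from the second bullet's sign count.
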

\begin{proof}  We will prove the claims of the theorem in turn. The tools needed are the normal form for locally complex elliptic symplectic structures with imaginary parameter from Remark~\ref{rem:imaginary parameter} and the local symplectomorphisms from Lemma~\ref{lem:glue}.

To prove the first claim we choose complex coordinates in a neighbourhood of $p$ which render the symplectic structure in the form \eqref{eq:imaginary parameter} and do the same for $q$, but reverse their order so that $\Res_{r_1r_2}\omega_1(p) = -\Res_{r_1r_2}\omega_2(q)$. 

For this choice of coordinates, if we use $F$ to perform the connected sum, Lemma \ref{lem:glue} implies that the structures on $M\backslash\{p\}$ and $N\backslash\{q\}$ agree on their overlap on  $M\#_{p,q} N$ which therefore inherits an elliptic symplectic structure.

Now we move to the second claim. Choose co-orientations for $D'_M$ and $D'_N$. If  $p$ and $q$ have the same index and, say, $D'_M(2)$ has more than one point, we can change the choice of co-orientation of one of the components arriving at $p$ which causes the index of $p$ to change sign. So we may assume without loss of generality that the indices of $p$ and $q$ are opposite. We assume that $p$ has negative index and $q$ has positive index as the other case is analogous.

In this case, the complex coordinates used in a neighbourhood of $p$ in the first claim only give the correct co-orientation of one of the components of $D'_M$ passing through $p$, say, the one given by $[z_2=0]$. That is the co-orientation of $[z_1=0]$ is determined by $d\bar{z_2}$ and the co-orientation of $[z_2=0]$ by $dz_1$. Since $q$ has positive index, we may assume that the complex coordinates chosen for $q$ are compatible with co-orientations. Finally we observe that the map $F$ from Lemma~\ref{lem:glue} sends the line $dz_1$ over $[z_2=0]$ to itself and sends the line $dz_2$ over $[z_1=0]$ to $d\bar{z_2}$. Therefore, the co-orientations of $D'_M$ and $D'_N$ are mapped to each other under $F$ and hence $D'_{M\# N}$ inherits a natural co-orientation from those of $D'_M$ and $D'_N$ and we can compute its index:
\[
\varepsilon_{D'_{M\#N}} = \Pi_{r\in D'_M(2), r\neq p} \epsilon_r  \Pi_{s\in D'_N(2), s\neq q} \epsilon_s = - \epsilon_p\epsilon_q \Pi_{r\in D'_M(2), r\neq p} \epsilon_r  \Pi_{s\in D'_N(2), s\neq q} \epsilon_s =  -\varepsilon_{D'_M}\varepsilon_{D'_N}.
\]
Finally we prove the last claim. If the indices of $p$ and $q$ are opposite, the previous argument shows that a choice of co-orientation for $D'_M(2)$ and $D'_N(2)$ induces a co-orientation for $D'_{M\# N}$, which is therefore co-orientable. If the indices of $p$ and $q$ agree, then the argument above shows that map $F$ matches co-orientations of one of the strands arriving at $p$ and $q$ and reverses the other. Since $D'_M$ and $D'_N$ are both connected, this implies that $D'_{M\# N}$ is not co-orientable.
\end{proof} 
Instead of performing connected sums of two manifolds we can also perform a connected sum of a manifold with itself (self-connected sum), that is we can glue neighbourhoods of points $p$ and $q \in M$ by an inversion on the annulus. In this case, if $M$ is connected, this operation corresponds to attaching a $1$-handle and hence the diffeomorphism type of the resulting space is always $M \# (S^1 \times S^3)$. In this context, Theorem~\ref{thm:ellipticglueing} becomes:
\begin{corollary}\label{prop:handleattachments}
Let $(M^4,I_{\abs{D}})$ be a four-manifold with an elliptic divisor and let $\omega$ be a locally complex elliptic symplectic form with imaginary parameters at $\{p,q\} \in D(2)$ with $p\neq q$. Denote by $D'_p$ and $D'_q$ the connected components of the divisor containing $p$ and $q$, respectively. If $\abs{\Res_{r_1r_2}\omega(p)} = \abs{\Res_{r_1r_2}\omega(q)}$, then:
\begin{itemize}
\item $M^4 \# (S^1\times S^3)$ admits a locally complex elliptic symplectic structure for which the inclusion $M^4 \backslash \{p,q\} \hookrightarrow (M^4 \# (S^1\times S^3))$ is an elliptic symplectic map.
\item If  $\{p,q\} \subsetneq D'_p(2) \cup D'_q(2)$  and $D'_p \neq D'_q$, the parity of corresponding divisor, $D'$,  in $M^4 \# (S^1\times S^3)$ is given by
\[ \varepsilon_{D'} = - \varepsilon_{D'_p} \varepsilon_{D'_q}.\]
\item If  $\{p,q\} \subsetneq D'_p(2) \cup D'_q(2)$  and $D'_p = D'_q$, the parity of corresponding divisor, $D'$  in $M^4 \# (S^1\times S^3)$ is given by
\[ \varepsilon_{D'} = - \varepsilon_{D'_p}.\]
\item If $\{p,q\} =  D'_p(2) \cup D'_q(2)$ then $D'$ is co-orientable if and only if $p$ and $q$ have opposite parities.
\end{itemize}
\end{corollary}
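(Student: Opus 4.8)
The plan is to mirror the proof of Theorem~\ref{thm:ellipticglueing}, treating the self-connected sum as the degenerate case $N = M$ in which the two glueing points $p,q$ happen to lie in the same manifold. For the first bullet I would invoke Remark~\ref{rem:imaginary parameter} to put $\omega$ into the normal form $\lambda\,\Im^*(i\,d\log z_1 \wedge d\log z_2)$ near both $p$ and $q$, with $\lambda = \Res_{r_1r_2}\omega(p)$ and $\lambda = \Res_{r_1r_2}\omega(q)$ respectively. The hypothesis $\abs{\Res_{r_1r_2}\omega(p)} = \abs{\Res_{r_1r_2}\omega(q)}$ lets me rescale and, after possibly swapping the coordinate order at $q$, arrange $\Res_{r_1r_2}\omega(p) = -\Res_{r_1r_2}\omega(q)$. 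I would then glue the two deleted balls by the inversion $F$ of \eqref{eq:inversion}: Lemma~\ref{lem:gluediv} guarantees $F^*I_{\abs{D}} = I_{\abs{D}}$, while Lemma~\ref{lem:glue} gives $F^*\omega = -\omega$, so that the two copies of the normal form match across the glueing annulus and assemble into a global locally complex elliptic symplectic form. Since $M$ is connected and $p \neq q$, removing two balls and identifying their boundary spheres is exactly the attachment of a $1$-handle, so the underlying manifold is $M \# (S^1 \times S^3)$ and the inclusion of $M \setminus \{p,q\}$ is tautologically an elliptic symplectic map.

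For the parity statements I would reuse the index bookkeeping from Theorem~\ref{thm:ellipticglueing}, recalling that $F$ preserves the co-orientation of one strand arriving at a glueing point and reverses the other. When $D'_p \neq D'_q$ (second bullet) the two components are distinct, so their co-orientations may be chosen independently; using the extra intersection points guaranteed by $\{p,q\} \subsetneq D'_p(2) \cup D'_q(2)$ I can flip co-orientations to force $\epsilon_p$ and $\epsilon_q$ to be opposite, making $F$ co-orientation-compatible, and the computation of Theorem~\ref{thm:ellipticglueing} yields $\varepsilon_{D'} = -\varepsilon_{D'_p}\varepsilon_{D'_q}$ verbatim. The genuinely new case is $D'_p = D'_q =: D'_0$ (third bullet). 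Here I would first observe, using that flipping the co-orientation of one sphere in the necklace of Proposition~\ref{prop:topologydiv} toggles the indices at its two endpoints, that the indices at two points of the same component can only be toggled in pairs; hence whether $\epsilon_p = \epsilon_q$ is a co-orientation-independent fact and cannot in general be fixed to the opposite case. Removing $p,q$ leaves $D'(2) = D'_0(2) \setminus \{p,q\}$, so that $\varepsilon_{D'}$ equals $\epsilon_p\epsilon_q\,\varepsilon_{D'_0}$ multiplied by the sign that $F$ contributes through its strand reversal. The point is that this $F$-sign is $+1$ exactly when $\epsilon_p = -\epsilon_q$, so the product $(F\text{-sign})\cdot\epsilon_p\epsilon_q$ equals $-1$ in both sub-cases, giving $\varepsilon_{D'} = -\varepsilon_{D'_0} = -\varepsilon_{D'_p}$ as claimed.

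Finally, the fourth bullet is the smooth case $\{p,q\} = D'_p(2) \cup D'_q(2)$, where the glued divisor $D'$ has no remaining self-intersections and co-orientability is the only question. I would argue exactly as in the last paragraph of the proof of Theorem~\ref{thm:ellipticglueing}: if $\epsilon_p$ and $\epsilon_q$ are opposite then $F$ matches the local co-orientations at $p$ and $q$ and the chosen co-orientations of $D'_p, D'_q$ extend across the glueing, so $D'$ is co-orientable; if they agree then $F$ matches one strand and reverses the other, and connectedness of $D'$ then forces non-co-orientability.

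I expect the main obstacle to be the sign accounting in the third bullet, namely correctly tracking how the strand reversal built into $F$ (together with the choice of whether to flip $\phi_n$, as in Remark~\ref{rem:orientation?}) interacts with the co-orientation transported around the loop created by the self-glueing, precisely in the sub-case where $p$ and $q$ cannot be given opposite indices. This is the one place where the self-connected sum departs from the two-manifold argument of Theorem~\ref{thm:ellipticglueing}, and it is where I would concentrate the care, verifying rigorously that the $F$-contribution and the factor $\epsilon_p\epsilon_q$ always combine to $-1$.
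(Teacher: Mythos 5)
Your overall strategy --- specializing the proof of Theorem~\ref{thm:ellipticglueing} to the self-gluing situation --- is exactly how the paper treats this statement (it gives no separate proof; the corollary is presented as Theorem~\ref{thm:ellipticglueing} applied with $N = M$), and your treatment of the first, second and fourth bullets is sound. The problem is the third bullet, where your key claim is false. You assert that because co-orientation flips toggle indices in pairs, the relative sign of $\epsilon_p$ and $\epsilon_q$ is a co-orientation-independent quantity, so that the case $\epsilon_p = \epsilon_q$ cannot be avoided. That invariance holds only when $D'_p(2) \cup D'_q(2) = \{p,q\}$, i.e.\ for a necklace of two spheres, which is the situation of the \emph{fourth} bullet. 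Under the hypothesis of the third bullet, $\{p,q\} \subsetneq D'_0(2)$ where $D'_0 := D'_p = D'_q$, there is a third intersection point $c$, and by Proposition~\ref{prop:topologydiv} the component $D'_0$ is a necklace with at least three spheres. Flipping the co-orientations of all spheres along the arc of the necklace from $p$ to $c$ that avoids $q$ toggles the index exactly at $p$ and at $c$ (interior vertices of the arc are toggled twice), leaving $\epsilon_q$ untouched. So one can always arrange $\epsilon_p = -\epsilon_q$ in this case --- the opposite of what you claim.

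This false premise is what forces you into the ``$F$-sign'' bookkeeping, which is not a proof: when $\epsilon_p = \epsilon_q$ the map $F$ matches the co-orientation of one strand and reverses the other, so the chosen co-orientations do not descend to the glued divisor at all, and the sign you propose to multiply by $\epsilon_p\epsilon_q$ is undefined rather than provably $-1$. The repair is simpler than your workaround: use the toggle above to reduce to $\epsilon_p = -\epsilon_q$, exactly as in the second bullet and in the proof of the second claim of Theorem~\ref{thm:ellipticglueing}; then $F$ matches co-orientations, the glued component inherits one, and
\[
\varepsilon_{D'} \;=\; \prod_{r \in D'_0(2)\setminus\{p,q\}} \epsilon_r \;=\; \epsilon_p\,\epsilon_q\,\varepsilon_{D'_0} \;=\; -\,\varepsilon_{D'_0},
\]
which is the stated formula. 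Your instinct that the relative sign of $\epsilon_p$ and $\epsilon_q$ is rigid is correct precisely in the fourth bullet, where it is what makes ``opposite parities'' a well-posed condition; your argument there, mirroring the last paragraph of the proof of Theorem~\ref{thm:ellipticglueing}, is fine.
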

\begin{remark}
It might be more desirable to arrive at the conclusion of this corollary by producing a stable generalized complex structure on $S^1\times S^3$ for which $D(2) \neq \emptyset$ and then using Theorem~\ref{thm:ellipticglueing} to perform the connected sum. Unfortunately, presently we do not know if $S^1 \times S^3$ has such a structure. 
\end{remark}

\section{Examples}\label{sec:examples}
In this section we will use the connected sum procedure of Theorem~\ref{thm:ellipticglueing} to create several examples of elliptic symplectic structures and stable generalized complex structures. In order to do so we will start by constructing several building blocks, which are then combined via connect sum.

\subsection{Simple examples}
As we showed in Example \ref{th:CP2n}, $\cc P^2$ admits a stable generalized complex structure whose divisor is given by three lines. The next few examples show that $S^2 \times S^2$ has such a structure as well, while $\bar{\cc P^2}$ and $S^4$ do not, but do have locally complex elliptic symplectic structures.

\begin{example}[$\overline{\cc P^2}$]\label{prop:barcp2}
In this example we construct an elliptic symplectic form with imaginary parameter on $\bar{\cc P^2}$ for the elliptic divisor induced by $(\mathcal{O}(3),z_0z_1z_2)$. Needless to say, the structure we construct is not the imaginary part of a complex log symplectic form, since $\overline{\cc P^2}$ does not admit generalized complex structures.

Let $\mathcal{O}(1) \rightarrow \cc P^2$ be the dual of the tautological line bundle and for $i=0,1,2$ let $z_i \in \Gamma(\mathcal{O}(1))$ be the section induced by the homogeneous polynomial $z_i$ on $\cc^3$. Consider the three smooth complex log divisors, $D_i = (\mathcal{O}(1),z_i)$, let $D = (\mathcal{O}(3),z_0z_1z_2)$ be their product and let $|D|$ be the corresponding elliptic divisor. Using the underlying affine coordinates,
\begin{equation*}
	u_1 = \frac{z_1}{z_0}, \qquad u_2 = \frac{z_2}{z_0}, \qquad v_0 = \frac{z_0}{z_1}, \qquad v_2 = \frac{z_2}{z_1}, \qquad w_0 = \frac{z_0}{z_2}, \qquad w_1 = \frac{z_1}{z_2},
\end{equation*}
we define the following global elliptic two-form 
\begin{align*}
\omega := 
\begin{cases}
\Im^*(id\log \bar{u_1} \wedge d\log u_2) \quad &\text{if } z_0 \neq 0,\\
\Im^*(-id\log \bar{v_0} \wedge d\log v_2) \quad &\text{if } z_1 \neq 0,\\
\Im^*(-id\log \bar{w_1} \wedge d\log w_0) \quad &\text{if } z_2 \neq 0.\
\end{cases}
\end{align*}
It is immediate from the expression above that $\omega$ is locally complex with imaginary parameter. Moreover we see that it induces the orientation opposite from the usual complex structure on $\cc P^2$, hence it is a locally complex elliptic symplectic structure with imaginary parameter for every point in $D(2)$  on $\overline{\cc P^2}$. 

Finally, if we consider the co-orientation of the elliptic divisors induced by the complex log divisor in Example \ref{ex:complexlogcpn} we have that the points $D_0 \cap D_2$ and $D_0 \cap D_1$ have positive index, while the point in $D_1 \cap D_2$ has negative index. By Lemma \ref{lem:paritycomplexlog} we conclude that $\omega$ cannot be the imaginary part of a complex log symplectic form. Further, we observe that if we were to choose the opposite co-orientation for $D_0$, all intersection indices would be $-1$.

We can provide a simple picture to illustrate this and all the other examples in this section. Recall that $\cc P^2$ admits a singular torus fibration whose fibers are the orbits of the standard torus action on $\cc P^2$. The quotient space $\cc P^2/T^2$ is a triangle and the elliptic symplectic structure constructed above is invariant under this action (with symplectic fibers). The zero locus of the divisor is the pre-image of the edges of the triangle and points in $D(2)$ are the pre-images of the vertices. With this in mind, we use a triangle to represent $\cc P^2$ (or $\overline{\cc P^2}$) and decorate each vertex of the triangle with the intersection index of the corresponding point in $D(2)$ (see Figure \ref{fig:cp2bar}).\qedhere

\begin{figure}[h!]
\begin{overpic}[unit=1mm,scale=0.5]{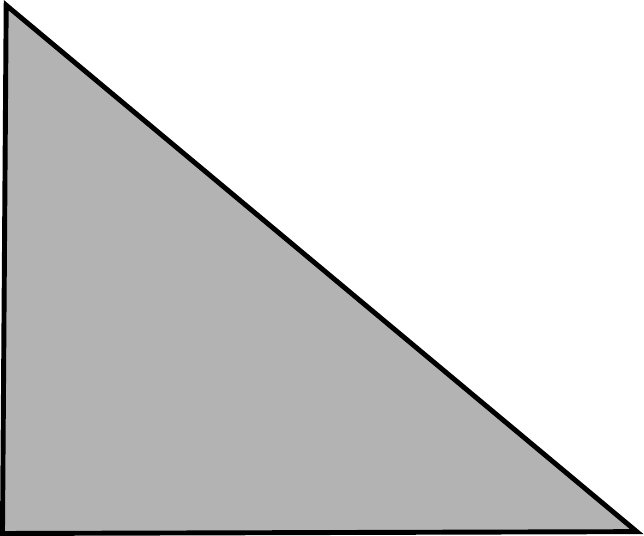}
\put(-6,-3){\color{ao(english)}$+1$}
\put(-6,28){\color{ao(english)}$+1$}
\put(33,-3){\color{red}$-1$}
\end{overpic}
\caption{We visualise $\bar{\mathbb{C}P^2}$ as its image under the moment map and each vertex in the triangle corresponds to a point in $D(2)$. We label the vertices with $\pm 1$ according to the intersection index of the corresponding point in $\overline{\cc P^2}$.}\label{fig:cp2bar}
\end{figure}
\end{example}

\begin{example}[$S^2 \times S^2$]\label{prop:S2xS2}
The manifold $S^2\times S^2$ admits a complex log symplectic structure $\sigma$ for which $D(2)$ consists of four points. The imaginary part of $\sigma$ is an elliptic symplectic form with imaginary parameter. Indeed, identifying $S^2$ with the extended complex plane, the vector field $z \partial_z$ vanishes transversely at $0$ and $\infty$ and hence in $S^2\times S^2$ (with complex coordinates $z$ and $w$), the bivector field $\pi = -izw \partial_z \partial_w$ is Poisson and determines a complex log divisor. Therefore we can use $\pi$ to deform the complex structure of $S^2 \times S^2$ into a stable generalized complex structure (as in Example \ref{exa:holpoissonsgcs}). A direct check shows that this structure has imaginary parameter at all points in $D(2)$. Since this stable generalized complex structure is obtained from a holomorphic Poisson structure, the natural co-orientation of each component of the divisor (induced by the complex structure) makes all intersection indices positive. Yet, by changing co-orientations, we can arrange that any pair or all four points in $D(2)$ have negative index.

Just as for $\overline{\cc P^2}$, we can provide an illustration for this structure using the toric description of $S^2 \times S^2$ (see Figure \ref{fig:S2xS2}).
\end{example}
\begin{figure}[h!]
\begin{overpic}[unit=1mm,scale=0.5]{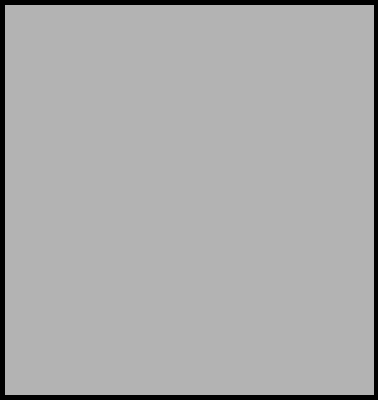}
\put(-6,-1){\color{ao(english)}$+1$}
\put(-6,19){\color{ao(english)}$+1$}
\put(20,-1){\color{ao(english)}$+1$}
\put(20,19){\color{ao(english)}$+1$}
\end{overpic}
\caption{We visualise $S^2\times S^2$ as the image under the map given by the two height functions. We label the vertices with $\pm 1$ according to the intersection index of the corresponding points in $S^2\times S^2$. Different choices of co-orientations yield different sign combinations at the vertices.}\label{fig:S2xS2}
\end{figure}
\begin{example}[$S^4$]\label{prop:S4}
The manifold $S^4$ admits an elliptic symplectic form with imaginary parameter with divisor consisting of two copies of $S^2$ intersecting each other at the north and south pole.

Consider two copies of $D^4$ and endow one copy with the two-form $\omega := \Im^*(id\log z_1 \wedge d\log z_2)$ and the other copy with $-\omega$. Using the map $F$, as in Lemma \ref{lem:glue}, we can glue an annulus in one of the disks to an annulus in the other disk while preserving the elliptic symplectic structures. The resulting manifold is diffeomorphic to $S^4$ and the divisors intersect at the points $(z_1,z_2) = (0,0)$ in both copies of $D^4$, which correspond to the north and south pole of the sphere. Because $F$ involves a complex conjugation, a choice of co-orientations for which one point in $D(2)$ has positive index causes the other point to have negative index. As in the previous examples, $S^4$ admits a natural torus action which rotates each complex coordinate in $D^4$ for which the hyperplanes $[z_i=0]$ have $S^1$ isotropy and the north and south poles are fixed points. This allows us to produce a two-dimensional   illustration of this structure (see Figure \ref{fig:S4}).
\end{example}
\begin{figure}[h!]
\begin{overpic}[unit=1mm,scale=0.5]{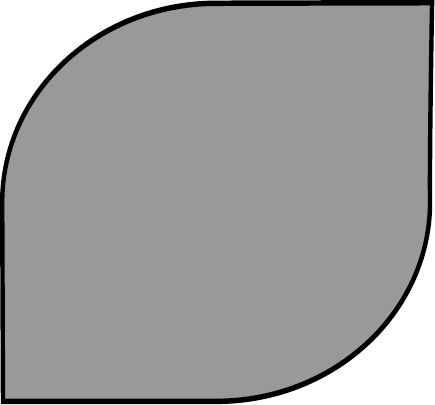}
\put(-6,-1){\color{ao(english)}$+1$}
\put(23,19){\color{red} $-1$}
\end{overpic}
\caption{We visualise $S^4$ as its quotient by the standard torus action: the edges correspond to the hyperplanes $[z_i=0]$ and the corners to the north and south poles.}\label{fig:S4}
\end{figure}
\begin{example}\label{ex:kleinsphere}
Because $S^4$ admits an elliptic symplectic form with imaginary parameter, by Example~\ref{prop:S4}, we can use Theorem \ref{th:smoothing} to obtain a smooth elliptic symplectic structure on $S^4$. However the degeneracy locus is non co-orientable, because the smoothing process did not preserve co-orientations. Because $S^4$ is orientable, we conclude that the degeneracy locus has to be non-orientable. Since the modular vector field is tangent to the degeneracy locus and nowhere zero, we see that the degeneracy locus is diffeomorphic to a Klein bottle.
\end{example}
\subsection{Main class of examples}
In this section we will combine all of our four-dimensional results to create new examples of elliptic symplectic and stable generalized complex structures on a large class of four-manifolds exhibited as connected sums.

\begin{theorem}\label{th:examples}
The manifolds in the following two families admit stable generalized complex structures:
\begin{enumerate}
\item $X_{n,\ell}: = \# n (S^2\times S^2)\# \ell (S^1\times S^3)$, with  $n,\ell \in \nn$;
\item $\hat X_{n,m,\ell}:=\# n \mathbb{C} P^2 \#m \overline{\mathbb{C} P^2}\# \ell(S^1 \times S^3)$, with $n,m\ell \in \nn$,
\end{enumerate}
as long as $1 - b_1 + b_2^+$ is even and the Euler characteristic is non-negative.
\end{theorem}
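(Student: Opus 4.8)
The plan is to realise every manifold in both families as an iterated connected sum of the four elliptic-symplectic building blocks of Examples~\ref{th:CP2n} ($\cc P^2$), \ref{prop:barcp2} ($\overline{\cc P^2}$), \ref{prop:S2xS2} ($S^2\times S^2$) and \ref{prop:S4} ($S^4$), to glue them using the connected sum of Theorem~\ref{thm:ellipticglueing} and the self-connected sum of Corollary~\ref{prop:handleattachments}, and finally to promote the resulting locally complex elliptic symplectic form to a genuine stable generalized complex structure via Lemma~\ref{lem:paritycomplexlog}. Each block carries a torus-invariant locally complex elliptic symplectic form with imaginary parameter at every point of $D(2)$; by the symmetry of these toric models all of these parameters have equal modulus on a given block, so after a global rescaling $\omega\mapsto c\omega$ (which preserves closedness, nondegeneracy and the locally complex condition, and only rescales residues) I may assume $\abs{\Res_{r_1r_2}\omega}=1$ at every point of $D(2)$ on every block. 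Any two such points then satisfy the matching hypothesis of Theorem~\ref{thm:ellipticglueing}.

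The argument is then bookkeeping of two quantities: the number $\#D(2)$ of self-intersection points, and the parity $\varepsilon_{\omega,D'}$ of each component $D'$ of $D$. For each block one has $\#D(2)=\chi$ (namely $3,3,4,2$), while the parities are $+1$ for $\cc P^2$ and $S^2\times S^2$ and $-1$ for $\overline{\cc P^2}$ and $S^4$. A connected sum or a self-connected sum removes the two glued points, so $\#D(2)$ decreases by $2$ at each step, exactly as $\chi$ does under connected sum; hence the identity $\#D(2)=\chi$ is preserved throughout, and the final divisor has $\chi(X)$ self-intersections. For the parity, Theorem~\ref{thm:ellipticglueing} gives $\varepsilon_{D'_{M\#N}}=-\varepsilon_{D'_M}\varepsilon_{D'_N}$ when two components are fused and Corollary~\ref{prop:handleattachments} gives $\varepsilon_{D'}=-\varepsilon_{D'_p}$ for a handle attached inside one component; thus merging $k$ blocks into a single component multiplies their parities and inserts a factor $(-1)^{k-1}$, and each of the $\ell$ handle attachments contributes a further factor $-1$.

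Concretely, for $X_{n,\ell}$ with $n\geq 1$ I would chain $n$ copies of $S^2\times S^2$ and attach $\ell$ handles, obtaining total parity $(-1)^{n-1}(-1)^{\ell}=(-1)^{n+\ell-1}$; for $\hat X_{n,m,\ell}$ I would chain $n$ copies of $\cc P^2$ with $m$ copies of $\overline{\cc P^2}$ and attach $\ell$ handles, obtaining $(-1)^{n+m-1}(-1)^{m}(-1)^{\ell}=(-1)^{n+\ell-1}$. In both families $b_1=\ell$ and $b_2^+=n$, so $1-b_1+b_2^+$ is even exactly when $n+\ell$ is odd, which is exactly when the total parity equals $+1$; Lemma~\ref{lem:paritycomplexlog} then furnishes a co-orientation for which the form is induced by a stable generalized complex structure. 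Simultaneously $\#D(2)=\chi(X)\geq 0$ is precisely the requirement that there be enough self-intersection points to perform every surgery, since $\#D(2)$ decreases monotonically to $\chi(X)$ and one needs at least two points available before each attachment. The remaining case $n=m=0$, that is $S^1\times S^3$ (forced to $\ell=1$ by $\chi\geq 0$ and the parity condition), is produced by a single self-connected sum on $S^4$: its two points have opposite index, so the last item of Corollary~\ref{prop:handleattachments} gives a co-orientable, now smooth, divisor.

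The main obstacle I anticipate is the parity and co-orientation bookkeeping at the boundary of the construction. When a surgery consumes the last two points of a component, Corollary~\ref{prop:handleattachments} replaces the parity formula by a co-orientability criterion; one must check that these two regimes agree, so that ``total parity $+1$'' translates into ``every surviving component has parity $+1$'' when $D(2)\neq\emptyset$ and into ``the (smooth) divisor is co-orientable'' when $D(2)=\emptyset$, and that the available freedom to flip co-orientations can always be used to realise this rather than merely to fix a global count. One must also confirm that the imaginary-parameter normalisation survives the iteration: because each surgery is supported near the glued points, the surviving points of $D(2)$ retain their unit residues, so the matching hypotheses of Theorem~\ref{thm:ellipticglueing} and Corollary~\ref{prop:handleattachments} remain available at every step.
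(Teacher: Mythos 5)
Your proposal is correct and follows essentially the same route as the paper's proof: the same four building blocks (Examples~\ref{th:CP2n}, \ref{prop:barcp2}, \ref{prop:S2xS2}, \ref{prop:S4}) are glued via Theorem~\ref{thm:ellipticglueing} and Corollary~\ref{prop:handleattachments}, and the stable condition is recovered from the same parity count $(-1)^{n+\ell-1}$ via Lemma~\ref{lem:paritycomplexlog}. Your additional bookkeeping (the invariant $\#D(2)=\chi$, the residue normalisation, and the treatment of the smooth-divisor regime when the last two points are consumed at $\chi=0$) only makes explicit steps the paper leaves implicit, and all of it checks out since the divisor stays connected throughout the chaining.
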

Notice that if $1 - b_1 + b_2^+$ is odd for a four-manifold $M$, then $M$ does not admit any generalized complex structure as it is not even almost complex by \cite{HH58} or \cite[Theorem 1.4.13]{GS99}. The requirement that the Euler characteristic is positive, on the other hand, seems to be more of a limitation of our methods.
\begin{proof}
We will first prove that the manifolds in the list above admit elliptic symplectic structures with imaginary parameters  as long as the Euler characteristic is non-negative and then show that these elliptic symplectic structures come from a generalized complex one if $1 - b_1 + b_2^+$ is even.

In Examples~\ref{th:CP2n},~\ref{prop:barcp2} and~\ref{prop:S2xS2} we produced elliptic symplectic structures on $\cc P^2$, $\bar{\cc P^2}$ and $S^2\times S^2$ with $3$, $3$ and $4$ points in $D(2)$ respectively. These are all locally complex with imaginary parameter, so that by applying Theorem~\ref{thm:ellipticglueing} inductively we obtain elliptic symplectic structures on
 $X_{n,0}$ and $\hat X_{n,m,0}$ for all values of $n$, $m$, including the case $n = m =0$ by Example~\ref{prop:S4}. The number of points in $D(2)$ in these manifolds is, respectively, $n+m+2$ and $2n+ 2$. By Corollary~\ref{prop:handleattachments} we can self-connect sum these spaces up to $\floor{\frac{n+m+2}{2}}$  and $n+1$ times, respectively, to obtain elliptic symplectic structures with imaginary parameters on the spaces of the list with non-negative Euler characteristic.

To prove that the elliptic symplectic structures constructed above are induced by stable generalized complex structures, it suffices to show that the parity is $1$, by Lemma~\ref{lem:paritycomplexlog}. Due to Theorem~\ref{thm:ellipticglueing} and Corollary~\ref{prop:handleattachments}, the parity of the symplectic structure for both families is $(-1)^{n-1+\ell}$, which is positive if and only if $n-1 +\ell$ is even, that is, $1 - b_1 + b_2^+$ is even.
\end{proof}
\begin{remark}
Several of the manifolds in Theorem \ref{th:examples}, although not all, have already appeared before. The family $\hat X_{n,m,0}$ with $n > 0$ and $m\geq 0$ can be found in \cite{CG09}. The manifolds $X_{n,0}$, $X_{n,1}$ and $\hat X_{n,m,1})$ appeared in \cite{Tor12}. The examples with $\ell>1$ have not appeared before. The main advantage of our approach is that it is direct. We construct geometric structures on manifolds which are connected sums by showing that the connected sum operation is compatible with the structures in question. This is in contrast with those references, which instead construct manifolds with the desired structure via surgeries and then determine the resulting diffeomorphism type at a later stage.
\end{remark}
\begin{remark}\label{rem:not complex}
The manifolds in both families in Theorem \ref{th:examples} do not admit complex or symplectic structures if $n >1$. Indeed, for $n>1$, the manifolds are connected sums of manifolds with $b_2^+>0$ and by results from Seiberg--Witten theory due to Taubes (see \cite{T94}) no such manifold admits a symplectic structure.

To prove the non-existence of complex structures requires a slightly longer argument. For $\ell$ even, if the manifolds admitted complex structures they would be K\"ahler and hence also symplectic, but we ruled out this possibility already. The argument for $\ell$ odd comes from a paper by Belgun \cite{MR1760667} and goes as follows. It follows from the Kodaira classification of surfaces that if one of these manifolds were complex, call it $X$, then its Kodaira dimension would be $1$ and $X$ would be an elliptic surface, possibly with multiple fibers, $X \to B$. But in this case there is a finite cover, $\tilde X$ of $X$, corresponding to a branched cover $\tilde B$ of $B$ which is a genuine fibration: $\tilde X \to \tilde B$. By a result of Mehara \cite{MR578873}, any such $\tilde X$ is a quotient of $\cc^2$ by a discrete group. In particular, we conclude that $\tilde X$ and (hence also $X$) would be aspherical, which is not the case for our manifolds. We are thankful to Ornea and Vuletescu for pointing us towards this argument.
\end{remark}

\providecommand{\bysame}{\leavevmode\hbox to3em{\hrulefill}\thinspace}
\providecommand{\MR}{\relax\ifhmode\unskip\space\fi MR }
\providecommand{\MRhref}[2]{%
  \href{http://www.ams.org/mathscinet-getitem?mr=#1}{#2}
}
\providecommand{\href}[2]{#2}

\end{document}